\author{Andrew Krause}
\title{Asymptotic Dynamics of Stochastic $p$-Laplace\\
  Equations on Unbounded Domains}
\newtheorem{theorem}{Theorem}[section]
\newtheorem{proposition}[theorem]{Proposition}
\newtheorem{lemma}[theorem]{Lemma}
\theoremstyle{definition}
\newtheorem{definition}[theorem]{Definition}
\newcounter{labelflag} \setcounter{labelflag}{0}
\newcommand{\Label}[1]{
                       \ifnum\thelabelflag=1
                          \ifmmode
                             \makebox[0in][l]{\qquad\fbox{\rm#1}}
                          \else
                             \marginpar{\vspace{0.7\baselineskip}
                                        \hspace{-1.1\textwidth}
                                        \fbox{\rm#1}}
                          \fi
                       \fi
                       \label{#1} }
\newcommand{\be}{\begin{equation}}
\newcommand{\ee}{\end{equation}}
\newcommand{\abs}[1]{|#1|}
\newcommand{\norm}[1]{\|#1\|}
\newcommand{\R}{\mathbb{R}}
\newcommand{\N}{\mathbb{N}}
\def \calf {{  {\mathcal{F}} }}
\def \cala {{  {\mathcal{A}}_\alpha  }}
\def \calb {{  {\mathcal{B}}  }}
\def \cald {{  {\mathcal{D}}_\alpha }}
\def \zto {{  {z(\theta_t \omega)} }}
\def \zro {{ {z(\theta_r \omega)dr} }}
\def \pxk {{ {\rho(\frac{|x|^2}{k^2})} }}
\def \inp {{ {\int_{\mathbb{R}^n} \pxk} }}
 \def \o {{  \mathcal{O}  }}
 \def  \ltwo { {L^2 ({\R^n}) } }
  \def  \lq { { L^q  ({\R^n}) } }
   \def \ii {\int_{{\R^n}}}
 \def \eps{ { \varepsilon }}
\begin{document}
%
%
%
%
\titlepage
%
%
\begin{abstract}
This thesis is concerned with the asymptotic behavior of solutions of stochastic $p$-Laplace equations driven by non-autonomous forcing on $\R^n$. Two cases are studied, with additive and multiplicative noise respectively. Estimates on the tails of solutions  are used to overcome the non-compactness of Sobolev embeddings on unbounded domains, and prove asymptotic compactness of solution operators in $\ltwo$. Using this result we prove the existence and uniqueness of random attractors in each case. Additionally, we show the upper semicontinuity of the attractor for the multiplicative noise case as the intensity of the noise approaches zero.

\keywords{Random Attractors; Upper Semicontinuity; $p$-Laplace Equation}
\end{abstract}
\begin{acknowledgments}
I am indebted to the entire faculty of the New Mexico Tech for their teaching and support during my time there. I would like to especially thank my committee members, Dr. Ivan Avramidi and Dr. Bill Stone for countless hours of their time that I used to grow as a student and a person. I would also like to thank my wife, Sarah Krause, and my family for keeping my head from floating too high into the clouds these last few years. Finally, I would like to thank my advisor Dr. Bixiang Wang for his compassion and guidance in this work and his incredible support throughout my academic career. I am honored to have called Tech home for so long.
\end{acknowledgments}
\tableofcontents
%
%
\signaturepage
\chapter{Introduction}
Partial differential equations model a huge variety of phenomena in the physical, social, and life sciences. Unsurprisingly, solutions to these equations can often be just as varied and complex as the physical phenomenon being modeled. Therefore it becomes necessary to describe solutions to these equations, even when an analytic solution cannot be found. This qualitative endeavor has other benefits as well, such as being able to understand families of equations, and to predict behavior that numerical or approximate solutions would not readily find.

The subject of this thesis is the long-term behavior of solutions to two classes of stochastic degenerate parabolic equations with a $p$-Laplace term. Asymptotic behavior of solutions to these equations is investigated and used to establish the existence of random attractors. First suppose 
$(\Omega, \calf, P,  \{\theta_t\}_{t \in \R})$
 is a metric dynamical system where 
 $(\Omega, \calf, P)$ is a probability space
 and  $\{\theta_t\}_{t \in \R}$ is a measure-preserving
 transformation group on $\Omega$.
  Given $\tau \in\R$ and $\omega \in \Omega$, 
consider the   stochastic     equation 
defined for   $ x \in {\R^n}$ and   $t > \tau$,

\be
  \label{intr01}
  {\frac {\partial u}{\partial t}}  + \lambda u
  - {\rm div} \left (|\nabla u |^{p-2} \nabla u \right )
  =f(t,x,u ) 
  +g(t,x) 
  +\alpha \eta (\theta_t \omega) u   +\eps  h(x) {\frac {dW}{dt}},
\ee
with initial data,
 \be
 u( \tau, x ) = u_\tau (x),   \quad x\in  {\R^n}.
 \ee
The existence of attractors to this equation is studied in Chapter \ref{add_chap}. We also investigate existence and upper semicontinuity of attractors for the equation with multiplicative noise,

\be
  \label{intr02}
  {\frac {\partial u}{\partial t}}  + \lambda u
  - {\rm div} \left (|\nabla u |^{p-2} \nabla u \right )
  =f(t,x,u ) 
  +g(t,x) 
  +\alpha  u\circ {\frac {dW}{dt}},
 \ee
 in Chapter \ref{mult_chap}. In both cases we assume that $p\ge 2$,  $ \alpha>0  $,  $\lambda>0$,      $\eps>0 $,
  $f$ is a time dependent  nonlinearity, $g$  and $h$  are
  given functions,  
$\eta$ is a   random variable   and
$W$  is a    Wiener process on  $(\Omega, \calf, P)$.

Equations involving this type of nonlinear Laplacian operator have a rich mathematical theory, and play a role in some modern physical models. There are some good references on the mathematical aspects of this type of nonlinearity, from existence, uniqueness, and regularity of equations, to its role in a nonlinear generalization of potential theory. The reader may consult \cite{note_p} for further details. The physical models involving this type of an operator are those with nonlinear diffusion, such as in non-Newtonian fluids or glacial flow. We refer to \cite{lion1} for more details. 

It should also be noted that these two types of noise model different phenomena. Additive noise occurs in physical systems where forcing occurs due to factors that are not included in the model. A simple example of this type of noise would be wind in an earthquake model of a building. In general, additive noise is small in magnitude, and is not proportional to the unknown function being studied. In contrast, something like uncertainties in air resistance calculations might lead one to introduce multiplicative noise, where the uncertainty is dependent on the velocity of the object. There are many technical details here, both from the modeling and the mathematical standpoint. The reader is advised to consult standard books on stochastic calculus for further information, such as \cite{klebaner}.

An important observation in the study of dynamical systems is that analyzing long-term asymptotic behavior often reduces the possible dynamics of the system in question. This is frequently observed in dynamical systems which are dissipative in some sense. Many physical systems are modeled as dissipative dynamical systems due to friction and other thermodynamic losses. Studying these systems has given rise to various ways to model long-term asymptotic behavior. One particularly interesting approach is the idea of a global attractor which is an invariant subset of the phase space that attracts all trajectories.

The history of studying asymptotic dynamics is fairly old, and spans many areas of mathematics. Here we will restrict ourselves to looking at the recent developments, with particular emphasis on the theory of random attractors for partial differential equations.
Several good references exist for the classical (deterministic autonomous) theory, such as \cite{babin1, hale1, lady1, rob1, tem1}. Likewise, the more modern theory treating random and nonautonomous systems is vast. Some of the references that are particularly useful are \cite{carv1, cheb1, chu3, kloe2}.

 The
 definition of random attractor
 for autonomous stochastic systems
 was initially introduced in \cite{cra2, fla1,  schm1}.
 Since then,   
 such  attractors for autonomous stochastic PDEs
 have  been studied extensively, such as in 
  \cite{beyn1,  car1,car4, car5, 
   car3, car2, chu2, chu3, cra1, cra2, 
    fla1, gar2, gar1, gar3,  gess1, gess2, gess3,
      huang1, kloe1, schm1, shen1}  
       in bounded domains,
       and in     \cite{bat2, 
    wan1,  wan2}
         in  unbounded domains.
       For non-autonomous
       stochastic PDEs, the reader is referred to
         \cite{adi1, bat3,  car6, dua3, gess2, gess3,  kloe2,  wan5,
      wan7,  wan8}
     for   examples of the existence of  random   attractors.

If $f$  and $g$ do not depend on time,  then we
   call   \eqref{intr01}  an autonomous stochastic equation.
   In the autonomous  case,   the existence of   random attractors
   of \eqref{intr01}    has been  established recently
   in   \cite{gess2, gess3, gess1}
   by variational methods  under the condition that
    the growth rate
   of the nonlinearity $f$ is  not  bigger than $p$.
   This result   has been extended in \cite{wan8}
    to the case where    $f$ is non-autonomous and has a
    polynomial growth of any order. 
    Note that in all  papers mentioned above, 
    the  $p$-Laplace equation is defined in a bounded domain
    where compactness of Sobolev embeddings is available.
    Existence results on random
    attractors for   the stochastic $p$-Laplace equation defined on unbounded
    domains have been studied in \cite{krause1,krause2,li1}.  The goal of Chapter \ref{add_chap} is to
    overcome the non-compactness of Sobolev embeddings on $\R^n$
    and prove the existence and uniqueness of random
    attractors for \eqref{intr01} in $\ltwo$. 
    More precisely,   we will 
    show by a cut-off technique   that the tails of  solutions of \eqref{intr01} are uniformly
    small outside a bounded domain for large times.
    We then use this fact and 
      the compactness of solutions in bounded domains
      to establish the asymptotic compactness of solutions
      in $\ltwo$.
      By the asymptotic compactness and   absorbing sets of the equation,
      we  can obtain the existence and uniqueness  of random attractors.
      This  random  attractor    is   pathwise periodic if $f(t,x,u)$ and $g(t,x)$ are 
      periodic in $t$.

Similar approaches also yield existence results for a random attractor for equation \eqref{intr02}. Additionally, we demonstrate the convergence, in some sense, of the random attractor to the deterministic one as $\alpha \to 0$. These results can be found in Chapter \ref{mult_chap}, as well as in the journal article \cite{krause2}.


\chapter{Random Attractor Theory}
\setcounter{equation}{0}

 The following notation will be used throughout this thesis:
$\| \cdot \|$  for the norm  of $\ltwo$ and 
  $(\cdot, \cdot)$      for  its 
   inner product.
The norm of $L^p(\R^n)$ is usually  written  as $\| \cdot \|_p$
and the 
  norm  of a   Banach space  $X$
 is written as    $\|\cdot\|_{X}$.
 The symbol   $c$ or  $c_i$ ($i=1, 2, \ldots$) is used
 for a general positive 
      number  which may change from line to line.
 
 Finally, we recall the following inequality which will be used
 to interpolate between some spaces: 
\begin{equation}
\label{p_inequality}
\norm{u}^p_p \leq \frac{q-p}{q-2}\norm{u}^2 + \frac{p-2}{q-2}\norm{u}^q_q,
\ee
 where   $2 < p < q$  and  $u \in 
\ltwo \bigcap \lq$.  

The following discussion will be the basic theory necessary to discuss the asymptotic dynamics in the following chapters. We will specialize all results to the phase space $\ltwo$, but it should be noted that it is not difficult to extend these definitions and theorems to more general Banach spaces.

The following definition of a cocycle was introduced in \cite{wan5} in order to extend the notion of a cocycle for a random dynamical system to include systems that are also simultaneously driven by non-autonomous terms. Cocycles in general are extensions of the notion of semigroup or solution operator, in order to explicitly characterize the underlying dynamics of the probability space, and in this case the initial time. 

\begin{definition}\label{cocycle_definition} A map $\Phi:\R ^+ \times \R \times \Omega \times \ltwo
\to   \ltwo$ is called a Cocycle if
for all
  $\tau\in \R$,
  $\omega \in   \Omega $
  and    $t, s \in \R^+$,  
\begin{itemize}\sloppy
\item [(i)]   $\Phi (\cdot, \tau, \cdot, \cdot): \R ^+ \times \Omega \times \ltwo
\to   \ltwo$ is
 $(\calb (\R^+)   \times \calf \times \calb (\ltwo),  
\calb(\ltwo))$-measurable.

\item[(ii)]    $\Phi(0, \tau, \omega, \cdot) $ is the identity map on $ \ltwo$.

\item[(iii)]    $\Phi(t+ s, \tau, \omega, \cdot) =
 \Phi(t,  \tau +s,  \theta_{s} \omega, \cdot) 
 \circ \Phi(s, \tau, \omega, \cdot)$.

\item[(iv)]    $\Phi(t, \tau, \omega,  \cdot):  \ltwo \to   \ltwo$
 is continuous.
    \end{itemize}
		$\Phi$ is said to be $T$-periodic if for every $t \in \R^{+}$, $\tau \in \R$ and $\omega \in \Omega$,

		\begin{equation*}
			\Phi(t,\tau+T,\omega,\cdot) = \Phi(t,\tau,\omega,\cdot).
		\end{equation*}
\end{definition}
In order to discuss the notion of an attractor, we have to first discuss the domain of attraction. The idea of global attractor is a set which attracts all bounded subsets of the phase space. We can also discuss more general domains of attraction by defining a collection of nonempty bounded subsets of the phase space, which become time and path dependent random sets. This extension is in fact very useful to discuss uniqueness and compactness of attractors in certain cases. The reader is referred to the book \cite{carv1} for further discussion of these details. We define such a collection as,

\begin{equation*}
	\cald = \{D = \{D(\tau,\omega)\subseteq \ltwo : \tau \in \R, \omega \in \Omega \} \}.
\end{equation*}
Elements of $\cald$ must be bounded and nonempty, and the entire collection must also be inclusion-closed, which we define below.

\begin{definition}
A collection of sets $\cald$ is called inclusion-closed if whenever $D \in \cald$, and $D' \subseteq \ltwo$ is such that $D' \subseteq D$, then $D' \in \cald$.
\end{definition}
The set $\cald$ is often referred to as a universe in the literature. We will discuss the particular choice of universe for the attractors discussed later in the thesis. 

\begin{definition}\label{attractor_definition}A set $\cala  = \{\cala (\tau, \omega): \tau \in \R,
  \omega \in \Omega \} \in  \cald$ is called a $\cald$-random pullback attractor
  for  $\Phi$ in $\ltwo$ if the following are satisfied : 
 \begin{itemize}
\item [(i)]   $ {\mathcal{A}}$ is measurable
  and
 $ {\mathcal{A}}(\tau, \omega)$ is compact for all $\tau \in \R$
and    $\omega \in \Omega$.

\item[(ii)]   $ {\mathcal{A}}$  is invariant, that is,
for every $\tau \in \R$ and
 $\omega \in \Omega$,
$$ \Phi(t, \tau, \omega,  {\mathcal{A}}(\tau, \omega)   )
=  {\mathcal{A}} ( \tau +t, \theta_{t} \omega
), \ \  \forall \   t \ge 0.
$$

\item[(iii)]   For every
 $B = \{B(\tau, \omega): \tau \in \R, \omega \in \Omega\}
 \in  {\mathcal{D}}$ and for every $\tau \in \R$ and
 $\omega \in \Omega$,
$$ \lim_{t \to  \infty} \text{dist}_{\ltwo} (\Phi(t, \tau -t,
 \theta_{-t}\omega, B(\tau -t, 
 \theta_{-t}\omega) ) ,  {\mathcal{A}} (\tau, \omega ))=0,
$$
where  $\text{dist}_{\ltwo}$    is the Hausdorff   semi-distance
between two sets in $\ltwo$.
 \end{itemize}
\end{definition}
There are several important aspects of this definition. The $\cald$-pullback random attractor is parameterized by both initial time, $\tau$, and the path in the probability space, $\omega$. There is also an important dependence on the universe $\cald$ in which this object is attracting. We refer to the above cited literature for discussion and further motivation for why this particular object is interesting to study.

There are many different ways to prove the existence and uniqueness of an attractor, depending on the setting. Below we recall two definitions that will be used in this work, and are particularly important in the dynamics of PDEs, where compactness is a crucial technical concern.

\begin{definition}
			Let $K \in \cald$ be a family of nonempty closed subsets of $\ltwo$. Then $K$ is called a $\cald$-pullback absorbing set for $\Phi$ if for all $\tau \in \R, \omega \in \Omega$ and for every $B \in \cald$, there exists $T = T(B,\tau,\omega) > 0$ such that
			\begin{equation*}
				\Phi(t.\tau-t,\theta_{-t}\omega,B(\tau - t, \theta_{-t}\omega)) \subseteq K(\tau, \omega),\quad \text{for all} t \geq T. 
			\end{equation*}
			If, in addition, $K$ is measurable with respect to $\calf$ in $\Omega$, then $K$ is called a closed measurable $\cald$-pullback absorbing set of $\Phi$.
\end{definition}
\begin{definition}
			A Cocycle $\Phi$ on $\ltwo$ over $\R$ and $(\Omega, \calf, P, \theta)$ is called $\cald$-pullback asymptotically compact in $\ltwo$ if for every  
 $\tau \in \R$, $\omega \in \Omega$,
 $D  \in \cald$,  $t_n \to \infty$
 and $ u_{0,n}  \in D(\tau -t_n, \theta_{ -t_n} \omega )$,
 the sequence
 $\Phi (t_n, \tau -t_n,  \theta_{-t_n} \omega,   u_{0,n}  )$
  has a
   convergent
subsequence in $\ltwo $.
\end{definition}
Finally, we recall the following proposition that will be used throughout this work to guarantee existence, uniqueness, and periodicity of $\cald$-pullback random attractors. It is an extension of classical existence results to the definition of cocycle in the non-autonomous stochastic setting. The proof and further discussion can be found in \cite{wan5}.

\begin{proposition}\label{att}
 Let $ \cald$ be   the 
 collection given above.
If   
$\Phi$ is $ \cald$-pullback asymptotically
compact in $\ltwo$ and $\Phi$ has a  
closed  
   measurable  
     $ \cald$-pullback absorbing set
  $K$ in $ \cald$,  then
$\Phi$ has a unique  $\cald$-pullback
attractor $ \cala $  in $\ltwo$  which is  given  by, 
for each $\tau  \in \R$   and
$\omega \in \Omega$,
$$
 \cala (\tau, \omega)
=\Omega(K, \tau, \omega)
=\bigcup_{B \in  {\mathcal{D}}} \Omega(B, \tau, \omega),
$$
where $\Omega(K)
=\{ \Omega(K, \tau, \omega): \tau \in \R,
\omega \in \Omega\}$
is the $\omega$-limit set of $K$.

If, in addition, 
there is $T>0$  such
that  
$\Phi (t, \tau +T, \omega, \cdot)
=\Phi (t, \tau, \omega, \cdot)$
and $K(\tau +T, \omega) = K(\tau, \omega)$
for   all  $t\in \R^+$,  $\tau \in \R$
and $\omega \in \Omega$,
then the attractor $\cala$ is pathwise $T$-periodic, i.e.,
$\cala(\tau +T, \omega) = \cala(\tau, \omega)$.
  \end{proposition}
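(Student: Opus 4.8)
The plan is to reduce this to the classical construction of a pullback attractor as the union of $\omega$-limit sets, adapted to the non-autonomous cocycle framework of Definition~\ref{cocycle_definition}. First I would define, for each $\tau\in\R$ and $\omega\in\Omega$, the $\omega$-limit set of the absorbing family $K$ by
$$
\Omega(K,\tau,\omega)=\bigcap_{s\ge 0}\overline{\bigcup_{t\ge s}\Phi\bigl(t,\tau-t,\theta_{-t}\omega,K(\tau-t,\theta_{-t}\omega)\bigr)},
$$
and similarly $\Omega(B,\tau,\omega)$ for each $B\in\cald$. The standard characterization is that $y\in\Omega(K,\tau,\omega)$ iff there exist $t_n\to\infty$ and $u_{0,n}\in K(\tau-t_n,\theta_{-t_n}\omega)$ with $\Phi(t_n,\tau-t_n,\theta_{-t_n}\omega,u_{0,n})\to y$; this is where $\cald$-pullback asymptotic compactness enters, guaranteeing these sets are nonempty and compact. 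Because $K$ is absorbing, every $\Omega(B,\tau,\omega)\subseteq\Omega(K,\tau,\omega)$, so $\cala(\tau,\omega):=\Omega(K,\tau,\omega)=\bigcup_{B\in\cald}\Omega(B,\tau,\omega)$.

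Next I would verify the three defining properties. Compactness of each $\cala(\tau,\omega)$ follows from asymptotic compactness together with the absorption property: any sequence drawn along the orbit of $K$ has a convergent subsequence, and closedness is built into the definition. For invariance, $\Phi(t,\tau,\omega,\cala(\tau,\omega))=\cala(\tau+t,\theta_t\omega)$, I would use the cocycle property (iii) and continuity (iv): given $y\in\cala(\tau,\omega)$ realized as a limit $\Phi(t_n,\tau-t_n,\theta_{-t_n}\omega,u_{0,n})\to y$, continuity of $\Phi(t,\tau,\omega,\cdot)$ and the identity $\Phi(t+t_n,\tau-t_n,\theta_{-t_n}\omega,\cdot)=\Phi(t,\tau,\omega,\cdot)\circ\Phi(t_n,\tau-t_n,\theta_{-t_n}\omega,\cdot)$ give $\Phi(t,\tau,\omega,y)\in\cala(\tau+t,\theta_t\omega)$; the reverse inclusion uses asymptotic compactness to extract a preimage along the orbit. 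For attraction, I would argue by contradiction: if $\mathrm{dist}_{\ltwo}(\Phi(t_n,\tau-t_n,\theta_{-t_n}\omega,B(\tau-t_n,\theta_{-t_n}\omega)),\cala(\tau,\omega))\ge\delta$ for some $t_n\to\infty$ and some choice of points, then asymptotic compactness yields a convergent subsequence whose limit lies in $\Omega(B,\tau,\omega)\subseteq\cala(\tau,\omega)$, a contradiction. Measurability of $\cala$ follows from the measurability of $\Phi$ in property (i) by a standard argument expressing the closed random set through countable operations. Uniqueness is then immediate: any other $\cald$-pullback attractor must, by invariance and attraction, contain and be contained in $\cala$.

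For the periodicity statement, I would observe that the hypotheses $\Phi(t,\tau+T,\omega,\cdot)=\Phi(t,\tau,\omega,\cdot)$ and $K(\tau+T,\omega)=K(\tau,\omega)$ make the entire construction $T$-periodic in $\tau$: each set $\Phi(t_n,(\tau+T)-t_n,\theta_{-t_n}\omega,K(\cdots))$ equals the corresponding set at $\tau$, hence $\Omega(K,\tau+T,\omega)=\Omega(K,\tau,\omega)$, i.e.\ $\cala(\tau+T,\omega)=\cala(\tau,\omega)$.

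The main obstacle I expect is not any single property in isolation but the careful handling of the pullback/cocycle bookkeeping: because the noise base point is shifted to $\theta_{-t}\omega$ and the initial time to $\tau-t$, one must be scrupulous about which cocycle identity to apply and about the order of composition when proving invariance, and one must confirm that the absorption time $T(B,\tau,\omega)$ can be used uniformly along the pullback sequences. The measurability of the random closed set $\cala$ is also a point that requires genuine care rather than a one-line dismissal, though it follows the now-standard template in the references cited (in particular \cite{wan5}).
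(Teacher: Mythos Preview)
The paper does not actually prove this proposition: it is stated as a recalled result, with the sentence ``The proof and further discussion can be found in \cite{wan5}'' immediately preceding it. So there is no in-paper argument to compare against; the paper treats Proposition~\ref{att} as a black box imported from the literature.

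Your sketch is the standard construction one finds in \cite{wan5} and related references, and it is essentially correct. A couple of small points you glossed over but which are routine: (i) to conclude $\cala\in\cald$ (required by Definition~\ref{attractor_definition}) one uses that $K$ absorbs itself, so $\Omega(K,\tau,\omega)\subseteq K(\tau,\omega)$, and then inclusion-closedness of $\cald$; (ii) in the periodicity step, you should also note that $K$ being $T$-periodic ensures the shifted family still lies in $\cald$ so that the absorption argument goes through unchanged. Your own caveat about measurability is apt: that is exactly the step that \cite{wan5,wan7} handle with some care and which a full write-up cannot wave away.
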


\chapter{Additive Noise}\label{add_chap}
\section{Cocycles  Associated with Degenerate Equations} 
\setcounter{equation}{0}

In this section, we  first establish the well-posedness of equation
\eqref{intr01} in $\ltwo$, and then define a continuous cocycle
for the   stochastic equation. This step is necessary for us
to investigate the asymptotic behavior of solutions.
 
Let   $(\Omega, \calf, P)$ be the standard probability space
where 
$ 
\Omega = \{ \omega   \in C(\R, \R ):  \omega(0) =  0 \}
$,   $\calf$   is 
 the Borel $\sigma$-algebra induced by the
compact-open topology of $\Omega$  and $P$
is   the   Wiener
measure on $(\Omega, \calf)$.   
Denote by  $\{\theta_t\}_{t \in \R}$ 
 the  family of shift operators given by 
$$
 \theta_t \omega (\cdot) = \omega (\cdot +t) - \omega (t)
\quad  \mbox{ for  all  } \   \omega \in \Omega \ 
\mbox{ and } \  t \in \R.
$$
From \cite{arn1} we know that 
 $(\Omega, \calf, P,  \{\theta_t\}_{t \in \R})$
 is a metric dynamical system.
    Given $\tau \in\R$ and $\omega \in \Omega$, 
consider the  following  stochastic     equation 
defined for   $ x \in {\R^n}$ and   $t > \tau$, 
\be
  \label{seq1}
  {\frac {\partial u}{\partial t}}  + \lambda u
  - {\rm div} \left (|\nabla u |^{p-2} \nabla u \right )
  =f(t,x,u ) 
  +g(t,x) 
  +\alpha \eta (\theta_t \omega) u   +\eps  h(x) {\frac {dW}{dt}}
 \ee  with  initial condition
 \be\label{seq2}
 u( \tau, x ) = u_\tau (x),   \quad x\in  {\R^n},
 \ee
 where $p\ge 2$,  $ \alpha>0  $,  $\lambda>0$,      $\eps>0 $,
$g \in L^2_{loc}(\R, \ltwo)$,  $h \in H^2({\R^n})$, 
$\eta$ is an integrable tempered  random variable   and
$W$  is a  two-sided real-valued Wiener process on  $(\Omega, \calf, P)$.
We  assume the nonlinearity 
$f: \R \times {\R^n} \times \R$ 
$\to \R$ is continuous and satisfies, 
for all
$t, s \in \R$   and  $x \in {\R^n}$, 
\be 
\label{f1}
f (t, x, s) s \le - \gamma
 |s|^q + \psi_1(t, x),  
\ee
\be 
\label{f2}
|f(t, x, s) |   \le \psi_2 (t,x)  |s|^{q-1} + \psi_3 (t, x),
\ee
\be 
\label{f3}
{\frac {\partial f}{\partial s}} (t, x, s)   \le \psi_4 (t,x),
\ee\sloppy
where $\gamma>0$  and $ q \ge p $
are constants,
$\psi_1 \in L^1_{loc} (\R,  L^1( {\R^n}) )$,
$\psi_2, \psi_4 \in L^\infty_{loc} (\R, L^\infty ( {\R^n}))$,
and $\psi_3 \in L^{q_1}_{loc} (\R, L^{q_1} (  {\R^n}))$.
From now on,   we  always  
assume  $h \in H^2({\R^n}) \bigcap  W^{2,q} ({\R^n})$
and   use $p_1$ and $q_1$
to denote 
  the conjugate exponents
of $p$ and $q$, respectively. 
Since  $h \in H^2({\R^n}) \bigcap  W^{2,q} ({\R^n})$
and $q \ge p$, by \eqref{p_inequality} we find
$h \in   W^{2,p} ({\R^n})$.

To define a random dynamical system for
  \eqref{seq1},  we need to transfer the stochastic equation
  to a pathwise  deterministic  system.
  As usual,  let $z$ be the 
  random variable  given by:
$$
z ( \omega)=   - \lambda  \int^0_{-\infty} e^{\lambda  \tau}    \omega  (\tau) d \tau,
\quad \omega \in \Omega.
$$
It  follows from \cite{arn1} that
 there exists a $\theta_t$-invariant 
 set $\widetilde{\Omega}  $  
 of full
 measure 
 such that  
  $z(\theta_t \omega)$  is
 continuous in $t$ 
and $ \lim\limits_{t \to \pm \infty}   {\frac { |z(\theta_t \omega)|}{|t|}}
= 0 $
for all  $\omega \in \widetilde{\Omega}$.
We also  assume    $\eta(\theta_t \omega)$  is
pathwise 
 continuous  for each fixed $\omega \in  \widetilde{\Omega}$.
For convenience,  we will  denote  
$\widetilde{\Omega}$    by   $\Omega$
in the sequel. 
  Let $u(t, \tau, \omega, u_\tau)$
   be a solution of problem \eqref{seq1}-\eqref{seq2}
   with initial condition
   $u_\tau$ at initial time $\tau$,  and  define
 \be
 \label{uv}
 v(t, \tau, \omega, v_\tau)
   =   u(t, \tau, \omega, u_\tau)
    - \eps  h(x) z(\theta_{t} \omega)
 \quad \mbox{with }   \
 v_\tau =   u_\tau - \eps  h  z(\theta_\tau \omega).
 \ee
 By \eqref{seq1} and \eqref{uv}, 
 after simple calculations,
  we get
 \be 
 \label{veq1}
\frac{\partial v}{\partial t}-\text{div}
\left (
\abs{\nabla
 (v+\eps h(x)z(\theta_{t} \omega))}^{p-2}
 \nabla (v+\eps h(x)z(\theta_{t} \omega))
 \right )
 + \lambda v 
 $$
 $$
 = f(t,x,v+\eps h(x)z(\theta_{t} \omega))
+ g(t,x) + \alpha\eta(\theta_t\omega)v
+ \alpha \eps  \eta(\theta_t\omega)     z(\theta_t\omega)  h,
\ee
with initial   condition
\be 
 \label{veq2}
v(\tau, x )=v_{\tau}(x ), \quad  x\in {\R^n}.
\ee
In what follows,  we first prove the well-posedness
of problem  \eqref{veq1}-\eqref{veq2}
in $\ltwo$, and then define
a cocycle   for \eqref{seq1}-\eqref{seq2}.

 \begin{definition}\sloppy
 \label{defv}
 Given $\tau \in \R$, $\omega \in \Omega$
 and
 $v_\tau \in \ltwo$, 
 let  $v(\cdot, \tau, \omega, v_\tau)$: $[\tau, \infty) \to \ltwo$
 be a continuous function with
  $v
 \in L^p_{loc}([\tau, \infty), W^{1,p}({\R^n}) ) \bigcap
 L^q_{loc} ([\tau, \infty), \lq)
 $  and 
 ${\frac {dv}{dt}}
 \in L^{p_1}_{loc}([\tau, \infty),  (W^{1,p}) ^*) 
 + 
 L^{2}_{loc} ( [\tau, \infty),  \ltwo )
 + 
 L^{q_1}_{loc} ( [\tau, \infty), L^{q_1} ({\R^n} ) )$.
 We say $v$ is a solution of   \eqref{veq1}-\eqref{veq2}
 if  $ v(\tau, \tau, \omega, v_\tau) =v_\tau$  
   and     for every\\
   $\xi \in   W^{1,p}({\R^n}) \bigcap \ltwo  \bigcap  \lq $,
 $$
 {\frac {d}{dt}}   (v, \xi)
 +\int_{\R^n} 
 \abs{\nabla
 (v+\eps h z(\theta_{t} \omega))}^{p-2}
 \nabla (v+\eps h z(\theta_{t} \omega) )
\cdot   \nabla \xi dx 
+ \left (\lambda - \alpha\eta(\theta_t\omega) \right )
 (v, \xi)
 $$
 $$
 =
 \int_{\R^n} f(t,x,v+\eps  h z(\theta_{t} \omega)) \xi dx
+ (g(t, \cdot), \xi) 
+ \alpha \eps  \eta(\theta_t\omega)     z(\theta_t\omega)  
(h, \xi)
   $$
 in the sense of distribution on $[\tau, \infty)$.
 \end{definition}
 
 Next, we prove the existence and uniqueness of solutions
 of \eqref{veq1}-\eqref{veq2} in $\ltwo$. To this end,
 we  set $\o_k = \{ x \in \R^n:  |x| < k \}$ for each $k \in \N$
 and consider the following equation defined in $\o_k$:
 \be 
 \label{veqk1}
\frac{\partial v_k}{\partial t}-\text{div}
\left (
\abs{\nabla
 (v_k+\eps h(x)z(\theta_{t} \omega))}^{p-2}
 \nabla (v_k+\eps h(x)z(\theta_{t} \omega))
 \right )
 + \lambda v_k 
 $$
 $$
 = f(t,x,v_k+\eps h(x)z(\theta_{t} \omega))
+ g(t,x) + \alpha\eta(\theta_t\omega)v_k
+ \alpha \eps  \eta(\theta_t\omega)     z(\theta_t\omega)  h,
\ee
with boundary condition
\be\label{veqk2}
v_k (t, x) = 0 \quad   \text{for all } t>\tau \text{ and }  |x| =k
\ee
and  initial   condition
\be 
 \label{veqk3}
v(\tau, x )=v_{\tau}(x )  \quad  \text{ for all }  x\in \o_k.
\ee
 By the arguments in \cite{wan8},  one can show that
 if  \eqref{f1}-\eqref{f3} are fulfilled,  then
 for every $\tau \in \R$
 and   $\omega \in \Omega$,   system  \eqref{veqk1}-\eqref{veqk3}
  has  a unique  solution
 $v_k(\cdot, \tau, \omega, v_\tau)  $
 in the sense of Definition \ref{defv} with $\R^n$  replaced by
 $\o_k$.   Moreover, 
 $v_k(t, \tau, \omega, v_\tau)$  is  $(\calf, \calb (L^2(\o_k)))$-measurable
 with respect to $\omega \in \Omega$. 
 We now investigate 
  the limiting behavior of $v_k$
 as $k \to \infty$.
  For convenience,  we   write  $V_k
  =W_0^{1,p} (\o_k)$    and $V= W^{1,p} (\R^n)$.
  Let   
 $A$: $ V_k  \to V_k^*  $ be  the operator  given by
\be\label{Av}
 (A(v_1),  v_2 )_{(V_k^*, V_k)}
 = \int_{\o_k} | \nabla v_1|^{p-2} \nabla v_1 \cdot \nabla v_2 dx,
 \quad  \mbox{for all} \ v_1, v_2 \in V_k,
\ee
 where $ (\cdot,   \cdot )_{(V_k^*, V_k)}$ is the duality pairing 
 of $V_k^*$  and $V_k$.
 Note that $A$  is a monotone operator
 as in \cite{show1} and  $A: V \to V^*$  is also well defined by
 replacing $\o_k$ by $\R^n$ in \eqref{Av}.
 The following uniform estimates on $v_k$  are useful.
  
 \begin{lemma}
 \label{lemvk}
 Suppose \eqref{f1}-\eqref{f3} hold. Then for every  $T>0$, 
 $\tau \in \R$, $\omega \in \Omega$  and $v_\tau \in \ltwo$,
 the solution $v_k(t, \tau, \omega, v_\tau)$ of system
 \eqref{veqk1}-\eqref{veqk3} has the properties:
$$
\{v_k\}_{k=1}^\infty
\ \mbox{ is bounded in } \
$$
$$
 L^\infty (\tau, \tau +T;  L^2(\o_k) ) \bigcap L^q(\tau, \tau +T;  L^q(\o_k) )
 \bigcap L^p(\tau, \tau +T; V_k),
$$
$$
 \{A( v_k+\eps h z(\theta_{t} \omega) )\}_{k=1}^\infty
\ \mbox{ is bounded in } \
  L^{p_1}(\tau, \tau +T; V_k^*)  \  \text{ with }  \ {\frac 1{p_1}} + {\frac 1p} =1,
  $$
  $$
  \{   f(t,x,v_k+\eps h z(\theta_{t} \omega))  \}_{k=1}^\infty
 \  \mbox{ is bounded in }  \
 L^{q_1} (\tau, \tau +T; L^{q_1} ({\o_k})),
 \quad  {\frac 1{q_1}} + {\frac 1q} =1,
$$
and
$$
  \left \{ {\frac {dv_k}{dt}}  \right \} 
  \mbox{ is bounded in }   
$$
$$
  L^{p_1}(\tau, \tau +T; V_k^*)   +  
    L^{2} (\tau, \tau +T; L^{2} ({\o_k}))
     +  
 L^{q_1} (\tau, \tau +T; L^{q_1} ({\o_k})).
 $$ \end{lemma}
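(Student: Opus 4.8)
The plan is to derive these bounds by the standard energy method, working with the equation \eqref{veqk1}–\eqref{veqk3} posed on the bounded domain $\o_k$, where the solution $v_k$ is known to exist and be unique in the sense of Definition \ref{defv}. The crucial point is that all constants must be shown to be independent of $k$; they will depend only on $T$, $\tau$, $\omega$, $\|v_\tau\|$, and the data $f,g,h,\eta,z$. To keep the argument rigorous one should in principle carry out the estimates on a Galerkin approximation of \eqref{veqk1} and then pass to the limit, but since the well-posedness in the sense of Definition \ref{defv} is already granted, I will describe the formal computation; it is justified at the Galerkin level in the usual way.

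First I would test \eqref{veqk1} with $v_k$ (legitimate since $v_k \in L^p(\tau,\tau+T;V_k)\cap L^q(\tau,\tau+T;L^q(\o_k))$). Writing $u_k = v_k + \eps h z(\theta_t\omega)$, the diffusion term $-\int_{\o_k}\mathrm{div}(|\nabla u_k|^{p-2}\nabla u_k)\,v_k\,dx = \int_{\o_k}|\nabla u_k|^{p-2}\nabla u_k\cdot\nabla v_k\,dx = \|\nabla u_k\|_{L^p(\o_k)}^p - \eps z(\theta_t\omega)\int_{\o_k}|\nabla u_k|^{p-2}\nabla u_k\cdot\nabla h\,dx$; the last term is absorbed by Young's inequality into $\tfrac12\|\nabla u_k\|_{L^p(\o_k)}^p$ plus a term controlled by $|z(\theta_t\omega)|^p\|\nabla h\|_p^p$. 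The nonlinearity is handled by \eqref{f1}: $\int f(t,x,u_k)v_k\,dx = \int f(t,x,u_k)u_k\,dx - \eps z(\theta_t\omega)\int f(t,x,u_k)h\,dx \le -\gamma\|u_k\|_q^q + \|\psi_1(t)\|_{L^1} + (\text{term bounded via \eqref{f2} by } c|z(\theta_t\omega)|\|\psi_2\|_\infty\|u_k\|_q^{q-1}\|h\|_q + \ldots)$, where the intermediate powers of $\|u_k\|_q$ are absorbed into $\tfrac{\gamma}{2}\|u_k\|_q^q$ by Young. The remaining terms $(g,v_k)$, $\alpha\eta(\theta_t\omega)\|v_k\|^2$, and $\alpha\eps\eta z(h,v_k)$ are estimated by Cauchy–Schwarz and Young. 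Using $\|\nabla v_k\|_p^p \le c(\|\nabla u_k\|_p^p + |\eps z|^p\|\nabla h\|_p^p)$ and $\|v_k\|_q^q \le c(\|u_k\|_q^q + |\eps z|^q\|h\|_q^q)$, one arrives at a differential inequality of the form
\[
\frac{d}{dt}\|v_k\|^2 + c_1\|\nabla v_k\|_p^p + c_1\|v_k\|_q^q \le c_2(t)\|v_k\|^2 + c_3(t),
\]
where $c_2(t) = 2\alpha|\eta(\theta_t\omega)| + 1$ is locally integrable in $t$ and $c_3(t)$ collects $\|\psi_1(t)\|_{L^1}$, $\|g(t)\|^2$, $\|\psi_3(t)\|_{q_1}^{q_1}$, and powers of $|z(\theta_t\omega)|$ times fixed norms of $h$ — all in $L^1(\tau,\tau+T)$ by the hypotheses on the data and the pathwise continuity of $z,\eta$. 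Gronwall's lemma then gives the bound on $\{v_k\}$ in $L^\infty(\tau,\tau+T;L^2(\o_k))$, and integrating the differential inequality gives the bounds in $L^p(\tau,\tau+T;V_k)$ and $L^q(\tau,\tau+T;L^q(\o_k))$, uniformly in $k$.

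From the $L^p(\tau,\tau+T;V_k)$ bound on $v_k$ (hence on $u_k$, since $h\in W^{1,p}$ and $z$ is bounded on $[\tau,\tau+T]$), the bound on $\{A(u_k)\}$ in $L^{p_1}(\tau,\tau+T;V_k^*)$ follows immediately, because $\|A(u_k)\|_{V_k^*} = \|\,|\nabla u_k|^{p-2}\nabla u_k\,\|_{L^{p_1}(\o_k)} = \|\nabla u_k\|_{L^p(\o_k)}^{p-1}$ by the definition \eqref{Av} and Hölder. Similarly, \eqref{f2} gives $|f(t,x,u_k)|^{q_1}\le c(\psi_2^{q_1}|u_k|^q + \psi_3^{q_1})$, so integrating over $\o_k\times(\tau,\tau+T)$ and using $\psi_2\in L^\infty_{loc}$, $\psi_3\in L^{q_1}_{loc}$, together with the $L^q$ bound on $u_k$, yields the uniform bound on $\{f(t,x,u_k)\}$ in $L^{q_1}(\tau,\tau+T;L^{q_1}(\o_k))$. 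Finally, reading \eqref{veqk1} as
\[
\frac{dv_k}{dt} = \mathrm{div}(|\nabla u_k|^{p-2}\nabla u_k) + f(t,x,u_k) + g(t,x) - (\lambda-\alpha\eta(\theta_t\omega))v_k + \alpha\eps\eta(\theta_t\omega)z(\theta_t\omega)h,
\]
the first term lies in the bounded set of $L^{p_1}(\tau,\tau+T;V_k^*)$, the second in $L^{q_1}(\tau,\tau+T;L^{q_1}(\o_k))$, and the remaining terms in $L^2(\tau,\tau+T;L^2(\o_k))$ (using $g\in L^2_{loc}$, the $L^\infty(L^2)$ bound on $v_k$, boundedness of $\eta,z$ on $[\tau,\tau+T]$, and $h\in L^2$), which is exactly the claimed bound on $\{dv_k/dt\}$.

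The main obstacle is the uniformity in $k$: one must be careful that the Poincaré-type and Sobolev constants on $\o_k$ that appear when absorbing the cross terms do not degenerate as $k\to\infty$. This is avoided by never using Poincaré's inequality on $\o_k$ — the coercive terms $\|\nabla v_k\|_p^p$ and $\|v_k\|_q^q$ from the diffusion and the dissipative structure \eqref{f1} are kept on the left-hand side and only the $L^2$ norm is fed into Gronwall — so every constant that enters is either a numerical Young/Hölder constant or a fixed norm of the data, none of which depends on $k$. A secondary technical point is that $v_k$ is only a weak solution, so strictly the energy identity obtained by testing with $v_k$ must first be established for Galerkin approximations and then transferred by lower semicontinuity of norms; this is routine and I would only remark on it.
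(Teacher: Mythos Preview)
Your proposal is correct and follows essentially the same route as the paper: test \eqref{veqk1} with $v_k$, split the diffusion and nonlinear terms into their $u_k=v_k+\eps h z$ part plus a cross term absorbed by Young's inequality, combine into a differential inequality, and then read off the bounds on $A(u_k)$, $f(\cdot,\cdot,u_k)$, and $dv_k/dt$ from the resulting estimates and the equation. The only cosmetic difference is that the paper writes out the integrating factor $e^{\frac{7}{4}\lambda t-2\alpha\int_0^t\eta(\theta_r\omega)\,dr}$ explicitly rather than invoking Gronwall, and keeps $\|\nabla u_k\|_p^p$ and $\|u_k\|_q^q$ (rather than $\|\nabla v_k\|_p^p$ and $\|v_k\|_q^q$) on the left, which is equivalent to what you do after your triangle-inequality step; your remarks about never using Poincar\'e on $\o_k$ and about justifying the energy identity via Galerkin are exactly the points that make the constants $k$-independent and the computation rigorous.
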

  
  \begin{proof}
  By \eqref{veqk1}  we get
 \be
 \label{plvk1_1}
\frac{1}{2}\frac{d}{dt}\norm{v_k}^2 
+  \int_{\o_k} \abs{\nabla (v_k
+\eps h z(\theta_{t} \omega))}^{p-2}
\nabla (v_k+\eps h z(\theta_{t} \omega)  )
 \cdot  \nabla v_k dx
+ \lambda \norm{v_k}^2
$$
$$
= \int_{\o_k}
 f(t,x,v_k+\eps h z(\theta_{t} \omega) ) v_k dx
 + (g(t), v_k)
$$
$$
 + \alpha\eta(\theta_t\omega) \norm{v_k}^2
 +  \alpha\eps \eta(\theta_t\omega)  z(\theta_t\omega ) (h, v_k)   .
\ee
For the second  term on the left-hand side of \eqref{plvk1_1},  
by Young\rq{}s inequality we  obtain
$$ \int_{\o_k} \abs{\nabla (v_k
+\eps h z(\theta_{t} \omega))}^{p-2}
\nabla (v_k+\eps h z(\theta_{t} \omega)  ) \cdot  \nabla v_k dx
$$
$$
  = \int_{\o_k}\abs{\nabla (v_k +\eps h z(\theta_{t} \omega))}^{p}dx 
$$
$$
  - \int_{\o_k} \abs{\nabla (v_k+\eps h z(\theta_{t} \omega))}^{p-2}
  \nabla(v_k +\eps h z(\theta_{t}\omega) ) 
  \cdot \nabla  ( \eps h z(\theta_{t} \omega ) ) dx 
  $$
  \be\label{plvk1_2}
  \ge
  {\frac 12}  \int_{\o_k}\abs{\nabla (v_k +\eps h z(\theta_{t} \omega))}^{p}dx 
   -  c_1 \abs{\eps z(\theta_{t} \omega) }^p \| \nabla h \|^p_p.
  \ee
  For the first   term on the right-hand side of \eqref{plvk1_1},
 by  \eqref{f1} and \eqref{f2} we get
   \be
    \label{plvk1_3}
\int_{\o_k}
f(t,x,v_k+\eps h  z(\theta_{t} \omega)) v_k dx 
$$
$$
=  \int_{\o_k}f(t,x,v_k+\eps h z(\theta_{t} \omega))
(v_k +\eps h z(\theta_{t} \omega))dx 
$$
$$
- \eps z(\theta_{t} \omega) \int_{\o_k}
f(t,x,v_k +\eps h z(\theta_{t} \omega))h(x)dx
$$
$$
\leq -\gamma\int_{\o_k}
\abs{v_k+\eps h z(\theta_{t} \omega))}^q dx 
+ \int_{\o_k}\psi_1(t,x)dx
$$
$$
+
\int_{\o_k}\psi_2(t,x)
\abs{v_k+\eps h z(\theta_{t} \omega))}^{q-1}
\abs{\eps h z(\theta_{t} \omega)}dx 
+ \int_{\o_k}\psi_3(t,x) \abs{\eps h z(\theta_{t} \omega)}dx
$$
$$
\leq -\frac{\gamma}{2}\norm{v_k+\eps h z(\theta_{t} \omega))}^q_q 
+ \norm{\psi_1(t)}_{1}
+\norm{\psi_3(t)}_{q_1}^{q_1} 
+ c_2 \int_{\o_k}\abs{\eps h z(\theta_{t} \omega)}^q dx.
\ee
 By Young\rq{}s inequality   we obtain 
\be \label{plvk1_4}
\int_{\o_k} g(t,x) v_k dx  + \alpha \eps \eta(\theta_t\omega)z(\theta_t\omega)
\int_{\o_k} h(x) v_k dx  
$$
$$
\leq  
  \frac{4}{\lambda} \abs{ \alpha \eps \eta(\theta_t\omega)
z(\theta_t\omega)}^2 \norm{h} ^2
 + \frac{4}{\lambda}  \norm{g (t)}^2  + \frac{\lambda}{8}\norm{v_k}^2.
\ee
It follows   from \eqref{plvk1_1}-\eqref{plvk1_4}  that
  \be
  \label{plvk1_6}
\frac{d}{dt}\norm{v_k}^2 
+ {\frac{7}{4}}  \lambda \norm{v_k}^2 
+ \int_{\o_k} \abs{\nabla (v_k+\eps hz(\theta_{t} \omega))}^p_p  dx
+ \gamma
\int_{\o_k} \abs{v_k+\eps h z(\theta_{t} \omega))}^q_q dx
$$
$$
\leq 2\alpha\eta(\theta_t\omega)\norm{v_k}^2 
+ c_3 \left (  \abs{\eps z(\theta_{t} \omega)}^p 
+   \abs{\eps z(\theta_{t} \omega)}^q 
+     \abs{\alpha \eps \eta(\theta_t\omega)z(\theta_t\omega) }^2
\right )
$$
$$
 + c_4 \left (
 \norm{g(t)}^2\
 + \norm{\psi_1(t)}_{1}+   \norm{\psi_3(t)}_{q_1}^{q_1}
 \right ) .
\ee
 Multiplying \eqref{plvk1_6}
  by $e^{\frac{7}{4}\lambda t-2\alpha\int^{t}_{0}
  \eta(\theta_{r}\omega)dr}$, 
  and then  integrating from $\tau $ to $t$, we get 
  \be 
  \label{plvk1_8}
\norm{v_k(t, \tau,\omega,v_{\tau})}^2 
$$
$$
+ \int_{\tau}^{t}e^{\frac{7}{4}\lambda(s-t)
-2\alpha\int_{t}^{s}\eta(\theta_{r}\omega)dr}
\int_{\o_k} \abs{\nabla (v_k(s, \tau,\omega,v_{\tau})
+\eps h  z(\theta_{s} \omega))}^p_p dxds
$$
$$
+ \gamma\int_{\tau}^{t}e^{\frac{7}{4}\lambda(s-t)
-2\alpha\int_{t}^{s}\eta(\theta_{r}\omega)d r}
\int_{\o_k}\abs{v_k(s, \tau,\omega,v_{\tau})
+\eps h  z(\theta_{s} \omega))}^q_q dxds
$$
$$
\le  c_3  \int_{\tau}^{t}e^{\frac{7}{4}\lambda(s-t)-2\alpha\int_{t}^{s}
\eta(\theta_{r}\omega)d r}
\left (
\abs{\eps z(\theta_{s} \omega)}^p 
+  \abs{\eps z(\theta_{s} \omega)}^q 
+  \abs{\alpha \eps \eta(\theta_{s}\omega)z(\theta_s\omega)}^2
\right ) ds
$$
$$
+ c_4 \int_{\tau}^{t}e^{\frac{7}{4}
\lambda(s-t)-2\alpha\int_{t}^{s}\eta(\theta_{r}\omega)d r}
\left (
\norm{g(s)}^2
 + \norm{\psi_1(s)}_{1}+\norm{\psi_3(s)}_{q_1}^{q_1} 
\right )ds 
$$
$$
+ e^{\frac{7}{4}\lambda(\tau-t)-2\alpha\int_{t}^{\tau}
\eta(\theta_{r}\omega)d r}\norm{v_{\tau}}^2 .
\ee
By \eqref{plvk1_8} we get
\be\label{plvk1_10}
\{v_k\} 
\ \mbox{ is bounded in } \
$$
$$
 L^\infty (\tau, \tau +T;  L^2(\o_k) ) \bigcap L^q(\tau, \tau +T;  L^q(\o_k) )
 \bigcap L^p(\tau, \tau +T; V_k).
\ee
By \eqref{f2} and \eqref{plvk1_10} we obtain
\be\label{plvk1_11}
  \{   f(t,x,v_k+\eps h z(\theta_{t} \omega))  \}_{k=1}^\infty
 \  \mbox{ is bounded in }  \
 L^{q_1} (\tau, \tau +T; L^{q_1} ({\o_k}) ).
\ee
By \eqref{Av}  and \eqref{plvk1_10} we get
\be\label{plvk1_12}
 \{A( v_k+\eps h z(\theta_{t} \omega)   ) \}_{k=1}^\infty
\ \mbox{ is bounded in } \
  L^{p_1}(\tau, \tau +T; V_k^*) .
  \ee
   By \eqref{plvk1_10}-\eqref{plvk1_12}  it follows from 
   \eqref{veqk1} that 
$$
  \left \{ {\frac {dv_k}{dt}}  \right \} 
  \mbox{ is bounded in }   
  L^{p_1}(\tau, \tau +T; V_k^*)   
$$
$$ +
    L^{2} (\tau, \tau +T; L^{2} ({\o_k}))
     +  
 L^{q_1} (\tau, \tau +T; L^{q_1} ({\o_k})),
$$
which completes   the proof.
  \end{proof}

 The next lemma is concerned with the well-posedness
 of \eqref{veq1}-\eqref{veq2} in $\ltwo$.

 \begin{lemma}
 \label{exiv}
 Suppose \eqref{f1}-\eqref{f3} hold. Then for every 
 $\tau \in \R$, $\omega \in \Omega$  and $v_\tau \in \ltwo$,
 problem \eqref{veq1}-\eqref{veq2} has a unique solution
 $v(t, \tau, \omega, v_\tau)$ in the sense of Definition \ref{defv}.
 In addition,  $v(t, \tau, \omega, v_\tau)$
 is  ($\calf, \calb (\ltwo))$-measurable
   in $\omega$ and continuous in
  $v_\tau$ in $\ltwo$   and   satisfies 
 $$
 {\frac d{dt}} \norm{ v(t, \tau, \omega, v_\tau) }^2
 + 2 \left (
  \lambda - \alpha \eta  (\theta_{t} \omega)
 \right ) \norm{ v }^2
 +2   \norm{ \nabla
 (v+\eps h z(\theta_{t} \omega))}^p_p
 $$
 $$
 = 2 \eps  z(\theta_{t} \omega)  \ii 
\abs{\nabla
 (v+\eps h z(\theta_{t} \omega))}^{p-2}
 \nabla (v+\eps h z(\theta_{t} \omega))
\cdot \nabla h dx
$$
\be\label{enlem1}
+2  \ii  f(t,x,v+\eps h z(\theta_{t} \omega)) v dx
+ 2 ( g(t), v)   
+ 2 \alpha \eps  \eta(\theta_t\omega)     z(\theta_t\omega)  (h, v)
\ee
  for almost  all $t \ge \tau$.
  \end{lemma}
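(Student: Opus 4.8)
\medskip
\noindent\textbf{Proof plan.}\quad
The plan is to build the solution on $\R^n$ by passing to the limit $k\to\infty$ in the finite-domain problems \eqref{veqk1}--\eqref{veqk3}, whose unique solvability and $\omega$-measurability are already in hand. Extend each $v_k$ by zero outside $\o_k$. Then Lemma \ref{lemvk} furnishes, for each $T>0$, uniform bounds for $\{v_k\}$ in $L^\infty(\tau,\tau+T;\ltwo)\cap L^q(\tau,\tau+T;\lq)\cap L^p(\tau,\tau+T;V)$, for $\{A(v_k+\eps h\zto)\}$ in $L^{p_1}(\tau,\tau+T;V^*)$, for $\{f(t,x,v_k+\eps h\zto)\}$ in $L^{q_1}(\tau,\tau+T;L^{q_1}(\R^n))$, and for $\{dv_k/dt\}$ in the corresponding sum space. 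Along a subsequence these four families converge weakly (weak-$*$ where appropriate) to limits $v$, $\chi$, $\zeta$. On each fixed ball $\o_m$, the bounds on $v_k$ and $dv_k/dt$, the compact Sobolev embedding $W^{1,p}(\o_m)\hookrightarrow L^2(\o_m)$, and the Aubin--Lions--Simon lemma give strong convergence of $v_k$ in $L^p(\tau,\tau+T;L^2(\o_m))$; a diagonal argument over $m$ extracts a further subsequence with $v_k\to v$ a.e.\ on $(\tau,\tau+T)\times\R^n$. Consequently $f(t,x,v_k+\eps h\zto)\to f(t,x,v+\eps h\zto)$ a.e., which together with \eqref{f2} and the $L^{q_1}$-bound identifies $\zeta=f(t,x,v+\eps h\zto)$. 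Passing to the limit in the weak form of \eqref{veqk1} (first with test functions in $C_c^\infty(\R^n)$, then by density) then shows $v$ satisfies the identity of Definition \ref{defv} with $\chi$ in place of $A(v+\eps h\zto)$ and $v(\tau,\tau,\omega,v_\tau)=v_\tau$.

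I expect the identification $\chi=A(v+\eps h\zto)$ to be the main obstacle, since $A$ is only monotone --- not compact --- so the limit cannot be taken termwise. I would use Minty's monotonicity trick, as for the bounded-domain equation in \cite{wan8, show1}: testing \eqref{veqk1} with $v_k$ gives an energy identity on $\o_k$; by weak-$*$ lower semicontinuity of $\norm{\cdot}$ at the endpoint time, together with convergence of all the linear terms and of the $f$-term, one deduces
\be\label{plan1}
\limsup_{k\to\infty}\int_\tau^t \big\langle A(v_k+\eps h z(\theta_s\omega)),\, v_k\big\rangle\,ds\ \le\ \int_\tau^t \big\langle \chi,\, v\big\rangle\,ds .
\ee
Combining \eqref{plan1} with the monotonicity inequality $\int_\tau^t \big\langle A(v_k+\eps h z(\theta_s\omega))-A(\phi+\eps h z(\theta_s\omega)),\, v_k-\phi\big\rangle\,ds\ge 0$ valid for every admissible $\phi$, with the weak convergences, with the hemicontinuity of $A$, and letting $\phi\to v$ along a ray, forces $\chi=A(v+\eps h\zto)$. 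This step, and the lemma itself, also require $v\in C([\tau,\infty);\ltwo)$, which follows from the regularity of $v$ and of $dv/dt$ via the generalized integration-by-parts formula for the relevant Gelfand-type triple (cf.\ \cite{show1}); here $\tfrac{dv}{dt}$ lies in $L^{p_1}_{loc}((W^{1,p})^*)+L^2_{loc}(\ltwo)+L^{q_1}_{loc}(L^{q_1}(\R^n))$, the dual of $L^p_{loc}(W^{1,p})\cap L^2_{loc}(\ltwo)\cap L^q_{loc}(\lq)$.

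For the energy equality \eqref{enlem1}, since $v(t)\in W^{1,p}(\R^n)\cap\ltwo\cap\lq$ for a.e.\ $t$, I take $\xi=v(t)$ in Definition \ref{defv}; the chain rule above gives $\tfrac{d}{dt}\norm{v}^2=2\langle\tfrac{dv}{dt},v\rangle$, and substituting $\nabla v=\nabla(v+\eps h\zto)-\eps\zto\nabla h$ into the definition \eqref{Av} of $A$ turns the weak formulation into \eqref{enlem1}. For uniqueness and continuous dependence, let $v^1,v^2$ be solutions with data $v_\tau^1,v_\tau^2$ and $w=v^1-v^2$. Since $\nabla w=\nabla(v^1+\eps h\zto)-\nabla(v^2+\eps h\zto)$, monotonicity of $A$ makes the difference of the $p$-Laplacian terms nonnegative, while the mean value theorem and \eqref{f3} bound the difference of the $f$-terms by $\norm{\psi_4(t)}_\infty\norm{w}^2$; testing the difference equation with $w$ and using the energy identity just obtained yields
\be\label{plan2}
\frac{d}{dt}\norm{w}^2\ \le\ 2\big(\norm{\psi_4(t)}_\infty+\abs{\alpha\eta(\theta_t\omega)}\big)\norm{w}^2 .
\ee
Because $\norm{\psi_4(\cdot)}_\infty\in L^\infty_{loc}(\R)$ and $\eta(\theta_\cdot\omega)$ is continuous, Gr\"onwall's inequality gives $\norm{w(t)}^2\le\norm{v_\tau^1-v_\tau^2}^2\exp\!\big(2\int_\tau^t(\norm{\psi_4(s)}_\infty+\abs{\alpha\eta(\theta_s\omega)})\,ds\big)$; taking $v_\tau^1=v_\tau^2$ gives uniqueness, and in general Lipschitz --- hence continuous --- dependence on the initial datum in $\ltwo$. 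Finally, by uniqueness the whole sequence $v_k$ converges to $v$, so $v(t,\tau,\cdot,v_\tau)$ is an $\ltwo$-limit of the $(\calf,\calb(\ltwo))$-measurable zero extensions of $v_k(t,\tau,\cdot,v_\tau)$ and is therefore itself $(\calf,\calb(\ltwo))$-measurable, as in \cite{wan5, wan8}.
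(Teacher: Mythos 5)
Your proposal is correct and follows essentially the same route as the paper: approximation by the ball problems \eqref{veqk1}--\eqref{veqk3} extended by zero, the uniform bounds of Lemma \ref{lemvk}, weak/weak-$*$ limits plus strong convergence on each $\o_{k_0}$ by compact embedding and a diagonal process, identification of the nonlinear limits (the paper defers the Minty/monotonicity step you spell out to \cite{wan8}), the chain rule from \cite{lion1} giving continuity in $\ltwo$ and the energy identity \eqref{enlem1}, uniqueness via monotonicity of $A$, \eqref{f3} and Gr\"onwall, and measurability of $v$ from that of $v_k$ together with uniqueness. The only nuance is that the established convergence $v_k(t_0,\tau,\omega,v_\tau)\to v(t_0,\tau,\omega,v_\tau)$ is weak in $\ltwo$, not strong, so your final measurability sentence should invoke weak convergence (which suffices, $\ltwo$ being separable) rather than an ``$\ltwo$-limit.''
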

  
  \begin{proof} 
  Let $T>0$,  $t_0 \in [\tau, \tau +T]$ and $v_k
  (t, \tau, \omega, v_\tau)$ be the solution of 
  system \eqref{veqk1}-\eqref{veqk3} defined
  in $\o_k$.   Extend $v_k$
  to the entire space $\R^n$
  by setting $v_k =0$    on $\R^n \setminus \o_k$
  and denote this  extension still by
  $v_k$.
    By Lemma \ref{lemvk} we find that 
 there exist
 $\widetilde{v}\in \ltwo$,
 $v \in L^\infty (\tau, \tau +T; \ltwo)  
 \bigcap L^p(\tau, \tau +T; V) \bigcap L^q(\tau, \tau +T; \lq) $,
 $\chi_1
 \in  L^{q_1} (\tau, \tau +T;  L^{q_1}({\R^n}) ) $,
  $\chi_2
 \in  L^{p_1} (\tau, \tau +T;  V^* ) $
 such that, up to a subsequence,
 \be\label{pexiv_1}
 v_k \to v \ \mbox{weak-star  in } \ 
 L^\infty (\tau, \tau +T; \ltwo ),
 \ee
  \be\label{pexiv_2}
 v_k \to v \ \mbox{weakly  in } \ 
 L^p (\tau, \tau +T;  V )
 \ \mbox{ and } \  L^q(\tau, \tau +T; \lq),
 \ee
  \be\label{pexiv_3}
 A(  v_k+\eps h z(\theta_{t} \omega))   \to  \chi_2  \ \mbox{weakly  in } \ 
 L^{p_1} (\tau, \tau +T;  V^* ),
\ee  \be\label{pexiv_4}
  f(t,x,v_k +\eps h z(\theta_{t} \omega))   \to  \chi_1  \ \mbox{weakly  in } \ 
 L^{q_1} (\tau, \tau +T; L^{q_1} ({\R^n}) ),
 \ee
 and
 \be\label{pexiv_5}
 v_k(t_0,\tau, \omega, v_\tau)
 \to  {\widetilde{v}}
  \ \mbox{weakly  in } \ 
  \ \ltwo.
 \ee
 On the other hand, by 
 the  compactness
 of embedding $W^{1,p}(\o_k)
 \hookrightarrow L^2(\o_k)$ 
 and Lemma \ref{lemvk}, we can choose a further
 subsequence  (not relabeled)
  by a diagonal process such that
  for each $k_0 \in \N$,
   \be\label{pexiv_6}
 v_k \to v
 \ \mbox{ strongly  in } \ 
 L^2(\tau, \tau +T; L^2(\o_{k_0}) ).
 \ee
 By \eqref{veqk1}
 and \eqref{pexiv_1}-\eqref{pexiv_4}
 one can  show  that 
 for every $\xi \in V\bigcap \ltwo  \bigcap \lq$,
 $$
 {\frac {d}{dt}} (v, \xi)
 +    (\chi_2, \xi)_{(V^*, V)}
 +  (\lambda -\alpha \eta(\theta_t \omega) ) (v, \xi)
 $$
 \be\label{pexiv_8}
 =
   (\chi_1, \xi)_{(L^{q_1}, L^q)}
   +  (g(t), \xi) +
   \alpha \eps \eta(\theta_t \omega) z(\theta_t \omega)
   (h, \xi) 
\ee
 in the sense of distribution.
By \eqref{pexiv_8} we find 
\be\label{pexiv_9}
 {\frac {dv}{dt}} 
 =
-    \chi_2
 +    \chi_1 
 - (\lambda -\alpha \eta(\theta_t \omega) )v
 +g 
 +\alpha \eps \eta(\theta_t \omega) z(\theta_t \omega) h
\ee
in 
$L^{p_1}(\tau, \tau +T; V^*)
+  L^{q_1} (\tau, \tau +T; L^{q_1} ({\R^n}) )
+  L^{2} (\tau, \tau +T;  \ltwo  )$,
which along with the fact  
  $v \in  L^\infty(\tau, \tau +T;  \ltwo)
  \bigcap L^p(\tau, \tau +T; V)
\bigcap L^q(\tau, \tau +T; \lq)$
implies (see, e.g., 
 \cite{lion1})  that 
$v \in C ([\tau, \tau +T], \ltwo)$ and 
\be\label{pexiv_12}
{\frac 12} {\frac d{dt}} \| v \|^2
= ( {\frac {dv}{dt}},  v)_{( V^*+ L^{q_1} +L^2, 
V\bigcap L^q \bigcap L^2 )}
 \quad \mbox{for  almost all } \  
t \in (\tau, \tau +T).
\ee
By \eqref{pexiv_1}-\eqref{pexiv_6}, we can argue as
in \cite{wan8} to show that
\be\label{pexiv_20}
 \chi_2 = 
 A(  v+\eps h z(\theta_{t} \omega))  ,
 \quad
   \chi_1  = 
  f(t,x,v  +\eps h z(\theta_{t} \omega) ),
  \quad  v(\tau) =v_\tau \ 
  \text{ and } \   v(t_0) =  { \widetilde{v}}.
 \ee
 By   \eqref{pexiv_8} and \eqref{pexiv_20} 
 we  find that 
  $v$ is a solution of problem \eqref{veq1}-\eqref{veq2}
  in the sense of Definition \ref{defv}.
  On the other hand, 
  by   \eqref{pexiv_9} and \eqref{pexiv_20} 
  we see  that $v$ satisfies  energy equation
  \eqref{enlem1}.
 
 We next  prove   the uniqueness of solutions.
 Let  $v_1$ and $v_2$   be the solutions of \eqref{veq1}
 and    $ {\widetilde{v}} = v_1 -v_2$. Then we have
 $$
 {\frac {d{\widetilde{v}}}{dt}}
 +  
  A(v_1 + \eps h    z(\theta_t \omega) ) -A(v_2 +\eps h    z(\theta_t \omega) )  
 +\lambda {\widetilde{v}}
 $$
 $$
 =\alpha \eta (\theta_t \omega) {\widetilde{v}}
 +   f (t,x,  v_1 + \eps h    z(\theta_t \omega)  )
 -f (t,x,  v_2 + \eps h    z(\theta_t \omega)  ),
 $$
 which along with \eqref{f3} and the   monotonicity 
 of $A$ yields,   for all $t \in [\tau, \tau +T]$,
 $$ {\frac {d}{dt}}
 \norm{\widetilde{v}}  ^2
 \le 2 \alpha  \eta (\theta_t \omega) \norm {\widetilde{v}}  ^2
 +2  
 \int_{\R^n} \psi_4 (t,x) | {\widetilde{v}} |^2 dx
  \le c   \| {\widetilde{v}} \|^2
 $$
 for some   positive constant $c$ depending on $\tau,
 T$ and $\omega$.
 By Gronwall\rq{}s lemma we get,
 for all $t \in [\tau, \tau +T]$,  
 \be\label{pexiv_30}
 \norm{ v_1(t, \tau, \omega, v_{1,\tau} ) - v_2(t, \tau, \omega, v_{2,\tau}) }^2
 \le e^{c  (t-\tau)} \norm{v_{1,\tau} - v_{2,\tau}  }  ^2 .
 \ee
So  the uniqueness and
 continuity of 
 solutions in  initial data  follow  
 immediately.
 
 Note that   \eqref{pexiv_5},
 \eqref{pexiv_20} and the uniqueness of solutions
 imply  that
 the entire  sequence $v_k(t_0, \tau, \omega, v_\tau)
 \to v(t_0, \tau, \omega, v_\tau)$ weakly in $\ltwo$
 for every fixed $t_0 \in [\tau, \tau +T]$
 and     $\omega \in \Omega$.
 By the  measurability  of 
  $v_k (t, \tau, \omega, v_\tau)$ in $\omega$,
  we obtain  the  measurability  of 
  $v (t, \tau, \omega, v_\tau)$ directly. 
 \end{proof}
 
 The following result is useful when
  proving the asymptotic compactness of solutions.

  \begin{lemma}
 \label{comv1}
 Let   \eqref{f1}-\eqref{f3}  hold and
 $\{v_n\}_{n=1}^\infty$ be a bounded sequence
 in $\ltwo$.
 Then for every   
  $\tau  \in \R$,  $t >\tau$ and 
 $\omega \in \Omega$,
 there exist  $v_0 \in L^2(\tau, t;   \ltwo)$
 and  a subsequence 
 $\{v(\cdot, \tau, \omega, v_{n_m})\}_{m=1}^\infty$
  of
 $\{v(\cdot, \tau, \omega, v_n)\}_{n=1}^\infty$
 such that 
 $v(s, \tau, \omega, v_{n_m})
 \to v_0 (s) $ in $L^2(\o_k)$
 as $m \to \infty$
 for every  fixed $k \in \N$
 and for almost all $s \in (\tau, t)$.
 \end{lemma}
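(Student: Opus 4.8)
The plan is to combine the uniform a priori bounds of Lemma \ref{lemvk} (passed to the limit as in Lemma \ref{exiv}) with the compact embedding $W^{1,p}(\o_k) \hookrightarrow L^2(\o_k)$ already exploited in \eqref{pexiv_6}, and then to run a diagonal extraction over the exhaustion $\{\o_k\}_{k \in \N}$ of $\R^n$.

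First I would fix $\tau \in \R$, $t > \tau$ and $\omega \in \Omega$, abbreviate $v_n(\cdot) := v(\cdot, \tau, \omega, v_n)$, and set $M := \sup_n \norm{v_n}< \infty$. Integrating the energy equation \eqref{enlem1} exactly as in the derivation of \eqref{plvk1_8} — whose only dependence on the initial datum is the term $e^{\frac{7}{4}\lambda(\tau-t)-2\alpha\int_{t}^{\tau}\eta(\theta_{r}\omega)dr}\norm{v_\tau}^2$, here bounded by the same expression with $M^2$ — yields a constant $c = c(\tau,t,\omega,M)$, independent of $n$, such that $\{v_n\}$ is bounded in $L^\infty(\tau,t;\ltwo) \cap L^p(\tau,t;V) \cap L^q(\tau,t;\lq)$. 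Arguing as for \eqref{plvk1_11}--\eqref{plvk1_12} then gives that $\{A(v_n + \eps h \zto)\}$ is bounded in $L^{p_1}(\tau,t;V^*)$ and $\{f(t,x,v_n + \eps h \zto)\}$ is bounded in $L^{q_1}(\tau,t;L^{q_1}(\R^n))$, so by \eqref{veq1} the sequence $\{\frac{dv_n}{dt}\}$ is bounded in $L^{p_1}(\tau,t;V^*) + L^2(\tau,t;\ltwo) + L^{q_1}(\tau,t;L^{q_1}(\R^n))$.

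Next, for each fixed $k \in \N$ I would restrict everything to $\o_k$. Then $\{v_n\}$ is bounded in $L^p(\tau,t;W^{1,p}(\o_k))$ and $\{\frac{dv_n}{dt}\}$ is bounded in $L^1(\tau,t;X_k)$, where $X_k := (W_0^{1,p}(\o_k) \cap \lq_k)^*$ (with $\lq_k = L^q(\o_k)$) is a Banach space into which $V_k^*$, $L^2(\o_k)$ and $L^{q_1}(\o_k)$ all embed continuously, using $p \ge 2$ and boundedness of $\o_k$. Since $W^{1,p}(\o_k) \hookrightarrow\hookrightarrow L^2(\o_k) \hookrightarrow X_k$, the Aubin--Lions--Simon lemma furnishes a subsequence converging strongly in $L^2(\tau,t;L^2(\o_k))$, and a further subsequence then converges in $L^2(\o_k)$ for almost all $s \in (\tau,t)$. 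Performing this successively for $k = 1, 2, \ldots$ and taking the diagonal subsequence $\{v_{n_m}\}$, I obtain for every $k$ a function $w_k \in L^2(\tau,t;L^2(\o_k))$ with $v_{n_m}(s) \to w_k(s)$ in $L^2(\o_k)$ for every $s$ outside a null set $Z_k \subseteq (\tau,t)$; continuity of the restriction $L^2(\o_{k+1}) \to L^2(\o_k)$ forces $w_{k+1}|_{\o_k} = w_k$, so the $w_k$ glue to a single measurable $v_0$ on $(\tau,t) \times \R^n$. Finally, for a.e.\ $s$ and every $k$ one has $\norm{w_k(s)}_{L^2(\o_k)} = \lim_m \norm{v_{n_m}(s)}_{L^2(\o_k)} \le \liminf_m \norm{v_{n_m}(s)} \le c$, and letting $k \to \infty$ gives $\norm{v_0(s)} \le c$ for a.e.\ $s$, so $v_0 \in L^\infty(\tau,t;\ltwo) \subseteq L^2(\tau,t;\ltwo)$; on $Z := \bigcup_k Z_k$, which is still null, we have $v_{n_m}(s) \to v_0(s)$ in $L^2(\o_k)$ for every $k$ and every $s \notin Z$, which is the claim.

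I expect the main obstacle to be organizational rather than conceptual: setting up the Aubin--Lions--Simon compactness on the bounded subdomains with the mixed-integrability time-derivative bounds (i.e.\ choosing a single "large" space $X_k$ that simultaneously dominates $V_k^*$, $L^2(\o_k)$ and $L^{q_1}(\o_k)$ while still lying above $L^2(\o_k)$), and then bookkeeping the exceptional null sets in time through the diagonal argument so that one null set works for all $k$. The energy estimates themselves are immediate once one observes, as above, that the dependence of \eqref{plvk1_8} on the initial datum is uniform over the bounded set $\{v_n\}$, so no new estimate beyond those of Lemma \ref{lemvk} and Lemma \ref{exiv} is needed.
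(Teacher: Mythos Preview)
Your proposal is correct and follows essentially the same route as the paper: uniform a priori bounds from the energy estimate, Aubin--Lions compactness on each $\o_k$ via $W^{1,p}(\o_k)\hookrightarrow\hookrightarrow L^2(\o_k)$, then a diagonal extraction over $k$ with the union of null sets. The only differences are expository---the paper simply invokes ``following the proof of \eqref{pexiv_6}'' and identifies the limit $\widetilde{v}\in L^2(\tau,\tau+T;\ltwo)$ upfront from the weak-$*$ limit rather than gluing local limits---so your more explicit write-up of the $X_k$ and the bookkeeping of $Z_k$ is a faithful elaboration of the same argument.
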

 
 \begin{proof}
 Let $T$ be  a sufficiently large  number 
     such that
 $t \in (\tau, \tau +T]$. 
 Following the proof of \eqref{pexiv_6},  we 
 can show   that
    there  exists ${\widetilde{v}} \in L^2(\tau,  \tau +T;  \ltwo)$
 such   that,  up to  a subsequence,
 $$
  v(\cdot, \tau, \omega, v_{n} ) \to   {\widetilde{v}}
  \ \mbox{  strongly   in } \ L^2( \tau,  \tau+T;   L^2(\o_k ) )
  \quad \text{ for every }  k \in \N.
 $$
 Thus, for $k=1$,  there exist  a set $I_1 \subseteq [\tau, \tau +T]$
 of measure zero and a subsequence  
 $  v(\cdot, \tau, \omega, v_{n_1} )$ such that
 $$
  v(s, \tau, \omega, v_{n_1} ) \to   {\widetilde{v}} (s)
  \ \mbox{    in } \   L^2(\o_1)   
  \quad \mbox{    for  all }
 \    s \in [\tau,  \tau +T]\setminus I_1 .
 $$
  Similarly,  for $k=2$,  there
   exist  a set $I_2 \subseteq [\tau, \tau +T]$
 of measure zero and a subsequence  
 $  v(\cdot, \tau, \omega, v_{n_2} )$ of
 $  v(\cdot, \tau, \omega, v_{n_1} )$ 
   such that
$$
  v(s, \tau, \omega, v_{n_2} ) \to   {\widetilde{v}} (s)
  \ \mbox{    in } \   L^2(\o_2)   
  \quad \mbox{    for  all }
 \    s \in [\tau,  \tau +T]\setminus I_2 .
  $$
  Repeating this process we find that
  for each $k \in \N$, 
  there
   exist  a set $I_k \subseteq [\tau, \tau +T]$
 of measure zero and a subsequence  
 $  v(\cdot, \tau, \omega, v_{n_k} )$ of
 $  v(\cdot, \tau, \omega, v_{n_{k-1}} )$ 
   such that
 $$
  v(s, \tau, \omega, v_{n_k} ) \to   {\widetilde{v}}  (s)
  \ \mbox{    in } \   L^2(\o_k)   
  \quad \mbox{    for  all }
 \    s \in [\tau,  \tau +T]\setminus I_k .
 $$
  Let $I =\bigcup_{k=1}^\infty I_k$. Then
  by a diagonal process,  we infer that
  there exists a subsequence (which is
  still denoted by    
 $  v(\cdot, \tau, \omega, v_{n} )$)
   such that
 \be\label{pcomv1_1}
  v(s, \tau, \omega, v_{n} ) \to  {\widetilde{v}}  (s)
  \ \mbox{    in } \   L^2(\o_k)   
  \quad \mbox{    for  all }
 \    s \in [\tau,  \tau +T]\setminus I 
 \text{ and }   k   \in \N .
  \ee
  Note that $I$ has measure zero
  and   $t\in (\tau, \tau +T]$, which along with 
   \eqref{pcomv1_1}  completes  the proof.
 \end{proof}

  Based on Lemma \ref{exiv},  we  can
  define a continuous cocycle for
  problem  \eqref{seq1}-\eqref{seq2}
  in $\ltwo$.    
Let  $\Phi: \R^+ \times \R \times \Omega \times \ltwo$
$\to \ltwo$ be  a mapping  given by,
for every  $t \in \R^+$,  $\tau \in 
\R$, $\omega \in \Omega$  and $u_\tau \in \ltwo$,
 \be \label{pcycle}
 \Phi (t, \tau,  \omega, u_\tau)  
 =
  v(t+\tau, \tau,  \theta_{ -\tau} \omega,  v_\tau)
  + \eps h(x)   z(\theta_t \omega),
\ee 
where  $v$  is the solution of
system \eqref{veq1}-\eqref{veq2}
with initial condition $v_\tau =    u_\tau - \eps h(x) z(\omega) $
at initial time $\tau$. 
Note that \eqref{uv}  and \eqref{pcycle}
imply 
\be\label{pcycle2}
\Phi (t, \tau,  \omega, u_\tau)  
 =
  u(t+\tau, \tau,  \theta_{ -\tau} \omega,  u_\tau),
  \ee
  where $u$ is a solution of \eqref{seq1}-\eqref{seq2}
  in  some sense.
  Since   the solution $v$ of \eqref{veq1}-\eqref{veq2}
  is $(\calf, \calb(\ltwo))$-measurable in $\omega$
  and continuous in initial data in $\ltwo$, we find that
  $\Phi (t, \tau,  \omega, u_\tau)  $ given by \eqref{pcycle} 
  is also $(\calf, \calb(\ltwo))$-measurable in $\omega$
  and continuous in $u_\tau$ in $\ltwo$.
  In fact,  one can verify  that $\Phi$
  is a continuous cocycle  on $\ltwo$
over 
$(\Omega, \calf, P,  \{\theta\}_{t \in \R})$
in the sense of Definition \eqref{cocycle_definition}.
    Note that   the cocycle property (iii) of $\Phi$ can be easily proved
    by \eqref{pcycle} and the properties of the solution $v$
    of the pathwise deterministic equation \eqref{veq1}-\eqref{veq2}.
    Our goal is to   establish
     the existence of random    attractors  of $\Phi$
     with 
   an appropriate  attraction domain.
   To specify such an attraction domain, we  consider
    a family  
  $D =\{ D(\tau, \omega) \subseteq \ltwo: \tau \in \R, \omega \in \Omega \}$ 
  of   
  bounded nonempty    sets  such that
  for every $\tau \in \R$   and $\omega \in \Omega$, 
 \be
 \label{Dom1}
 \lim_{s \to  - \infty} e^{ {\frac 54} \lambda s
 + 2\alpha  \int^0_s   \eta (\theta_r \omega) dr }
  \| D( \tau + s, \theta_{s} \omega ) \|^2 =0,
\ee   where 
   $\| S \|=  \sup\limits_{u \in S}
   \| u\|_{\ltwo }$ for a nonempty bounded  subset $S$ of $\ltwo$.
  In  the sequel,   we will   use  $\cald$
   to denote the collection of all families 
   with property \eqref{Dom1}:
 \be
 \label{Dom2}
\cald  = \{ 
   D =\{ D(\tau, \omega) \subseteq \ltwo:
    \tau \in \R, \omega \in \Omega \}: \ 
 D  \ \mbox{satisfies} \  \eqref{Dom1} \} .
\ee
We   will construct a $\cald$-pullback attractor
$\cala  = \{\cala (\tau, \omega): \tau \in \R,
  \omega \in \Omega \} \in  \cald$ 
  for  $\Phi$ in $\ltwo$ in the sense of Definition \eqref{attractor_definition}.
 
 We  will apply  the result of Proposition \eqref{att} from 
   \cite{wan5} to show  the existence of 
   $\cald$-pullback attractors   for $\Phi$.
  Similar results on existence of random attractors
    can be found in 
\cite{bat1, car6,  cra2, fla1,  gess2, schm1}. 
   
We remark that  the $\calf$-measurability of the
   attractor   $ {\mathcal{A}}$  was  given   in  
\cite{wan7} and the  measurability of $ {\mathcal{A}}$
   with respect to 
   the $P$-completion
 of $\calf$   was given  in \cite{wan5}.
For our purpose,   we 
further assume  the following condition
on $g$, $\psi_1$  and $\psi_3$:
for every $ \tau \in \R$, 
 \be\label{g1}
\int_{-\infty}^\tau e^{\lambda s}
\left (
\| g(s, \cdot) \|^2 + \| \psi_1 (s, \cdot) \|_{L^1({\R^n})}
+ \| \psi_3 (s, \cdot) \|_{L^{q_1}({\R^n})}^{q_1}
\right ) ds < \infty.
\ee

\section{Uniform Estimates of Solutions}
 \setcounter{equation}{0}
 
 This section is devoted to 
  uniform estimates of  solutions   of 
  \eqref{seq1}  and \eqref{veq1}
  which are needed for proving the
  existence of random attractors for $\Phi$.
  When deriving uniform estimates,  
  the following     positive number
  $\alpha_0$ is useful:
 \be
 \label{alphazero}
\alpha_0 = \frac{1}{8(1+\abs{E(\eta)})}\lambda.
\ee

\begin{lemma}
\label{est1}
Let $\alpha_0$ be the  positive number given by \eqref{alphazero}.
 Suppose  \eqref{f1}-\eqref{f3}  and \eqref{g1} hold.
Then for every  $\alpha \le \alpha_0$,
 $\sigma \in \R$,
 $\tau \in \R$, $\omega \in \Omega$   and $D=\{D(\tau, \omega)
: \tau \in \R,  \omega \in \Omega\}  \in \cald$,
 there exists  $T=T(\tau, \omega,  D, \sigma, \alpha)>0$ such that 
 for all $t \ge T$,  the solution
 $v$ of  problem  \eqref{veq1}-\eqref{veq2}     satisfies 
 $$
\norm{v(\sigma, \tau-t,\theta_{-\tau}\omega,v_{\tau-t})}^2 
$$
$$
+
\int_{\tau-t}^{\sigma}e^{\frac{5}{4}\lambda(s-\sigma)
-2\alpha\int_{\sigma-\tau}^{s-\tau}
\eta(\theta_{r}\omega)dr}
\norm{v(s, \tau-t,\theta_{-\tau}\omega,v_{\tau-t})}^2ds 
$$
$$
+
\int_{\tau-t}^{\sigma}e^{\frac{5}{4}
\lambda(s-\sigma)-2\alpha\int_{\sigma-\tau}
^{s-\tau}\eta(\theta_{r}\omega)dr}
$$
$$
\times\norm{\nabla (v(s, \tau-t,\theta_{-\tau}\omega,v_{\tau-t})
+\eps h(x)z(\theta_{s-\tau} \omega))}^p_pds 
$$
$$
+
\int_{\tau-t}^{\sigma}e^{\frac{5}{4}
\lambda(s-\sigma)-2\alpha\int_{\sigma-\tau}
^{s-\tau}\eta(\theta_{r}\omega)dr}
$$
$$
\times\norm{v(s, \tau-t,\theta_{-\tau}\omega,
v_{\tau-t})+\eps h(x)z(\theta_{s-\tau} \omega))}^q_qds
\le M,
$$
where $v_{\tau -t}  \in D(\tau -t, \theta_{ -t} \omega)$
  and
  $M=M(\tau, \omega, \sigma, \alpha, \eps) $ is given by
$$
M
= c\int_{-\infty}^{\sigma-\tau}e^{\frac{5}{4}
\lambda(s-\sigma+\tau)-2\alpha\int_{\sigma-\tau}^{s}
\eta(\theta_{r}\omega)dr} 
$$
$$
\times \left ( \abs{\eps z(\theta_{s} \omega)}^p 
+ \abs{\eps z(\theta_{s} \omega)}^q 
+ \abs{\alpha \eps \eta(\theta_{s}\omega)z(\theta_{s}\omega )
}^2 \right )ds
$$
$$
+ c\int_{-\infty}^{\sigma-\tau}e^{\frac{5}{4}
\lambda(s-\sigma+\tau)-2\alpha
\int_{\sigma-\tau}^{s}\eta(\theta_{r}\omega)dr}
$$
$$
\times(\norm{g(s+\tau)}^2 
+ \norm{\psi_1(s+\tau)}_{1}
+\norm{\psi_3(s+\tau)}_{q_1}^{q_1})ds,
$$
with 
  $c$  being  a  positive constant  
   independent of $\tau$, $\omega$,  $D$,
  $\alpha$ and $\eps$.
\end{lemma}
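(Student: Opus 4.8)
The plan is to derive a differential energy inequality for $\norm{v(\cdot,\tau,\omega,v_\tau)}^2$, multiply it by an exponential integrating factor, integrate over the pullback interval $[\tau-t,\sigma]$ after the substitution $(\tau,\omega,v_\tau)\mapsto(\tau-t,\theta_{-\tau}\omega,v_{\tau-t})$, and then check that the contribution of the initial data tends to $0$ as $t\to\infty$ while what remains is precisely (a multiple of) $M$, which is finite thanks to \eqref{g1} and the temperedness of $z$ and $\eta$.

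First I would take the energy identity \eqref{enlem1} from Lemma \ref{exiv} and bound its right-hand side term by term exactly as in the proof of Lemma \ref{lemvk}. Young's inequality applied to the term $2\eps\zto\ii\abs{\nabla(v+\eps h\zto)}^{p-2}\nabla(v+\eps h\zto)\cdot\nabla h\,dx$ absorbs (most of) $\norm{\nabla(v+\eps h\zto)}_p^p$ at the cost of a term of order $\abs{\eps\zto}^p$; hypotheses \eqref{f1}--\eqref{f2} bound $2\ii f(t,x,v+\eps h\zto)v\,dx$ by $-\gamma\norm{v+\eps h\zto}_q^q$ plus $\norm{\psi_1(t)}_1$, $\norm{\psi_3(t)}_{q_1}^{q_1}$ and a term of order $\abs{\eps\zto}^q$; and Young's inequality on $2(g(t),v)+2\alpha\eps\eta(\theta_t\omega)z(\theta_t\omega)(h,v)$ produces $\tfrac14\lambda\norm{v}^2$ together with $c\norm{g(t)}^2$ and $c\abs{\alpha\eps\eta(\theta_t\omega)z(\theta_t\omega)}^2$. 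Writing $\tfrac74\lambda-2\alpha\eta(\theta_t\omega)=\bigl(\tfrac54\lambda-2\alpha\eta(\theta_t\omega)\bigr)+\tfrac12\lambda$ and keeping the $\tfrac12\lambda\norm{v}^2$ summand on the left, this produces, for a.e. $t\ge\tau$,
\be
\frac{d}{dt}\norm{v}^2+\Bigl(\frac54\lambda-2\alpha\eta(\theta_t\omega)\Bigr)\norm{v}^2+\frac12\lambda\norm{v}^2+\norm{\nabla(v+\eps h\zto)}_p^p+\gamma\norm{v+\eps h\zto}_q^q\le R(t,\omega),
\ee
where $R(t,\omega)=c\bigl(\abs{\eps\zto}^p+\abs{\eps\zto}^q+\abs{\alpha\eps\eta(\theta_t\omega)z(\theta_t\omega)}^2\bigr)+c\bigl(\norm{g(t)}^2+\norm{\psi_1(t)}_1+\norm{\psi_3(t)}_{q_1}^{q_1}\bigr)$ and $c$ is independent of $\tau,\omega,D,\alpha,\eps$.

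Next I would replace $(\tau,\omega,v_\tau)$ by $(\tau-t,\theta_{-\tau}\omega,v_{\tau-t})$ with $v_{\tau-t}\in D(\tau-t,\theta_{-t}\omega)$ (so that $\eta(\theta_s\theta_{-\tau}\omega)=\eta(\theta_{s-\tau}\omega)$ and likewise for $z$), multiply the inequality by $e^{\frac54\lambda s-2\alpha\int_{-\tau}^{s-\tau}\eta(\theta_r\omega)\,dr}$, integrate in $s$ over $[\tau-t,\sigma]$, and divide by the value of this factor at $s=\sigma$. Since the resulting weight is exactly $e^{\frac54\lambda(s-\sigma)-2\alpha\int_{\sigma-\tau}^{s-\tau}\eta(\theta_r\omega)\,dr}$, the four quantities appearing on the left of the asserted inequality are reproduced (each with a fixed positive constant in front) and are bounded above by
\be
e^{\frac54\lambda(\tau-t-\sigma)-2\alpha\int_{\sigma-\tau}^{-t}\eta(\theta_r\omega)\,dr}\norm{v_{\tau-t}}^2+\int_{\tau-t}^{\sigma}e^{\frac54\lambda(s-\sigma)-2\alpha\int_{\sigma-\tau}^{s-\tau}\eta(\theta_r\omega)\,dr}\,R(s,\theta_{-\tau}\omega)\,ds.
\ee
For the first term I would write its exponent as $\bigl(\tfrac54\lambda(\tau-\sigma)-2\alpha\int_{\sigma-\tau}^{0}\eta(\theta_r\omega)\,dr\bigr)+\bigl(-\tfrac54\lambda t+2\alpha\int_{-t}^{0}\eta(\theta_r\omega)\,dr\bigr)$, use $\norm{v_{\tau-t}}^2\le\norm{D(\tau-t,\theta_{-t}\omega)}^2$, and invoke \eqref{Dom1} to conclude this term tends to $0$ as $t\to\infty$; hence there is $T=T(\tau,\omega,D,\sigma,\alpha)>0$ such that for $t\ge T$ it is dominated by $M$. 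For the second term I would enlarge the lower limit to $-\infty$ and substitute $s\mapsto s+\tau$, which turns it into exactly the expression defining $M$. Its finiteness is where $\alpha_0$ of \eqref{alphazero} enters: since $\tfrac1t\int_0^t\eta(\theta_r\omega)\,dr\to E(\eta)$, the exponent $\tfrac54\lambda(s+\tau-\sigma)-2\alpha\int_{\sigma-\tau}^{s}\eta(\theta_r\omega)\,dr$ is, as $s\to-\infty$, asymptotically $\bigl(\tfrac54\lambda-2\alpha E(\eta)\bigr)s$, which tends to $-\infty$ because $\alpha\le\alpha_0$ forces $\tfrac54\lambda-2\alpha E(\eta)\ge\lambda>0$; this makes the $z$-terms integrable (they grow subexponentially, $\lim_{s\to\pm\infty}\abs{z(\theta_s\omega)}/\abs{s}=0$), and it makes the exponent comparable to $e^{\lambda(s+\tau)}$ against the $g$-, $\psi_1$- and $\psi_3$-terms --- the ratio tending to $0$ precisely because $\tfrac14\lambda-2\alpha E(\eta)>0$, which is exactly what the choice $\alpha_0=\tfrac{\lambda}{8(1+\abs{E(\eta)})}$ secures --- so \eqref{g1} yields convergence.

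The main obstacle is this book-keeping with the random exponential weights: one must verify that the specific threshold $\alpha_0$ makes both $\tfrac14\lambda-2\alpha E(\eta)$ (needed for $M<\infty$ via \eqref{g1}) and $\tfrac54\lambda-2\alpha E(\eta)$ (needed for the initial term to vanish via \eqref{Dom1}) strictly positive, combining the subexponential growth of $z(\theta_t\omega)$ with the ergodic asymptotics of $\int_0^t\eta(\theta_r\omega)\,dr$. Everything else is a routine repetition of the computations already carried out for the bounded-domain problem in Lemma \ref{lemvk}, now performed directly on $\R^n$ by means of the energy identity \eqref{enlem1}.
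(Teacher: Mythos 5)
Your proposal is correct and follows essentially the same route as the paper: derive the pointwise energy inequality from \eqref{enlem1} exactly as in the proof of \eqref{plvk1_6}, multiply by the integrating factor $e^{\frac54\lambda t-2\alpha\int_0^t\eta(\theta_r\omega)dr}$, integrate over $[\tau-t,\sigma]$, replace $\omega$ by $\theta_{-\tau}\omega$, kill the initial-data term via \eqref{Dom1}, and justify finiteness of $M$ through the ergodicity of $\eta$, the temperedness of $z$, \eqref{alphazero} and \eqref{g1}. Your explicit verification that $\alpha\le\alpha_0$ makes both $\tfrac54\lambda-2\alpha E(\eta)$ and $\tfrac14\lambda-2\alpha E(\eta)$ positive is exactly the computation the paper leaves to the reader.
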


\begin{proof}  
Using energy equation \eqref{enlem1}
and 
following the proof of \eqref{plvk1_6},
 we  obtain
  \be
  \label{pest1_1}
\frac{d}{dt}\norm{v}^2 
+ {\frac{7}{4}}  \lambda \norm{v}^2 
+ \ii \abs{\nabla (v+\eps hz(\theta_{t} \omega))}^p_p  dx
+ \gamma
\ii \abs{v+\eps h z(\theta_{t} \omega))}^q_q dx
$$
$$
\leq 2\alpha\eta(\theta_t\omega)\norm{v}^2 
+ c_3 \left (  \abs{\eps z(\theta_{t} \omega)}^p 
+   \abs{\eps z(\theta_{t} \omega)}^q 
+     \abs{\alpha \eps \eta(\theta_t\omega)z(\theta_t\omega) }^2
\right )
$$
$$
 + c_4 \left (
 \norm{g(t)}^2\
 + \norm{\psi_1(t)}_{1}+   \norm{\psi_3(t)}_{q_1}^{q_1}
 \right ) .
\ee
Multiplying \eqref{pest1_1} 
by $e^{\frac{5}{4}\lambda t-2\alpha\int^{t}_{0}
\eta(\theta_{r}\omega)dr}$, and
then  integrating from $\tau -t$ to $\sigma$
with $\sigma \ge \tau -t$, 
we get,  
 \be \label{pest1_2}
\norm{v(\sigma, \tau-t,\omega,v_{\tau-t})}^2
 + \frac{\lambda}{2} \int_{\tau-t}^{\sigma}
 e^{\frac{5}{4}\lambda(s-\sigma)
 -2\alpha\int_{\sigma}^{s}
 \eta(\theta_{r}\omega)dr}
 \norm{v(s, \tau-t,\omega,v_{\tau-t})}^2ds
 $$
 $$
+ \int_{\tau-t}^{\sigma}e^{\frac{5}{4}
\lambda(s-\sigma)-2\alpha\int_{\sigma}^{s}
\eta(\theta_{r}\omega)dr}
\norm{\nabla (v(s, \tau-t,\omega,v_{\tau-t})
+\eps hz(\theta_{s} \omega))}^p_pds
$$
$$
+ \gamma\int_{\tau-t}^{\sigma}
e^{\frac{5}{4}\lambda(s-\sigma)
-2\alpha\int_{\sigma}^{s}\eta(\theta_{r}\omega)dr}
\norm{v(s, \tau-t,\omega,v_{\tau-t})
+\eps h z(\theta_{s} \omega) }^q_qds
$$
$$
\leq e^{\frac{5}{4}\lambda(\tau-t-\sigma)
-2\alpha\int_{\sigma}^{\tau-t}\eta(\theta_{r}\omega)dr}
\norm{v_{\tau-t}}^2  
$$
$$
+ c_3 \int_{\tau-t}^{\sigma}e^{\frac{5}{4}
\lambda(s-\sigma)-2\alpha\int_{\sigma}^{s}
\eta(\theta_{r}\omega)dr}( \abs{\eps z(\theta_{s} \omega)}^p
 +  \abs{\eps z(\theta_{s} \omega)}^q 
 +  \abs{\alpha \eps \eta(\theta_{s}\omega)z(\theta_s\omega)}^2
 )ds
 $$
 $$
+ c_4 \int_{\tau-t}^{\sigma}
e^{\frac{5}{4}\lambda(s-\sigma)
-2\alpha\int_{\sigma}^{s}\eta(\theta_{r}\omega)dr}
 \left (
 \norm{g(s)}^2
 + 
 \norm{\psi_1(s)}_{1}+\norm{\psi_3(s)}_{q_1}^{q_1} 
 \right )
 ds.
\ee
 Replacing $\omega$ with $\theta_{-\tau}\omega$
  in \eqref{pest1_2},  we get
 \be
 \label{pest1_4}
\norm{v(\sigma, \tau-t,\theta_{-\tau}\omega,v_{\tau-t})}^2 
$$
$$
+ \frac{\lambda}{2} \int_{\tau-t}^{\sigma}
e^{\frac{5}{4}\lambda(s-\sigma)
- 2\alpha\int_{\sigma-\tau}^{s-\tau}\eta(\theta_{r}\omega)dr}
\norm{v(s, \tau-t,\theta_{-\tau}\omega,v_{\tau-t})}^2ds
$$
$$
+ \int_{\tau-t}^{\sigma}e^{\frac{5}{4}\lambda(s-\sigma)
-2\alpha\int_{\sigma-\tau}^{s-\tau}\eta(\theta_{r}\omega)dr}
\norm{\nabla (v(s, \tau-t,\theta_{-\tau}\omega,v_{\tau-t})
+\eps h z(\theta_{s-\tau} \omega))}^p_pds
$$
$$
+ \gamma\int_{\tau-t}^{\sigma}e^{\frac{5}{4}
\lambda(s-\sigma)-2\alpha\int_{\sigma-\tau}^{s-\tau}
\eta(\theta_{r}\omega)dr}
\norm{v(s, \tau-t,\theta_{-\tau}\omega,v_{\tau-t})
+\eps h z(\theta_{s-\tau} \omega)}^q_qds
$$
$$
\leq e^{\frac{5}{4}\lambda
 (\tau -t -\sigma)+2\alpha\int^{\sigma -\tau}_{-t}
\eta(\theta_{r}\omega)dr}  \norm{v_{\tau-t}}^2
$$
$$
+ c_3 \int_{-t}^{\sigma-\tau}e^{\frac{5}{4}
\lambda(s +\tau -\sigma)-2\alpha\int_{\sigma -\tau}^{s}
\eta(\theta_{r}\omega)dr}
$$
$$
\times( \abs{\eps z(\theta_{s} \omega)}^p
 +  \abs{\eps z(\theta_{s} \omega)}^q 
 +  \abs{\alpha \eps \eta(\theta_{s}\omega)z(\theta_s\omega)}^2
 )ds
 $$
 $$
+ c_4 
 \int_{-t}^{\sigma-\tau}e^{\frac{5}{4}
\lambda(s +\tau -\sigma)-2\alpha\int_{\sigma -\tau}^{s}
\eta(\theta_{r}\omega)dr}
$$
$$
\times \left (
 \norm{g(s+\tau)}^2
 + 
 \norm{\psi_1(s+\tau)}_{1}+\norm{\psi_3(s+\tau)}_{q_1}^{q_1} 
 \right )
 ds.
 \ee
 By the ergodicity of $\eta$, \eqref{alphazero}  and \eqref{g1}
 one can verify  that   for all $\alpha \le \alpha_0$, 
 \be\label{pest1_7}
  \int_{-\infty}^{\sigma-\tau}e^{\frac{5}{4}
\lambda(s +\tau -\sigma)-2\alpha\int_{\sigma -\tau}^{s}
\eta(\theta_{r}\omega)dr}
$$
$$
\times \left (
 \norm{g(s+\tau)}^2
 + 
 \norm{\psi_1(s+\tau)}_{1}+\norm{\psi_3(s+\tau)}_{q_1}^{q_1} 
 \right )
 ds <\infty.
 \ee
 Similarly, by the temperedness of $\eta$  and  $z$, we can prove  that
 for all  $\alpha \le \alpha_0$,
 \be\label{pest1_9}
 \int_{-\infty}^{\sigma-\tau}e^{\frac{5}{4}
\lambda(s +\tau -\sigma)-2\alpha\int_{\sigma -\tau}^{s}
\eta(\theta_{r}\omega)dr}
$$
$$
\times( \abs{\eps z(\theta_{s} \omega)}^p
 +  \abs{\eps z(\theta_{s} \omega)}^q 
 +  \abs{\alpha \eps \eta(\theta_{s}\omega)z(\theta_s\omega)}^2
 )ds <\infty.
 \ee
 Since $v_{\tau -t} \in    D(\tau-t, \theta_{-t}\omega)$ 
 and $D \in \cald$, by \eqref{Dom1}-\eqref{Dom2} we obtain
 $$
 e^{\frac{5}{4}\lambda
 (\tau -t -\sigma)+2\alpha\int^{\sigma -\tau}_{-t}
\eta(\theta_{r}\omega)dr}  \norm{v_{\tau-t}}^2
 $$
  $$
 \le e^{\frac{5}{4}\lambda
 (\tau  -\sigma)+2\alpha\int^{\sigma -\tau}_{0}\eta(\theta_{r}\omega)dr}
 e^{- \frac{5}{4}\lambda t 
 +2\alpha\int^{0}_{-t}
\eta(\theta_{r}\omega)dr}
  \norm{D(\tau -t, \theta_{-t} \omega)}^2
  \to 0,
 $$
 as $t \to \infty$.
 Therefore, 
  there exists $T = T(\tau, \omega,  D, \sigma, \alpha)>0$ 
  such that for all $t \geq T$, 
  $$
 e^{\frac{5}{4}\lambda
 (\tau -t -\sigma)+2\alpha\int^{\sigma -\tau}_{-t}
\eta(\theta_{r}\omega)dr}  \norm{v_{\tau-t}}^2
$$
$$
\le
 \int_{-\infty}^{\sigma-\tau}e^{\frac{5}{4}
\lambda(s +\tau -\sigma)-2\alpha\int_{\sigma -\tau}^{s}
\eta(\theta_{r}\omega)dr}
$$
$$
\times \left (
 \norm{g(s+\tau)}^2
 + 
 \norm{\psi_1(s+\tau)}_{1}+\norm{\psi_3(s+\tau)}_{q_1}^{q_1} 
 \right )
 ds ,
 $$
 which along with
 \eqref{pest1_4}-\eqref{pest1_9}
 concludes    the proof.
  \end{proof}

By   Lemma \ref{est1},  we  obtain  the following estimates.

\begin{lemma}
\label{est2} 
 Suppose  \eqref{f1}-\eqref{f3}  and \eqref{g1} hold.
Then for every $\alpha \le \alpha_0$, 
 $\tau \in \R$, $\omega \in \Omega$   and $D=\{D(\tau, \omega)
: \tau \in \R,  \omega \in \Omega\}  \in \cald$,
 there exists  $T=T(\tau, \omega,  D, \alpha)>0$ such that 
 for all $t \ge T$  and ,  the solution
 $v$ of  problem  \eqref{veq1}-\eqref{veq2}     satisfies 
 $$
\norm{v(\tau, \tau-t,\theta_{-\tau}\omega,v_{\tau-t})}^2 
$$
$$
+
\int_{\tau-t}^{\tau}e^{\frac{5}{4}\lambda(s-\tau)
-2\alpha\int_{0}^{s-\tau}
\eta(\theta_{r}\omega)dr}
\norm{v(s, \tau-t,\theta_{-\tau}\omega,v_{\tau-t})}^2ds 
$$
$$
+
\int_{\tau-t}^{\tau}e^{\frac{5}{4}
\lambda(s-\tau)-2\alpha\int_{0}
^{s-\tau}\eta(\theta_{r}\omega)dr}
\norm{\nabla (v(s, \tau-t,\theta_{-\tau}\omega,v_{\tau-t})
+\eps h(x)z(\theta_{s-\tau} \omega))}^p_pds 
$$
\be\label{est2_1}
+
\int_{\tau-t}^{\tau}e^{\frac{5}{4}
\lambda(s-\tau)-2\alpha\int_{0}
^{s-\tau}\eta(\theta_{r}\omega)dr}
\norm{v(s, \tau-t,\theta_{-\tau}\omega,
v_{\tau-t})+\eps h(x)z(\theta_{s-\tau} \omega))}^q_qds
$$
$$
\le R(\tau, \omega, \alpha, \eps),
\ee
where $v_{\tau -t}  \in D(\tau -t, \theta_{ -t} \omega)$
  and
  $R(\tau, \omega,  \alpha, \eps) $ is given by
$$
 R(\tau, \omega,  \alpha, \eps)
$$
$$
= c\int_{-\infty}^{0}e^{\frac{5}{4}
\lambda s -2\alpha\int_{0}^{s}
\eta(\theta_{r}\omega)dr} 
 \left ( \abs{\eps z(\theta_{s} \omega)}^p 
+ \abs{\eps z(\theta_{s} \omega)}^q 
+ \abs{\alpha \eps \eta(\theta_{s}\omega)z(\theta_{s}\omega )
}^2 \right )ds
$$
\be\label{est2_2}
+c\int_{-\infty}^{0}e^{\frac{5}{4}
\lambda s -2\alpha\int_{0}^{s}
\eta(\theta_{r}\omega)dr}  
(\norm{g(s+\tau)}^2 
+ \norm{\psi_1(s+\tau)}_{1}
+\norm{\psi_3(s+\tau)}_{q_1}^{q_1})ds,
\ee
 with 
  $c$  being  a  positive constant   independent of $\tau$, $\omega$,  $D$,
  $\alpha$ and $\eps$.
  In addition,   we have
   \be\label{est2_3}
  \lim_{t \to \infty}
  e^{-{\frac 54} \lambda t + 2 \alpha \int_{-t}^0 \eta (\theta_r\omega) dr}  
    R(\tau- t, \theta_{-t} \omega, \alpha, \eps) =0.
  \ee
\end{lemma}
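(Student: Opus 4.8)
The estimate \eqref{est2_1}--\eqref{est2_2} is simply Lemma~\ref{est1} read off with the choice $\sigma=\tau$. With $\sigma=\tau$ the weight $e^{\frac54\lambda(s-\sigma)-2\alpha\int_{\sigma-\tau}^{s-\tau}\eta(\theta_r\omega)dr}$ appearing in Lemma~\ref{est1} collapses to $e^{\frac54\lambda(s-\tau)-2\alpha\int_0^{s-\tau}\eta(\theta_r\omega)dr}$, so the four terms on the left of the conclusion of Lemma~\ref{est1} become exactly those on the left of \eqref{est2_1}; and on the right, since $\int_{-\infty}^{\sigma-\tau}=\int_{-\infty}^{0}$ and $e^{\frac54\lambda(s-\sigma+\tau)-2\alpha\int_{\sigma-\tau}^{s}\eta(\theta_r\omega)dr}=e^{\frac54\lambda s-2\alpha\int_0^{s}\eta(\theta_r\omega)dr}$ when $\sigma=\tau$, the bound $M(\tau,\omega,\tau,\alpha,\eps)$ is precisely the right-hand side $R(\tau,\omega,\alpha,\eps)$ displayed in \eqref{est2_2}. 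So for this part I would just invoke Lemma~\ref{est1}.

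It remains to prove the temperedness statement \eqref{est2_3}. It is convenient first to rewrite $R$ using $-\int_0^{s}\eta(\theta_r\omega)dr=\int_{s}^{0}\eta(\theta_r\omega)dr$, so that
$$
R(\tau,\omega,\alpha,\eps)=c\int_{-\infty}^{0}e^{\frac54\lambda s+2\alpha\int_{s}^{0}\eta(\theta_r\omega)dr}\,\Xi(s,\omega,\tau)\,ds,
$$
where $\Xi(s,\omega,\tau)=\abs{\eps z(\theta_s\omega)}^p+\abs{\eps z(\theta_s\omega)}^q+\abs{\alpha\eps\eta(\theta_s\omega)z(\theta_s\omega)}^2+\norm{g(s+\tau)}^2+\norm{\psi_1(s+\tau)}_{1}+\norm{\psi_3(s+\tau)}_{q_1}^{q_1}$ collects both the noise and the forcing contributions. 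Replacing $(\tau,\omega)$ by $(\tau-t,\theta_{-t}\omega)$ and using the flow identity $\theta_r\theta_{-t}=\theta_{r-t}$ (hence $z(\theta_s\theta_{-t}\omega)=z(\theta_{s-t}\omega)$ and $\eta(\theta_s\theta_{-t}\omega)=\eta(\theta_{s-t}\omega)$), together with the substitution $r\mapsto r-t$ inside the inner integral, one obtains $\Xi(s,\theta_{-t}\omega,\tau-t)=\Xi(s-t,\omega,\tau)$ and $\int_{s}^{0}\eta(\theta_r\theta_{-t}\omega)dr=\int_{s-t}^{-t}\eta(\theta_r\omega)dr$, i.e.
$$
R(\tau-t,\theta_{-t}\omega,\alpha,\eps)=c\int_{-\infty}^{0}e^{\frac54\lambda s+2\alpha\int_{s-t}^{-t}\eta(\theta_r\omega)dr}\,\Xi(s-t,\omega,\tau)\,ds.
$$

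Now I multiply by the prefactor $e^{-\frac54\lambda t+2\alpha\int_{-t}^{0}\eta(\theta_r\omega)dr}$ and combine the exponents via $\int_{s-t}^{-t}\eta(\theta_r\omega)dr+\int_{-t}^{0}\eta(\theta_r\omega)dr=\int_{s-t}^{0}\eta(\theta_r\omega)dr$; after the change of variable $s'=s-t$ this yields
$$
e^{-\frac54\lambda t+2\alpha\int_{-t}^{0}\eta(\theta_r\omega)dr}\,R(\tau-t,\theta_{-t}\omega,\alpha,\eps)=c\int_{-\infty}^{-t}e^{\frac54\lambda s'+2\alpha\int_{s'}^{0}\eta(\theta_r\omega)dr}\,\Xi(s',\omega,\tau)\,ds',
$$
which is exactly the tail $\int_{-\infty}^{-t}$ of the integral defining $R(\tau,\omega,\alpha,\eps)$. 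That this integral is finite for every $\alpha\le\alpha_0$ is precisely what \eqref{pest1_7} and \eqref{pest1_9} assert with $\sigma=\tau$, relying on the temperedness of $\eta$ and $z$, the choice \eqref{alphazero} of $\alpha_0$, and hypothesis \eqref{g1}. A finite integral has vanishing tail, so the right-hand side tends to $0$ as $t\to\infty$, which is \eqref{est2_3}.

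The only point requiring a little care is the algebra of the middle paragraph: keeping straight which argument the shift $\theta_{-t}$ acts on, and checking that the external prefactor combines with the internal weight so that, after $s'=s-t$, exactly the weight $e^{\frac54\lambda s'+2\alpha\int_{s'}^{0}\eta(\theta_r\omega)dr}$ and integrand $\Xi(s',\omega,\tau)$ of the original $R(\tau,\omega,\alpha,\eps)$ reappear. Apart from that bookkeeping, both assertions of the lemma are essentially restatements of facts already obtained along the way to Lemma~\ref{est1}.
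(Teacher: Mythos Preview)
Your proposal is correct and follows essentially the same approach as the paper's own proof: both parts invoke Lemma~\ref{est1} with $\sigma=\tau$ for the estimate, then compute $R(\tau-t,\theta_{-t}\omega,\alpha,\eps)$ via the shift and a change of variable to recognize the prefactor-weighted expression as the tail $\int_{-\infty}^{-t}$ of the convergent integral defining $R(\tau,\omega,\alpha,\eps)$. Your write-up is in fact slightly more explicit about the bookkeeping (the flow identity and the combination of the inner integrals) than the paper's.
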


\begin{proof}
\eqref{est2_1}  and \eqref{est2_2}
are special cases  of  Lemma \ref{est1}
for   $\sigma =\tau$.  We now  prove \eqref{est2_3}.
 By  \eqref{est2_2}  we  have
$$  R(\tau- t, \theta_{-t} \omega, \alpha, \eps)
$$
$$
 =
  c\int_{-\infty}^{0}e^{\frac{5}{4}
\lambda s -2\alpha\int_{0}^{s}
\eta(\theta_{r-t}\omega)dr} 
$$
$$
\times \left ( \abs{\eps z(\theta_{s-t} \omega)}^p 
+ \abs{\eps z(\theta_{s-t} \omega)}^q 
+ \abs{\alpha \eps \eta(\theta_{s-t}\omega)z(\theta_{s-t}\omega )
}^2 \right )ds
$$
$$
+c\int_{-\infty}^{0}e^{\frac{5}{4}
\lambda s -2\alpha\int_{0}^{s}
\eta(\theta_{r-t}\omega)dr}  
$$
$$
\times(\norm{g(s+\tau -t)}^2 
+ \norm{\psi_1(s+\tau-t)}_{1}
+\norm{\psi_3(s+\tau-t)}_{q_1}^{q_1})ds
$$
$$
 =
  c\int_{-\infty}^{-t}e^{\frac{5}{4}
\lambda (t+s) -2\alpha\int_{-t}^{s}
\eta(\theta_{r}\omega)dr} 
$$
$$
\times \left ( \abs{\eps z(\theta_{s} \omega)}^p 
+ \abs{\eps z(\theta_{s} \omega)}^q 
+ \abs{\alpha \eps \eta(\theta_{s}\omega)z(\theta_{s}\omega )
}^2 \right )ds
$$
$$
+c\int_{-\infty}^{-t}e^{\frac{5}{4}
\lambda (t+s) -2\alpha\int_{-t}^{s}
\eta(\theta_{r}\omega)dr}  
$$
$$
\times(\norm{g(s+\tau )}^2 
+ \norm{\psi_1(s+\tau)}_{1}
+\norm{\psi_3(s+\tau)}_{q_1}^{q_1})ds.
$$
Therefore we get
$$
  e^{-{\frac 54} \lambda t + 2 \alpha \int_{-t}^0 \eta (\theta_r\omega) dr}  
    R(\tau- t, \theta_{-t} \omega, \alpha, \eps) 
    $$
    $$
 =
  c\int_{-\infty}^{-t}e^{\frac{5}{4}
\lambda s  -2\alpha\int_{0}^{s}
\eta(\theta_{r}\omega)dr} 
 \left ( \abs{\eps z(\theta_{s} \omega)}^p 
+ \abs{\eps z(\theta_{s} \omega)}^q 
+ \abs{\alpha \eps \eta(\theta_{s}\omega)z(\theta_{s}\omega )
}^2 \right )ds
$$
\be\label{pest2_1}
+c\int_{-\infty}^{-t}e^{\frac{5}{4}
\lambda  s  -2\alpha\int_{0}^{s}
\eta(\theta_{r}\omega)dr}  
(\norm{g(s+\tau )}^2 
+ \norm{\psi_1(s+\tau)}_{1}
+\norm{\psi_3(s+\tau)}_{q_1}^{q_1})ds.
\ee\sloppy
Since the integrals in \eqref{est2_2} are convergent, 
by \eqref{pest2_1}
we obtain 
$ e^{-{\frac 54} \lambda t + 2 \alpha \int_{-t}^0 \eta (\theta_r\omega) dr}  
    R(\tau- t, \theta_{-t} \omega, \alpha, \eps) 
    \to 0$  as $t \to \infty$.
    This completes   the proof.
\end{proof}

Next,  we derive uniform estimates on the
tails of 
 solutions of \eqref{veq1}-\eqref{veq2}
 outside a bounded domain. These estimates  are
 crucial for proving the asymptotic compactness of solutions
 on unbounded domains.

 \begin{lemma}
 \label{est3}
Suppose \eqref{f1}-\eqref{f3} and \eqref{g1} hold. 
Then for every $\nu > 0$,  $\alpha \le \alpha_0$,
$\eps>0$, 
 $\tau \in \R$,
$\omega \in \Omega$  and  $D\in \cald$, 
there exists $T = T(\tau, \omega, D,  \alpha, \eps,  \nu) > 0$
 and $K = K(\tau, \omega,  \alpha, \eps,  \nu) \geq 1$
  such that for all $t \geq T$
  and $\sigma \in [\tau -1, \tau]$, 
  the solution $v$ of  
  \eqref{veq1}-\eqref{veq2}   satisfies
$$
\int_{\abs{x}\geq K}
\abs{v(\sigma,  \tau-t,\theta_{-\tau}\omega,v_{\tau-t})}^2dx
 \leq \nu,
$$
where $v_{\tau -t} \in D(\tau -t, \theta_{-t} \omega )$.
In addition,  $  T(\tau, \omega, D,  \alpha, \eps,  \nu) $
and $  K(\tau, \omega, D,  \alpha, \eps,  \nu) $
are  uniform  with respect to  $\eps \in (0,1]  $.
\end{lemma}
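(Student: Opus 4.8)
The plan is to apply the spatial cut-off (``tail'') technique, as in \cite{wan1}. Fix a smooth $\rho:\R^+\to[0,1]$ with $\rho\equiv 0$ on $[0,1]$, $\rho\equiv 1$ on $[2,\infty)$ and $|\rho'|\le c_0$, and for $k\in\N$ put $\rho_k(x)=\rho(|x|^2/k^2)$; then $\rho_k$ vanishes on $\o_k$, equals $1$ for $|x|\ge\sqrt 2\,k$, and $\nabla\rho_k$ is supported in the annulus $\{k\le|x|\le\sqrt 2\,k\}$ with $|\nabla\rho_k|\le c/k$. First I would multiply \eqref{veq1} by $\rho_k v$ and integrate over $\R^n$ (which the regularity in Definition \ref{defv} permits), so that the dissipative side contributes $\tfrac12\tfrac{d}{dt}\ii\rho_k|v|^2\,dx+\lambda\ii\rho_k|v|^2\,dx$.

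Next I would estimate the remaining terms just as in the proof of Lemma \ref{lemvk}, now localized by $\rho_k$. Integration by parts in the $p$-Laplace term produces $\ge\tfrac12\ii\rho_k|\nabla(v+\eps h\zto)|^p\,dx$ minus a tail error $c|\eps\zto|^p\int_{|x|\ge k}|\nabla h|^p\,dx$ and the ``annulus'' term $\ii|\nabla(v+\eps h\zto)|^{p-2}\nabla(v+\eps h\zto)\cdot(\nabla\rho_k)\,v\,dx$, which by H\"older's and Young's inequalities is bounded by $\tfrac{c}{k}\big(\norm{\nabla(v+\eps h\zto)}^p_p+\norm{v}^p_p\big)$, with $\norm{v}^p_p$ in turn controlled by $\norm{v}^2+\norm{v}^q_q$ via the interpolation \eqref{p_inequality}. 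The nonlinearity is treated via \eqref{f1}--\eqref{f2} as in \eqref{plvk1_3}, giving an upper bound $-\tfrac{\gamma}{2}\ii\rho_k|v+\eps h\zto|^q\,dx+\int_{|x|\ge k}\psi_1(t,x)\,dx+\norm{\psi_3(t)}^{q_1}_{L^{q_1}(|x|\ge k)}+c|\eps\zto|^q\int_{|x|\ge k}|h|^q\,dx$, and the forcing terms by Young's inequality as in \eqref{plvk1_4}. Collecting everything and discarding the favorable localized terms on the left, I obtain a differential inequality of the form
\[
\frac{d}{dt}\ii\rho_k|v|^2\,dx+\Big(\tfrac74\lambda-2\alpha\eta(\theta_t\omega)\Big)\ii\rho_k|v|^2\,dx
\]
\[
\le\frac{c}{k}\Big(\norm{\nabla(v+\eps h\zto)}^p_p+\norm{v}^2+\norm{v}^q_q\Big)+c\,\delta_k\,\Psi(t,\omega)+c\,G_k(t),
\]
where $\delta_k=\int_{|x|\ge k}\big(|h|^2+|\nabla h|^p+|h|^q\big)\,dx\to0$ as $k\to\infty$, $\Psi(t,\omega)=|\eps\zto|^p+|\eps\zto|^q+|\alpha\eps\eta(\theta_t\omega)\zto|^2$, $G_k(t)=\norm{g(t)}^2_{L^2(|x|\ge k)}+\norm{\psi_1(t)}_{L^1(|x|\ge k)}+\norm{\psi_3(t)}^{q_1}_{L^{q_1}(|x|\ge k)}$, and $c$ is independent of $\tau,\omega,D,\alpha,\eps,k$.

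To finish I would multiply by $e^{\frac54\lambda t-2\alpha\int_0^t\eta(\theta_r\omega)\,dr}$, integrate from $\tau-t$ to $\sigma\in[\tau-1,\tau]$, and replace $\omega$ by $\theta_{-\tau}\omega$, exactly as in Lemma \ref{est1}. The initial-data contribution tends to $0$ as $t\to\infty$ by \eqref{Dom1}; the $\tfrac{c}{k}$-term, after integration against the weight, is dominated by $\tfrac{c}{k}$ times $R(\tau,\omega,\alpha,\eps)$ plus a finite temperedness integral, hence $<\nu/4$ once $k\ge K$, using the absorbing estimates of Lemma \ref{est1}/\ref{est2} (uniformly for $\sigma$ in the compact interval $[\tau-1,\tau]$, since the bound $M$ there is bounded uniformly over such $\sigma$); the $\delta_k\Psi$-term is bounded by $c\,\delta_k\int_{-\infty}^0 e^{\frac54\lambda s-2\alpha\int_0^s\eta(\theta_r\omega)\,dr}\Psi(s+\tau,\omega)\,ds$, which is finite by temperedness of $z$ and $\eta$ (as in \eqref{pest1_9}) and $\to0$ as $k\to\infty$; and the $G_k$-term is bounded by $c\int_{-\infty}^0 e^{\frac54\lambda s-2\alpha\int_0^s\eta(\theta_r\omega)\,dr}G_k(s+\tau)\,ds$, which $\to0$ as $k\to\infty$ by the dominated convergence theorem and the integrability \eqref{g1}. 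Choosing $K=K(\tau,\omega,\alpha,\eps,\nu)$ so that all $k\ge K$ contributions are $<\nu/4$, and then $T=T(\tau,\omega,D,\alpha,\eps,\nu)$ so the initial-data piece is $<\nu/4$ for $t\ge T$, yields $\int_{|x|\ge K}|v(\sigma,\tau-t,\theta_{-\tau}\omega,v_{\tau-t})|^2\,dx\le\nu$. Since $\eps$ enters all bounds only through $\eps\le1$ (e.g.\ $|\eps\zto|^p\le|\zto|^p$), both $T$ and $K$ are uniform over $\eps\in(0,1]$.

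The main obstacle is the annulus term $\ii|\nabla(v+\eps h\zto)|^{p-2}\nabla(v+\eps h\zto)\cdot(\nabla\rho_k)\,v\,dx$: since $|\nabla(v+\eps h\zto)|^{p-1}$ lies only in $L^{p_1}$, it cannot be bounded pointwise in time, and one must extract a genuine $1/k$ factor and then absorb it using the \emph{time-integrated} $L^p$-gradient bound provided by Lemma \ref{est1} (which is precisely why that lemma retains the gradient term under the integral sign). The remaining ingredients --- the interpolation \eqref{p_inequality}, the tail decay of $h\in H^2({\R^n})\bigcap W^{2,q}({\R^n})$, temperedness of the noise, and dominated convergence applied to \eqref{g1} --- are routine.
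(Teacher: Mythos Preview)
Your proposal is correct and follows essentially the same route as the paper: the same cut-off $\rho(|x|^2/k^2)$, the same treatment of the $p$-Laplace, nonlinearity, and forcing terms, the same integrating factor $e^{\frac54\lambda t-2\alpha\int_0^t\eta(\theta_r\omega)\,dr}$ followed by replacing $\omega$ by $\theta_{-\tau}\omega$, and the same use of Lemma~\ref{est1}/\ref{est2} to absorb the $c/k$ annulus contribution via the time-integrated gradient and $L^q$ bounds. The only cosmetic difference is that the paper controls $\norm{v}^p_p$ by first splitting off $\eps h z(\theta_t\omega)$ and then interpolating $\norm{v+\eps h z}^p_p$ (see \eqref{pest3_30}), whereas you interpolate $\norm{v}^p_p$ directly and then split in $L^2$ and $L^q$; both are equivalent.
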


\begin{proof}
Let $\rho$ be a smooth
 function defined on $\R^{+}$ 
 such that $0\leq \rho(s) \leq 1$ for
  all $s \in \R^{+}$, and 
\begin{equation*} 
   \rho(s) = \left\{
     \begin{array}{ll}
       0  &  \text{ for } 0\leq s \leq 1;\\
       1  &  \text{ for } s \geq 2.
     \end{array}
   \right.
\end{equation*} 
 Multiplying \eqref{veq1}
  by $\rho(\frac{\abs{x}^2}{k^2})v$ and 
  then integrating over $\R^n$ we  get
 \be \label{pest3_1}
\frac{1}{2}\frac{d}{dt}\int_{\R^n}
 \rho(\frac{\abs{x}^2}{k^2})\abs{v}^2dx 
$$
$$
 - \int_{\R^n}
 \rho(\frac{\abs{x}^2}{k^2})\text{div}
 (\abs{\nabla (v+\eps h z(\theta_{t} \omega))}
 ^{p-2}\nabla (v+\eps h z(\theta_{t} \omega)))vdx
 $$
 $$
= ( \alpha\eta(\theta_t\omega)- \lambda )
 \ii \rho(\frac{\abs{x}^2}{k^2})\abs{v}^2dx
+ \int_{R^n}\rho(\frac{\abs{x}^2}{k^2})
 f(t,x,v+\eps h z(\theta_{t} \omega)) v dx
 $$ $$
 + \alpha \eps \eta (\theta_t\omega) z(\theta_t\omega) 
  \ii \rho(\frac{\abs{x}^2}{k^2}) hvdx 
+ \ii \rho(\frac{\abs{x}^2}{k^2})g(t,x)vdx.
\ee
For  the term involving the divergence  we have
 \be \label{pest3_3}
\int_{\R^n}\rho(\frac{\abs{x}^2}{k^2})
\text{div}(\abs{\nabla (v+\eps hz(\theta_{t} \omega))}
^{p-2}\nabla (v+\eps hz(\theta_{t} \omega)))vdx
$$
$$
= -\int_{\R^n}\rho(\frac{\abs{x}^2}{k^2})
\abs{\nabla (v+\eps hz(\theta_{t} \omega))}^{p}dx 
$$
$$
+ \int_{\R^n}\rho(\frac{\abs{x}^2}{k^2})
\abs{\nabla (v+\eps hz(\theta_{t} \omega))}
^{p-2} \nabla (v+\eps hz(\theta_{t} \omega))
\cdot\nabla (\eps hz(\theta_{t} \omega )) dx 
$$
$$
-\int_{\R^n}\rho'(\frac{\abs{x}^2}{k^2}) \frac{2x}{k^2}
\cdot\nabla (v+\eps hz(\theta_{t} \omega)) 
\abs{\nabla (v+\eps hz(\theta_{t} \omega))}^{p-2}vdx 
$$
$$
\leq 
- {\frac 12} \int_{\R^n}\rho(\frac{\abs{x}^2}{k^2})
\abs{\nabla (v+\eps hz(\theta_{t} \omega))}^{p}dx 
+c_1 \int_{\R^n}\rho(\frac{\abs{x}^2}{k^2})
\abs{\nabla(\eps hz(\theta_{t} \omega))}^{p}dx
$$
$$
-\int_{k \leq \abs{x} \leq 2k}
\rho'(\frac{\abs{x}^2}{k^2})
 \frac{2x}{k^2}\cdot\nabla 
(v+\eps hz(\theta_{t} \omega)) 
\abs{\nabla (v+\eps hz(\theta_{t}
 \omega))}^{p-2}vdx 
 $$
 $$
\leq c_1 \int_{\R^n}\rho(\frac{\abs{x}^2}
{k^2})\abs{\nabla(\eps hz(\theta_{t} \omega))}
^{p}dx + \frac{c_2}{k}(\norm{v}
^p_p+\norm{\nabla(v
+\eps hz(\theta_{t} \omega))}^p_p).
\ee
As in \eqref{plvk1_3},   for the nonlinearity $f$   we have
  \be 
  \label{pest3_5}
\int_{R^n}\rho(\frac{\abs{x}^2}{k^2})
f(t,x,v+\eps h z(\theta_{t} \omega))vdx 
 $$
 $$
\leq \int_{R^n}\rho(\frac{\abs{x}^2}{k^2})
f(t,x,v+\eps h z(\theta_{t} \omega))
(v+\eps h z(\theta_{t} \omega))dx 
$$
$$
+ \int_{\R^n}\rho(\frac{\abs{x}^2}{k^2})
\abs{f(t,x,v+\eps h z(\theta_{t} \omega))} \;
\abs{\eps h z(\theta_{t} \omega)}dx
$$
$$
\leq -\gamma\int_{\R^n}\rho(\frac{\abs{x}^2}
{k^2})\abs{v+\eps h z(\theta_{t} \omega))}^q dx 
+ \int_{\R^n}\rho(\frac{\abs{x}^2}{k^2})\psi_1(t,x)dx
$$
$$
 +
\int_{\R^n}\rho(\frac{\abs{x}^2}{k^2})
\psi_2(t,x)\abs{v+\eps h z(\theta_{t} \omega))}
^{q-1}\abs{\eps h z(\theta_{t} \omega)}dx 
$$
$$
+ \int_{\R^n}\rho(\frac{\abs{x}^2}{k^2})
\abs{\psi_3(t,x)\eps h z(\theta_{t} \omega)}dx
$$
$$
\leq -\frac{\gamma}{2}\int_{\R^n}
\rho(\frac{\abs{x}^2}{k^2})
\abs{v+\eps h z(\theta_{t} \omega))}^q dx
$$
$$
 + \int_{\R^n}\rho(\frac{\abs{x}^2}{k^2})
 (\abs{\psi_1(t,x)} + \abs{\psi_3(t,x)}^{q_1})dx 
 + c_3 \int_{\R^n}\rho(\frac{\abs{x}^2}{k^2})
 \abs{\eps h z(\theta_{t} \omega)}^q dx.
\ee
Note that
\be 
\label{pest3_7}
\alpha \eps \eta(\theta_t\omega)z(\theta_t\omega)
\int_{R^n}\rho(\frac{\abs{x}^2}{k^2})
hvdx + \int_{R^n}\rho(\frac{\abs{x}^2}{k^2})g(t,x)vdx
$$
$$
\le
  c_4 \int_{R^n}\rho(\frac{\abs{x}^2}{k^2})
  \abs{\alpha \eps \eta(\theta_t\omega)
  z(\theta_t\omega) h} ^2 dx
  $$
  $$
+ c_5
\int_{R^n}\rho(\frac{\abs{x}^2}{k^2})
\abs{g (t,x)}^2dx 
+ \frac{3}{8}\lambda\int_{R^n}
\rho(\frac{\abs{x}^2}{k^2})\abs{v}^2dx.
\ee
It follows   from \eqref{pest3_1}-\eqref{pest3_7} that 
 \be
 \label{pest3_11} 
\frac{d}{dt}\int \rho(\frac{\abs{x}^2}{k^2})\abs{v}^2dx
 + 
   (
 \frac{5}{4}\lambda -2\alpha \eta  (\theta_t\omega)
   )
 \int \rho(\frac{\abs{x}^2}{k^2})
 \abs{v}^2dx 
 $$
 $$
 \leq  
    \frac{c_7 }{k}(\norm{v}^p_p
  +\norm{\nabla(v+\eps hz(\theta_{t} \omega))}^p_p)
  $$
  $$
+ c_7 \int_{\R^n}\rho(\frac{\abs{x}^2}{k^2})
( \abs{g(t,x)}^2+ \abs{\psi_1(t,x)} + \abs{\psi_3(t,x)}^{q_1})dx
$$
$$
 + c_7 \int_{\R^n}\rho(\frac{\abs{x}^2}{k^2})
 (
  \abs{\nabla \eps h z(\theta_{t} \omega)}^p 
  +
 \abs{\eps h z(\theta_{t} \omega)}^q 
 +
 \abs{\alpha \eps \eta  (\theta_{t} \omega)  z(\theta_{t} \omega) h}^2 
 )dx. 
\ee
 Since $h \in H^1(\R^n)
 \bigcap 
  W^{1,q}(\R^n)$
  with $2\le p\le q$, 
  we  find  that for every $\nu > 0$, there exists a $
  K_1=K_1(\nu) \ge 1$ such that for all $k  \ge  K_1$,
 \be
 \label{pest3_13}
c_7 \int_{\R^n}\rho(\frac{\abs{x}^2}{k^2})
 (
  \abs{\nabla \eps h z(\theta_{t} \omega)}^p 
  +
 \abs{\eps h z(\theta_{t} \omega)}^q 
 +
 \abs{\alpha \eps \eta  (\theta_{t} \omega)  z(\theta_{t} \omega) h}^2
 )dx
 $$
 $$
 =c_7 
 \int_{\abs{x} \geq k}
 \rho(\frac{\abs{x}^2}{k^2})
 (
  \abs{\nabla \eps h z(\theta_{t} \omega)}^p 
  +
 \abs{\eps h z(\theta_{t} \omega)}^q 
 +
 \abs{\alpha \eps \eta  (\theta_{t} \omega)  z(\theta_{t} \omega) h}^2
 )dx
 $$
 $$
  \leq \nu
  \left (
    \abs{\eps z(\theta_t\omega)}^p
    + \abs{\eps z(\theta_t\omega)}^q
    + \abs{\alpha \eps \eta  (\theta_{t} \omega)  z(\theta_{t} \omega)  }^2
    \right ).
\ee
By \eqref{pest3_11}-\eqref{pest3_13} we find that
there exists $K_2 =K_2(\nu) \ge K_1$ such that
for all $k \ge K_2$, 
\be
 \label{pest3_15} 
\frac{d}{dt}\int \rho(\frac{\abs{x}^2}{k^2})\abs{v}^2dx
 + 
   (
 \frac{5}{4}\lambda -2\alpha \eta  (\theta_t\omega)
   )
 \int \rho(\frac{\abs{x}^2}{k^2})
 \abs{v}^2dx 
 $$
 $$
 \leq  
    \nu (\norm{v}^p_p
  +\norm{\nabla(v+\eps hz(\theta_{t} \omega))}^p_p)
  $$
  $$
+ c_7 \int_{\abs{x} \ge k} 
( \abs{g(t,x)}^2+ \abs{\psi_1(t,x)} + \abs{\psi_3(t,x)}^{q_1})dx
$$
$$
 + \nu
  \left (
    \abs{\eps z(\theta_t\omega)}^p
    + \abs{\eps z(\theta_t\omega)}^q
    + \abs{\alpha \eps \eta  (\theta_{t} \omega)  z(\theta_{t} \omega)  }^2
    \right ).
\ee
Multiplying \eqref{pest3_15} 
by $e^{\frac{5}{4}\lambda t-2\alpha\int^{t}
_{0}\eta(\theta_{r}\omega)dr}$, 
and integrating from $\tau -t$ to $\sigma$
with $\sigma \ge \tau -t$,
 we get 
 \be
 \label{pest3_17}
\int_{\R^n} \rho(\frac{\abs{x}^2}{k^2})
\abs{v(\sigma, \tau-t,\omega,v_{\tau-t})}^2dx
$$
$$
 \leq e^{\frac{5}{4} \lambda (\tau -t -\sigma) 
 -2\alpha\int_{\sigma}^{\tau-t}
 \eta(\theta_{r}\omega)dr}
 \int_{\R^n} \rho(\frac{\abs{x}^2}{k^2})
 \abs{v_{\tau-t}}^2dx
 $$
 $$
+ \nu \int_{\tau-t}^{\sigma}
e^{\frac{5}{4}\lambda(s- \sigma)
-2\alpha\int_{ \sigma }^{s}
\eta(\theta_{r}\omega)dr}
(\norm{v(s, \tau-t,\omega,v_{\tau-t})}
^p_p+\norm{\nabla(v 
+\eps hz(\theta_{s} \omega))}^p_p)ds
$$
$$
+ \nu \int_{\tau-t}^{ \sigma }
e^{\frac{5}{4}\lambda(s- \sigma  )
-2\alpha\int_{\sigma }^{s}\eta(\theta_{r}\omega)dr}
(
\abs{\eps z(\theta_s\omega)}^p +
\abs{\eps z(\theta_s\omega)}^q 
+ \abs{\alpha \eps \eta(\theta_s\omega)
z(\theta_s\omega)}^2 
)ds
$$
$$
+ c_7 \int_{\tau-t}^{ \sigma  }\int_{\abs{x} \ge k }
e^{\frac{5}{4}\lambda(s- \sigma  )
-2\alpha\int_{\sigma   }^{s}\eta(\theta_{r}\omega)
dr}(\abs{g(s,x)}^2 +\abs{\psi_1(s,x)} 
+ \abs{\psi_3(s,x)}^{q_1}
 )dxds.
\ee
 Replacing $\omega$ with
  $\theta_{-\tau}\omega$ in \eqref{pest3_17},
  after simple calculations,  we  get
  for all $k\ge K_2$  and $\sigma \in [\tau -1, \tau]$,
 \be
 \label{pest3_21}
\int_{\R^n} \rho(\frac{\abs{x}^2}{k^2})
\abs{v( \sigma , \tau-t,\theta_{-\tau}\omega,v_{\tau-t})}^2dx
 \leq e^{\frac{5}{4}\lambda (\tau -t -\sigma)
 +2\alpha\int^{\sigma -\tau}_{-t}
 \eta(\theta_{r}\omega)dr}  
 \norm{v_{\tau-t}}^2 
 $$
 $$
 + \nu \int_{\tau-t}^{ \sigma }
e^{\frac{5}{4}\lambda(s- \sigma )
-2\alpha\int_{\sigma -\tau}^{s-\tau}
\eta(\theta_{r}\omega)dr}
$$
$$
\times(\norm{v(s, \tau-t,  \theta_{-\tau} \omega,v_{\tau-t})}
^p_p+\norm{\nabla(v 
+\eps hz(\theta_{s-\tau} \omega))}^p_p)ds
$$
  $$
+ \nu \int_{-\infty}^{\sigma -\tau}e^{\frac{5}{4}\lambda 
(s +\tau -\sigma) 
 - 2\alpha\int^{s}_{\sigma -\tau}\eta(\theta_{r}\omega)dr}
$$
$$
\times (
 \abs{\eps z(\theta_s\omega)}^p
 +
 \abs{\eps z(\theta_s\omega)}^q 
 + \abs{\alpha \eps \eta(\theta_s\omega)
 z(\theta_s\omega)}^2 
 )ds
 $$
 $$
+ c_7 \int_{-\infty}^{\sigma -\tau}\int_{\abs{x} \ge k}
e^{\frac{5}{4}\lambda  (s +\tau -\sigma) 
-2\alpha\int^{s}_{\sigma -\tau}\eta(\theta_{r}\omega)dr}
$$
$$
\times(\abs{g(s+\tau,x)}^2 
+\abs{\psi_1(s+\tau,x)}
+ \abs{\psi_3(s+\tau,x)}^{q_1}
 )dxds
 $$
 $$
  \leq 
 e^{\frac{5}{4}\lambda  
+2\alpha\int_{-1}^{0}
\abs{\eta(\theta_{r}\omega)}dr}
 e^{-\frac{5}{4}\lambda t
+2\alpha\int^{0}_{-t}\eta(\theta_{r}\omega)dr}  
 \norm{v_{\tau-t}}^2 
 $$
 $$
 + \nu e^{\frac{5}{4}\lambda  
+2\alpha\int_{-1}^{0}
\abs{\eta(\theta_{r}\omega)}dr}
  \int_{\tau-t}^{ \tau }
e^{\frac{5}{4}\lambda(s- \tau )
-2\alpha\int_{0}^{s-\tau}
\eta(\theta_{r}\omega)dr}
$$
$$
\times(\norm{v }
^p_p+\norm{\nabla(v 
+\eps hz(\theta_{s-\tau} \omega))}^p_p)ds
$$
  $$
+ \nu
 e^{\frac{5}{4}\lambda  
+2\alpha\int_{-1}^{0}
\abs{\eta(\theta_{r}\omega)}dr}
  \int_{-\infty}^{0}e^{\frac{5}{4}\lambda s
 - 2\alpha\int^{s}_{0}\eta(\theta_{r}\omega)dr}
$$
$$
\times(
 \abs{\eps z(\theta_s\omega)}^p
 +
 \abs{\eps z(\theta_s\omega)}^q 
 + \abs{\alpha \eps \eta(\theta_s\omega)
 z(\theta_s\omega)}^2 
 )ds
 $$
 $$
+ c_8 \int_{-\infty}^{0}\int_{\abs{x} \ge k}
e^{\frac{5}{4}\lambda s
-2\alpha\int^{s}_{0}\eta(\theta_{r}\omega)dr}
$$
$$
\times(\abs{g(s+\tau,x)}^2 
+\abs{\psi_1(s+\tau,x)}
+ \abs{\psi_3(s+\tau,x)}^{q_1}
 )dxds.
\ee
 Since $v_{\tau-t} \in D(\tau-t,\theta_{-\tau}\omega)$, 
 we see that for every $\nu > 0$, 
 $\tau \in \R$,
 $\omega \in \Omega$
 and $\alpha>0$,
 there exists a $T_1(\tau,\omega,D, \alpha, \nu) > 0$ 
 such that for every $t \ge T_1$ and $\sigma \in [\tau-1, \tau]$,
 \be\label{pest3_23}
  e^{\frac{5}{4}\lambda  
+2\alpha\int_{-1}^{0}
\abs{\eta(\theta_{r}\omega)}dr}
 e^{-\frac{5}{4}\lambda t
+2\alpha\int^{0}_{-t}\eta(\theta_{r}\omega)dr}  
 \norm{v_{\tau-t}}^2 
 $$
 $$ \le
 e^{\frac{5}{4}\lambda  
+2\alpha\int_{-1}^{0}
\abs{\eta(\theta_{r}\omega)}dr}
 e^{-\frac{5}{4}\lambda t
+2\alpha\int^{0}_{-t}\eta(\theta_{r}\omega)dr}  
 \norm{D({\tau-t}, \theta_{-t} \omega)}^2 
  \leq \nu .
\ee\sloppy
Since  $\int_{-\infty}^{0}\ii
e^{\frac{5}{4}\lambda s
-2\alpha\int^{s}_{0}\eta(\theta_{r}\omega)dr}
(\abs{g(s+\tau,x)}^2 
+\abs{\psi_1(s+\tau,x)}
+ \abs{\psi_3(s+\tau,x)}^{q_1}
 )dxds$
 is convergent, we have
$$
 \int_{-\infty}^{0}\int_{\abs{x} \ge k}
e^{\frac{5}{4}\lambda s
-2\alpha\int^{s}_{0}\eta(\theta_{r}\omega)dr}
$$
$$
\times(\abs{g(s+\tau,x)}^2 
+\abs{\psi_1(s+\tau,x)}
+ \abs{\psi_3(s+\tau,x)}^{q_1}
 )dxds \to 0,
$$
 as $k \to \infty$. 
 Therefore,  there exists
 $K=K_3(\tau, \omega, \alpha, \nu) \ge K_2$
 such that  for all $k \ge K_3$,
 \be\label{pest3_25}
c_8  \int_{-\infty}^{0}\int_{\abs{x} \ge k}
e^{\frac{5}{4}\lambda s
-2\alpha\int^{s}_{0}\eta(\theta_{r}\omega)dr}
$$
$$
\times(\abs{g(s+\tau,x)}^2 
+\abs{\psi_1(s+\tau,x)}
+ \abs{\psi_3(s+\tau,x)}^{q_1}
 )dxds  \le \nu.
 \ee
 Note that
 $$
  \norm{v(s, \tau-t,  \theta_{-\tau} \omega,v_{\tau-t})}
^p_p
$$
$$
\le
2^{p}
\left (\norm{v(s, \tau-t,  \theta_{-\tau} \omega,v_{\tau-t})
+ \eps h z(\theta_{s-\tau}  \omega) }
^p_p
+ 
 \norm{ 
  \eps h z(\theta_{s-\tau} \omega) }
^p_p
\right ),
$$\sloppy
which along with
\eqref{p_inequality} and
 Lemma \ref{est2} shows that
 there exists $T_2
 = T_2(\tau, \omega, D, \alpha,  \nu) \ge T_1$
 such that  for all $t \ge T_2$,
 $$
   \int_{\tau-t}^{\tau}
e^{\frac{5}{4}\lambda(s-\tau)
-2\alpha\int_{0}^{s-\tau}
\eta(\theta_{r}\omega)dr}
(\norm{v(s, \tau-t,  \theta_{-\tau} \omega,v_{\tau-t})}
^p_p+\norm{\nabla(v 
+\eps hz(\theta_{s-\tau} \omega))}^p_p)ds
$$
\be\label{pest3_30}
\le  
c_9 R(\tau, \omega, \alpha, \eps)
+    c_{10} 
 \int_{-\infty}^{0}e^{\frac{5}{4}\lambda s
 - 2\alpha\int^{s}_{0}\eta(\theta_{r}\omega)dr}
 \abs{\eps z(\theta_s\omega)}^p ds, 
\ee
where 
$R(\tau, \omega, \alpha, \eps)$
is  the number given by \eqref{est2_2}.
It follows  from \eqref{pest3_21}-\eqref{pest3_30}
that for all $k\ge K_3$,   $t \ge T_2$
and $\sigma \in [\tau -1, \tau]$,
\be
 \label{pest3_32}
\int_{\R^n} \rho(\frac{\abs{x}^2}{k^2})
\abs{v(\sigma, \tau-t,\theta_{-\tau}\omega,v_{\tau-t})}^2dx
 \le 2\nu
 +  \nu
c_{11} R(\tau, \omega, \alpha, \eps)
$$
$$
+ \nu c_{12}
    \int_{-\infty}^{0}e^{\frac{5}{4}\lambda s
 - 2\alpha\int^{s}_{0}\eta(\theta_{r}\omega)dr}
 (
 \abs{\eps z(\theta_s\omega)}^p
 +
 \abs{\eps z(\theta_s\omega)}^q 
 + \abs{\alpha \eps \eta(\theta_s\omega)
 z(\theta_s\omega)}^2 
 )ds.
\ee
Note that  $\rho(\frac{\abs{x}^2}{k^2})= 1$
when $\abs{x}^2 \ge 2 k^2$. This along with
\eqref{pest3_32}  concludes    the proof.
   \end{proof}

The asymptotic compactness of solutions of equation  \eqref{veq1}
is  given  below.

\begin{lemma}
\label{asyv}
Suppose  \eqref{f1}-\eqref{f3}  and \eqref{g1} hold.
Then for every $\alpha \le \alpha_0$,
$\eps >0$,
 $\tau \in \R$, $\omega \in \Omega$   and $D=\{D(\tau, \omega)
: \tau \in \R,  \omega \in \Omega\}  \in \cald$,
 the sequence
 $v(\tau, \tau -t_n,  \theta_{-\tau} \omega,   v_{0,n}  ) $   has a
   convergent
subsequence in $\ltwo $ provided
  $t_n \to \infty$ and 
 $ v_{0,n}  \in D(\tau -t_n, \theta_{ -t_n} \omega )$.
 \end{lemma}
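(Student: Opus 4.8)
The strategy is to prove $\cald$-pullback asymptotic compactness of $v$ by combining the uniform estimates of Lemmas~\ref{est1}--\ref{est2}, the tail estimate of Lemma~\ref{est3}, the compactness in bounded domains of Lemma~\ref{comv1}, and an energy-equation argument based on \eqref{enlem1}. Fix $\tau$, $\omega$, $D\in\cald$, $\alpha\le\alpha_0$, $\eps>0$, $t_n\to\infty$ and $v_{0,n}\in D(\tau-t_n,\theta_{-t_n}\omega)$, and write $v_n(s)=v(s,\tau-t_n,\theta_{-\tau}\omega,v_{0,n})$. By Lemma~\ref{est2} the sequence $\{v_n(\tau)\}$ is bounded in $\ltwo$, so along a subsequence $v_n(\tau)\rightharpoonup\xi$ weakly in $\ltwo$. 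Since weak convergence together with $\limsup_n\norm{v_n(\tau)}\le\norm{\xi}$ forces strong convergence, and since for each $\nu>0$ Lemma~\ref{est3} gives $K\ge1$ with $\int_{\abs{x}\ge K}\abs{v_n(\tau)}^2\,dx\le\nu$ for all large $n$, it suffices to show that, along a further subsequence, $v_n(\tau)\to\xi$ strongly in $L^2(\o_k)$ for every fixed $k$: this, the tail bound, and weak lower semicontinuity yield $\limsup_n\norm{v_n(\tau)-\xi}_{\ltwo}^2\le 4\nu$, and $\nu$ is arbitrary.

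To establish the strong convergence in bounded domains I would restart the equation at time $\tau-1$. By the uniqueness part of Lemma~\ref{exiv} the solution operator has the semigroup property in time, so $v_n(s)=v(s,\tau-1,\theta_{-\tau}\omega,v_n(\tau-1))$ for $s\in[\tau-1,\tau]$, while Lemma~\ref{est1} with $\sigma=\tau-1$ shows $\{v_n(\tau-1)\}$ is bounded in $\ltwo$ for large $n$. Lemma~\ref{comv1} (with initial time $\tau-1$, $\omega$ replaced by $\theta_{-\tau}\omega$, final time $\tau$, and initial data $v_n(\tau-1)$) then yields, along a subsequence, a function $v_0\in L^2(\tau-1,\tau;\ltwo)$ with $v_n(s)\to v_0(s)$ in $L^2(\o_k)$ for a.e.\ $s\in(\tau-1,\tau)$ and every $k$. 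Because Lemma~\ref{est3} provides its tail bound \emph{uniformly for $\sigma\in[\tau-1,\tau]$} and $\{v_n(s)\}$ is bounded in $\ltwo$ uniformly in $s\in[\tau-1,\tau]$ by Lemma~\ref{est1}, this upgrades to $v_n(s)\to v_0(s)$ strongly in $\ltwo$ for a.e.\ $s\in(\tau-1,\tau)$, and a further subsequence converges to $v_0$ a.e.\ on $(\tau-1,\tau)\times\R^n$. Arguing exactly as in the proof of Lemma~\ref{exiv} (weak limits in the reflexive spaces, strong $L^2(\o_k)$ convergence, monotonicity of $A$, and \eqref{f1}--\eqref{f2}) one identifies $v_0$ as a solution of \eqref{veq1} on $(\tau-1,\tau)$; hence $v_0\in C([\tau-1,\tau];\ltwo)$, it satisfies energy equation \eqref{enlem1} for a.e.\ $t$, and (as with the endpoint values in the proof of Lemma~\ref{exiv}) $v_n(\tau)\rightharpoonup v_0(\tau)$ weakly in $\ltwo$, so we may take $\xi=v_0(\tau)$.

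The last step is the energy argument. Write $w_n(t)=v_n(t)+\eps h\,z(\theta_{t-\tau}\omega)$, $w_0(t)=v_0(t)+\eps h\,z(\theta_{t-\tau}\omega)$, and $\Gamma(t)=2\int_0^t\!\big(\alpha\eta(\theta_{r-\tau}\omega)-\lambda\big)dr$. Multiplying \eqref{enlem1} (with $\omega$ replaced by $\theta_{-\tau}\omega$) by the positive factor $e^{-\Gamma(t)}$ and integrating from a time $s\in(\tau-1,\tau)$ at which $v_n(s)\to v_0(s)$ in $\ltwo$ up to $\tau$, one obtains an identity expressing $e^{-\Gamma(\tau)}\norm{v_n(\tau)}^2$ as $e^{-\Gamma(s)}\norm{v_n(s)}^2$ plus the integral over $(s,\tau)$, weighted by $e^{-\Gamma(r)}$, of $-2\norm{\nabla w_n}_p^p+2\eps z\ii\abs{\nabla w_n}^{p-2}\nabla w_n\cdot\nabla h\,dx+2\ii f(r,x,w_n)v_n\,dx+2(g(r),v_n)+2\alpha\eps\eta z(h,v_n)$. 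Passing to $\limsup_n$: the term $e^{-\Gamma(s)}\norm{v_n(s)}^2\to e^{-\Gamma(s)}\norm{v_0(s)}^2$; weak lower semicontinuity gives $\liminf_n\int_s^\tau e^{-\Gamma(r)}\norm{\nabla w_n(r)}_p^p\,dr\ge\int_s^\tau e^{-\Gamma(r)}\norm{\nabla w_0(r)}_p^p\,dr$; the three lower-order terms converge, using $v_n\to v_0$ strongly in $L^2(s,\tau;\ltwo)$, $A(w_n)\rightharpoonup A(w_0)$ weakly in $L^{p_1}(s,\tau;(W^{1,p})^*)$, and $f(\cdot,w_n)\rightharpoonup f(\cdot,w_0)$ weakly in $L^{q_1}(s,\tau;L^{q_1})$ (the weak limits identified as in Lemma~\ref{exiv}); and, splitting $f(\cdot,w_n)v_n=f(\cdot,w_n)w_n-\eps z\,f(\cdot,w_n)h$, the reverse Fatou lemma applied to the nonpositive quantity $f(\cdot,w_n)w_n+\gamma\abs{w_n}^q-\psi_1$ (using $w_n\to w_0$ a.e.\ and \eqref{f1}) together with weak lower semicontinuity of $v\mapsto\int_s^\tau e^{-\Gamma(r)}\norm{v(r)}_q^q\,dr$ gives $\limsup_n\int_s^\tau\! e^{-\Gamma(r)}\ii f(\cdot,w_n)w_n\,dx\,dr\le\int_s^\tau\! e^{-\Gamma(r)}\ii f(\cdot,w_0)w_0\,dx\,dr$. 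The resulting bound on $\limsup_n e^{-\Gamma(\tau)}\norm{v_n(\tau)}^2$ is exactly the right-hand side of the same identity written for $v_0$, which by \eqref{enlem1} equals $e^{-\Gamma(\tau)}\norm{v_0(\tau)}^2$; hence $\limsup_n\norm{v_n(\tau)}^2\le\norm{v_0(\tau)}^2=\norm{\xi}^2$, and with $v_n(\tau)\rightharpoonup\xi$ this yields $v_n(\tau)\to\xi$ strongly in $\ltwo$.

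The hardest part is this last paragraph. One must (i) certify that the limit $v_0$ genuinely solves \eqref{veq1} and hence satisfies \eqref{enlem1} --- the Minty--Browder identification of the $p$-Laplace and reaction terms, carried out as in Lemma~\ref{exiv} --- and (ii) defeat the non-compactness of the Sobolev embeddings by marrying the a.e.-in-time strong convergence of Lemma~\ref{comv1} with the tail estimate of Lemma~\ref{est3}, uniform for $\sigma\in[\tau-1,\tau]$, which is precisely why that lemma was proved on the whole interval rather than only at $\sigma=\tau$. Within the energy passage, the reaction term $\int f(\cdot,w_n)w_n$, controlled through \eqref{f1} and reverse Fatou, is the delicate point.
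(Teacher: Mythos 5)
Your argument is correct in outline, and its skeleton (restarting at $\tau-1$, using Lemma \ref{est1} to bound $v_n(\tau-1)$, Lemma \ref{comv1} for a.e.-in-time convergence in $L^2(\o_k)$, and the tail estimate of Lemma \ref{est3}, uniform in $\sigma\in[\tau-1,\tau]$, to upgrade to strong $\ltwo$ convergence at a.e.\ interior time) coincides with the paper's. Where you genuinely diverge is the final step: you conclude at the endpoint $t=\tau$ by an energy-equation (Ball-type) argument — identify the limit $v_0$ as a solution via Minty and the weak limits of $A(w_n)$ and $f(\cdot,w_n)$, prove $\limsup_n\norm{v_n(\tau)}\le\norm{v_0(\tau)}$ using weak lower semicontinuity and reverse Fatou through \eqref{f1}, and combine with weak convergence. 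The paper avoids all of this: it fixes a single time $s_0\in(\tau-1,\tau)$ at which $v(s_0,\tau-t_n,\theta_{-\tau}\omega,v_{0,n})\to v_0$ strongly in $\ltwo$ (exactly the combination of Lemma \ref{comv1} with the tails of Lemma \ref{est3} and of $v_0$ that you already carry out), and then simply pushes this forward to time $\tau$ with the Lipschitz continuity in initial data \eqref{pexiv_30} from the uniqueness proof of Lemma \ref{exiv}; the limit point is $v(\tau,s_0,\theta_{-\tau}\omega,v_0)$, and no identification of $v_0$ as a solution, no energy equation, no Minty or Fatou step is needed. Note that your own second paragraph already contains everything required for this one-line finish, so your third paragraph is superfluous here; its cost is also where your proof is least airtight (the claim that the limit "solves \eqref{veq1} exactly as in Lemma \ref{exiv}" requires re-deriving the uniform estimates of Lemma \ref{lemvk} on $[\tau-1,\tau]$ for solutions with varying initial times, and the Minty identification hides its own limsup energy inequality). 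What your route buys in exchange is generality: the energy method does not rely on the continuous-dependence estimate \eqref{pexiv_30}, hence would survive without condition \eqref{f3}, whereas the paper's shortcut exploits it decisively.
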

 
 \begin{proof}
 By Lemma \ref{est1} we find  that 
 there  exists  $N_1 =N_1(\tau, \omega, D, \alpha)>0$
  such that   for all $n \ge N_1$,
   \be\label{pav_1}
   \| v(\tau -1,  \tau -t_n, \theta_{-\tau} \omega, v_{0,n} ) \|
   \le c_1.
  \ee\sloppy
  Applying Lemma \ref{comv1} to the sequence
  $v(\tau,  \tau - 1, \theta_{-\tau} \omega,
    v(\tau -1,  \tau -t_n, \theta_{-\tau} \omega, v_{0,n} )  )$,
    we find  that there exist $s_0 \in (\tau -1, \tau)$, 
    $v_0 \in \ltwo$
    and a subsequence (not relabeled) such that
    as $n \to \infty$,
   $$
    v(s_0,  \tau - 1, \theta_{-\tau} \omega,
    v(\tau -1,  \tau -t_n, \theta_{-\tau} \omega, v_{0,n} )  )
    \to v_0
    \quad \text{ in } L^2(\o_k)
    \text{ for every }  k \in \N,
   $$
   that is,   as $n \to \infty$,
    \be
    \label{pav_2}
    v(s_0,    \tau -t_n, \theta_{-\tau} \omega, v_{0,n} )
    \to v_0
    \quad \text{ in } L^2(\o_k)
    \text{ for every }  k \in \N.
    \ee
    By \eqref{pexiv_30}
   we get
    $$
    \norm{ v(\tau,  s_0, \theta_{-\tau} \omega,
    v(s_0,  \tau -t_n, \theta_{-\tau} \omega, v_{0,n} ) ) 
    - v(\tau, s_0,  \theta_{-\tau} \omega, v_0)} 
    $$
    $$
    \le
    e^{c_1 (\tau -s_0)}
    \norm{  
   v(s_0,  \tau -t_n, \theta_{-\tau} \omega, v_{0,n} )  -v_0  
    }.
    $$  Since $s_0 \in (\tau-1, \tau)$, we obtain
     \be\label{pav_2a}
    \norm{ v(\tau,  s_0, \theta_{-\tau} \omega,
    v(s_0,  \tau -t_n, \theta_{-\tau} \omega, v_{0,n} ) ) 
    - v(\tau, s_0,  \theta_{-\tau} \omega, v_0)} ^2
    $$
    $$
    \le
     e^{2c_1  }
    \int_{\abs{x} < k}
     \abs{v(s_0,  \tau -t_n, \theta_{-\tau} \omega, v_{0,n} )
     -v_0 }^2 dx
     $$
     $$
     +
      e^{2c_1  }
    \int_{\abs{x} \ge k}
     \abs{v(s_0,  \tau -t_n, \theta_{-\tau} \omega, v_{0,n} )
     -v_0 }^2 dx
     $$
      $$
    \le
     e^{2c_1  }
    \int_{\abs{x} < k}
     \abs{v(s_0,  \tau -t_n, \theta_{-\tau} \omega, v_{0,n} )
     -v_0 }^2 dx
     $$
     $$
     +
     2 e^{2c_1  }
    \int_{\abs{x} \ge k}
    \left (
     \abs{v(s_0,  \tau -t_n, \theta_{-\tau} \omega, v_{0,n} )}^2
     + \abs{v_0}^2 
     \right )
       dx.
     \ee
     Since  $v_0 \in \ltwo$, given $\nu>0$,  there exists
     $K_1 =K_1(\nu) \ge 1$ such that for all $k \ge K_1$,
     \be\label{pav_3}
      2 e^{2c_1  }
    \int_{\abs{x} \ge k}
      \abs{v_0}^2  ds \le \nu.
      \ee
   On the other hand,  by Lemma \ref{est3},
   there exist $N_2=N_2(\tau, \omega, D,  \alpha, \eps, \nu)\ge 1$ 
     and
     $K_2=K_2(\tau, \omega,   \alpha, \eps, \nu)\ge  K_1$ 
     such that  for all $n \ge N_2$  and $k \ge K_2$,
     \be\label{pav_4}
       2  e^{2c_1  }
    \int_{\abs{x} \ge  k}
     \abs{v(s_0,  \tau -t_n, \theta_{-\tau} \omega, v_{0,n} )
       }^2 dx
     \le \nu.
     \ee
     By \eqref{pav_2} we find that there exists
     $N_3 = N_3(\tau, \omega, D,  \alpha, \eps, \nu)\ge N_2$ 
     such that
     for all $n \ge N_3$,
     \be\label{pav_8}
      e^{2c_1  }
    \int_{\abs{x} < K_2}
     \abs{v(s_0,  \tau -t_n, \theta_{-\tau} \omega, v_{0,n} )
     -v_0 }^2 dx
     \le \nu.
     \ee
     It follows   from \eqref{pav_2a}-\eqref{pav_8}
     that  for all $n \ge N_3$, 
       $$
    \norm{ v(\tau,  s_0, \theta_{-\tau} \omega,
    v(s_0,  \tau -t_n, \theta_{-\tau} \omega, v_{0,n} ) ) 
    - v(\tau, s_0,  \theta_{-\tau} \omega, v_0)} ^2
    \le 3\nu,
    $$
    that is, for all $n \ge N_3$, 
      $$
    \norm{ v(\tau,   \tau -t_n, \theta_{-\tau} \omega, v_{0,n}  ) 
    - v(\tau, s_0,  \theta_{-\tau} \omega, v_0)} ^2
    \le 3\nu.
    $$
    Therefore, 
    $ v(\tau,   \tau -t_n, \theta_{-\tau} \omega, v_{0,n}  ) $
    converges to  $  v(\tau, s_0,  \theta_{-\tau} \omega, v_0) $
    in $\ltwo$. This completes   the proof.
   \end{proof}

\section{Random Attractors}
\setcounter{equation}{0}

  In this section, we prove the existence of $\cald$-pullback attractor
  for \eqref{seq1}-\eqref{seq2} in $\ltwo$
  by  Proposition \ref{att}.
  To this end, we need to establish   the
  existence of    $\cald$-pullback absorbing sets
  and the $\cald$-pullback asymptotic compactness
  of $\Phi$ in $\ltwo$. 
  The existence of  absorbing sets of $\Phi$ is given below.

  \begin{lemma}
  \label{lem41}
    Suppose \eqref{f1}-\eqref{f3}
    and \eqref{g1} hold. Then    for every
    $\alpha \le \alpha_0$  and $\eps >0$,  
    the stochastic equation \eqref{seq1}
    with \eqref{seq2}  has   a closed
    measurable $\cald$-pullback absorbing set $K=\{ K(\tau, \omega):
    \tau \in \R, \omega \in \Omega \} \in \cald$ which  is given by
    \be\label{lem41_1}
    K (\tau, \omega) = \{ u \in \ltwo: \| u\|^2 \le
    2\norm{\eps h z(\omega)}^2
    +
    2 R(\tau, \omega, \alpha, \eps)
    \},
    \ee
    where $R(\tau, \omega, \alpha, \eps)$ is the number given by
    \eqref{est2_2}.
  \end{lemma}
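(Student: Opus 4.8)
The plan is to verify the three defining properties of a closed measurable $\cald$-pullback absorbing set for the family $K$ given by \eqref{lem41_1}: that $K$ pullback-absorbs every $B\in\cald$ under $\Phi$, that $K\in\cald$, and that $K$ is closed and $\calf$-measurable. The entire argument rests on the conjugation \eqref{pcycle}: replacing $\tau$ by $\tau-t$ and $\omega$ by $\theta_{-t}\omega$ and using $\theta_{-(\tau-t)}\theta_{-t}=\theta_{-\tau}$ and $\theta_t\theta_{-t}=\mathrm{id}$, it reads
\be
\Phi(t,\tau-t,\theta_{-t}\omega,w)=v\bigl(\tau,\tau-t,\theta_{-\tau}\omega,\,w-\eps h\,z(\theta_{-t}\omega)\bigr)+\eps h\,z(\omega),\qquad w\in\ltwo .
\ee
Together with the uniform estimate \eqref{est2_1} of Lemma~\ref{est2} and the decay \eqref{est2_3}, this is essentially all that is needed.

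For the absorption property, fix $\tau\in\R$, $\omega\in\Omega$ and $B\in\cald$ and form the shifted family $\widetilde B$ with $\widetilde B(\tau,\omega)=\{\,w-\eps h\,z(\omega):w\in B(\tau,\omega)\,\}$. Using $\norm{w-\eps h\,z(\omega)}^2\le 2\norm{w}^2+2\eps^2\norm{h}^2\abs{z(\omega)}^2$ together with the temperedness of $z$ and the restriction $\alpha\le\alpha_0$, which through the ergodic theorem ensures $e^{\frac54\lambda s+2\alpha\int_s^0\eta(\theta_r\omega)dr}\abs{z(\theta_s\omega)}^2\to0$ as $s\to-\infty$ exactly as in the estimates from the proof of Lemma~\ref{est1}, one checks that $\widetilde B$ again satisfies \eqref{Dom1}, hence $\widetilde B\in\cald$. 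Applying Lemma~\ref{est2} to $\widetilde B$ then produces $T=T(\tau,\omega,B,\alpha,\eps)>0$ such that, for all $t\ge T$ and every $w\in B(\tau-t,\theta_{-t}\omega)$, the datum $v_{\tau-t}:=w-\eps h\,z(\theta_{-t}\omega)\in\widetilde B(\tau-t,\theta_{-t}\omega)$ obeys $\norm{v(\tau,\tau-t,\theta_{-\tau}\omega,v_{\tau-t})}^2\le R(\tau,\omega,\alpha,\eps)$. Substituting this into the conjugation identity and using $(a+b)^2\le 2a^2+2b^2$ gives $\norm{\Phi(t,\tau-t,\theta_{-t}\omega,w)}^2\le 2R(\tau,\omega,\alpha,\eps)+2\norm{\eps h\,z(\omega)}^2$, i.e. $\Phi(t,\tau-t,\theta_{-t}\omega,w)\in K(\tau,\omega)$ for all $t\ge T$, which is precisely $\cald$-pullback absorption.

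Next I would show $K\in\cald$. Since $K(\tau,\omega)$ is the closed ball of $\ltwo$ of squared radius $2\norm{\eps h\,z(\omega)}^2+2R(\tau,\omega,\alpha,\eps)$, one has $\norm{K(\tau+s,\theta_s\omega)}^2=2\norm{\eps h\,z(\theta_s\omega)}^2+2R(\tau+s,\theta_s\omega,\alpha,\eps)$, so it suffices to multiply by $e^{\frac54\lambda s+2\alpha\int_s^0\eta(\theta_r\omega)dr}$ and let $s\to-\infty$ term by term. The $z$-term vanishes by the temperedness argument used above, and the $R$-term vanishes because, setting $t=-s$, one has $R(\tau+s,\theta_s\omega,\alpha,\eps)=R(\tau-t,\theta_{-t}\omega,\alpha,\eps)$ and $e^{\frac54\lambda s+2\alpha\int_s^0\eta(\theta_r\omega)dr}=e^{-\frac54\lambda t+2\alpha\int_{-t}^0\eta(\theta_r\omega)dr}$, so the claim is exactly \eqref{est2_3}. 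Thus $K$ is a nonempty bounded family satisfying \eqref{Dom1}, i.e. $K\in\cald$.

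Finally, each $K(\tau,\omega)$ is a closed ball centered at the origin, hence closed, and its radius is $\calf$-measurable in $\omega$ because $z$ is $\calf$-measurable while $R(\tau,\cdot,\alpha,\eps)$ is, by Fubini, a measurable function of $\omega$ (it is an integral over $s\in(-\infty,0)$ of integrands built from the measurable maps $\omega\mapsto z(\theta_s\omega)$ and $\omega\mapsto\eta(\theta_s\omega)$ and from deterministic data); hence $\omega\mapsto K(\tau,\omega)$ is a measurable closed random set. I do not anticipate a genuine obstacle: the statement is an assembly of the conjugation \eqref{pcycle}, the a priori bound \eqref{est2_1}, and the decay \eqref{est2_3}. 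The one point needing care is confirming that the shifted universe $\widetilde B$ and the radius of $K$ remain $\cald$-tempered, and this reduces entirely to the temperedness estimates for $z$ and $\eta$ already established in the proofs of Lemmas~\ref{est1}--\ref{est2}.
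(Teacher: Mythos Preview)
Your proposal is correct and follows essentially the same route as the paper: you introduce the shifted family $\widetilde B(\tau,\omega)=B(\tau,\omega)-\eps h\,z(\omega)$, check it lies in $\cald$ via the temperedness of $z$, apply the bound of Lemma~\ref{est2} to $v$, pass back to $u$ (equivalently $\Phi$) through the conjugation \eqref{pcycle}, and then use \eqref{est2_3} together with temperedness to see $K\in\cald$, with measurability read off from that of $z$ and $R$. The paper's proof is organized identically, only using \eqref{uv}/\eqref{pcycle2} in place of your explicit rewriting of \eqref{pcycle}.
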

  
  \begin{proof}
  Let $D =\{ D(\tau, \omega): \tau \in \R, \omega \in
  \Omega\} \in \cald$. For every $\tau \in \R$   and
  $\omega \in \Omega$, denote by
 \be\label{plem41_1a}
  {\widetilde{D}}(\tau, \omega)
  =\{ v \in \ltwo: v= u -\eps h z(\omega)
  \text{ for some } u \in D(\tau, \omega) \}.
  \ee
  Since $z$ is tempered, we find that
  the family 
  ${\widetilde{D}}
  = \{{\widetilde{D}}(\tau, \omega), \tau \in \R, \omega \in
  \Omega  \}$ belongs to $\cald$ provided
  $D\in \cald$.
  By \eqref{uv} we have
  \be\label{plem41_1}
  u(\tau, \tau -t, \theta_{-\tau} \omega, u_{\tau -t})
$$
$$
  = 
   v(\tau, \tau -t, \theta_{-\tau} \omega, v_{\tau -t})
   + \eps h z(\omega)  \ 
   \text{ with } \  v_{\tau -t}
   = u_{\tau -t} -\eps h z(\theta_{-t} \omega).
   \ee
   Thus, if $u_{\tau -t}
   \in D(\tau-t, \theta_{-t} \omega) \in \cald$, then
   $v_{\tau -t}
   \in {\widetilde{D}}(\tau-t, \theta_{-t} \omega)
   \in \cald$. 
   By Lemma \ref{est2} we find that
   there exists  $T=T(\tau, \omega, D, \alpha, \eps) >0$
   such that for all $t \ge T$,
    $$\norm{v(\tau, \tau -t, \theta_{-\tau} \omega, v_{\tau -t})}
    ^2
    \le  R(\tau, \omega, \alpha, \eps),
    $$
    where $R(\tau, \omega, \alpha, \eps)$
    is as in \eqref{est2_2}. 
    By \eqref{plem41_1} we get for all $t \ge T$,
    $$
    \norm{u(\tau, \tau -t, \theta_{-\tau} \omega, u_{\tau -t})}
    ^2
    \le  2 \norm{\eps h z(\omega) }^2
    + 2R(\tau, \omega, \alpha, \eps ).
    $$
    This along with 
    \eqref{pcycle2} and \eqref{lem41_1} shows
     that  for all $t \ge T$,
    \be\label{plem41_4}
    \Phi (t, \tau -t, \theta_{-t} \omega,
    D(\tau-t, \theta_{-t} \omega) )
    \subseteq K(\tau, \omega).
    \ee
    On the other hand, by \eqref{est2_3} and the temperedness
    of $z$ we obtain
    \be\label{plem41_6}
    \lim_{t \to \infty}
  e^{-{\frac 54} \lambda t + 2 \alpha \int_{-t}^0 \eta (\theta_r\omega) dr}  
  \norm{
    K(\tau- t, \theta_{-t} \omega)} =0.
    \ee
    By \eqref{plem41_4}-\eqref{plem41_6} we find that
    $K $ given by \eqref{lem41_1} is a closed
    $\cald$-pullback absorbing set of $\Phi$ in $\cald$.
    Note that   the measurability of  $K (\tau, \omega)$ 
    in $\omega \in \Omega$  follows  from that of 
      $z(\omega)$  and 
  $R(\tau, \omega, \alpha, \eps)$  immediately.
  This  completes     the proof.
     \end{proof}

    The following is our main result regarding the 
     existence of
    $\cald$-pullback attractors of $\Phi$.
    
    \begin{theorem}
    \label{eatt}
    Suppose  \eqref{f1}-\eqref{f3}  and \eqref{g1} hold.
    Then for every $\alpha \le \alpha_0$  and $\eps>0$,
 the   stochastic equation \eqref{seq1}
 with  \eqref{seq2}  
   has a unique $\cald$-pullback attractor $\cala
   =\{\cala(\tau, \omega):
      \tau \in \R, \ \omega \in \Omega \} \in \cald$
 in $\ltwo$.   
 In addition,  if there is $T>0$  such that
 $f(t,x,s)$, $g(t,x)$,
 $\psi_1(t,x)$ and $\psi_3 (t,x)$ are all
 $T$-periodic in $t$
 for fixed $x \in \R^n$
 and $s\in \R$,   then the attractor $\cala$
 is also $T$-periodic.
 \end{theorem}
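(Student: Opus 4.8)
The plan is to deduce Theorem~\ref{eatt} from Proposition~\ref{att} by assembling the estimates obtained in the previous sections. The two inputs required by Proposition~\ref{att} are the existence of a closed measurable $\cald$-pullback absorbing set in $\cald$ and the $\cald$-pullback asymptotic compactness of the cocycle $\Phi$; both are now essentially at hand.

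\emph{Absorbing set.} By Lemma~\ref{lem41}, for every $\alpha\le\alpha_0$ and $\eps>0$ the family $K$ defined by \eqref{lem41_1} is a closed measurable $\cald$-pullback absorbing set for $\Phi$ and belongs to $\cald$, so this requirement is met directly.

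\emph{Asymptotic compactness.} Fix $\tau\in\R$, $\omega\in\Omega$, $D\in\cald$, $t_n\to\infty$ and $u_{0,n}\in D(\tau-t_n,\theta_{-t_n}\omega)$. Set $v_{0,n}=u_{0,n}-\eps h\, z(\theta_{-t_n}\omega)$; by the temperedness of $z$, the translated family $\widetilde D$ introduced in the proof of Lemma~\ref{lem41} belongs to $\cald$, and $v_{0,n}\in\widetilde D(\tau-t_n,\theta_{-t_n}\omega)$. Using the cocycle formula \eqref{pcycle} together with $\theta_{-(\tau-t_n)}\theta_{-t_n}\omega=\theta_{-\tau}\omega$ and $\theta_{t_n}\theta_{-t_n}\omega=\omega$, one has
$$\Phi(t_n,\tau-t_n,\theta_{-t_n}\omega,u_{0,n})=v(\tau,\tau-t_n,\theta_{-\tau}\omega,v_{0,n})+\eps h\, z(\omega).$$
Lemma~\ref{asyv}, applied with the universe element $\widetilde D$, yields a subsequence along which $v(\tau,\tau-t_n,\theta_{-\tau}\omega,v_{0,n})$ converges in $\ltwo$; since $\eps h\, z(\omega)$ is a fixed element of $\ltwo$ (recall $h\in H^2(\R^n)$), the same subsequence of $\Phi(t_n,\tau-t_n,\theta_{-t_n}\omega,u_{0,n})$ converges in $\ltwo$. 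Hence $\Phi$ is $\cald$-pullback asymptotically compact, and Proposition~\ref{att} yields the unique $\cald$-pullback attractor $\cala\in\cald$ for $\Phi$ in $\ltwo$ of the asserted form.

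\emph{Periodicity.} Assume $f$, $g$, $\psi_1$, $\psi_3$ are $T$-periodic in $t$; I would verify the two extra hypotheses of Proposition~\ref{att}. Writing $\tilde V(r)=v(r+\tau,\tau,\theta_{-\tau}\omega,v_\tau)$ for $r\ge0$, one checks from Definition~\ref{defv} that $\tilde V$ solves a problem whose dependence on $\tau$ enters only through $f(r+\tau,x,\cdot)$ and $g(r+\tau,x)$, while \eqref{pcycle} becomes $\Phi(t,\tau,\omega,u_\tau)=\tilde V(t)+\eps h\, z(\theta_t\omega)$. Replacing $\tau$ by $\tau+T$ and invoking $f(r+\tau+T,\cdot)=f(r+\tau,\cdot)$, $g(r+\tau+T,\cdot)=g(r+\tau,\cdot)$ together with the uniqueness part of Lemma~\ref{exiv} gives $\Phi(t,\tau+T,\omega,\cdot)=\Phi(t,\tau,\omega,\cdot)$ for all $t\ge0$. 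For the absorbing set, by \eqref{lem41_1} it suffices to check $R(\tau+T,\omega,\alpha,\eps)=R(\tau,\omega,\alpha,\eps)$; in \eqref{est2_2} the $\tau$-dependence of $R$ resides only in the quantities $\norm{g(s+\tau)}^2$, $\norm{\psi_1(s+\tau)}_{1}$, $\norm{\psi_3(s+\tau)}_{q_1}^{q_1}$ under the integral sign, and these are unchanged when $\tau\mapsto\tau+T$. Hence $K(\tau+T,\omega)=K(\tau,\omega)$, and Proposition~\ref{att} gives that $\cala$ is pathwise $T$-periodic.

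Since essentially all of the analytic work has already been carried out in Lemmas~\ref{lem41}, \ref{asyv}, \ref{est2} and \ref{exiv}, there is no genuinely hard step remaining; the only points demanding care are the passage between the solution $u$ of \eqref{seq1}--\eqref{seq2} and the auxiliary function $v$ (in particular checking $\widetilde D\in\cald$, so that Lemma~\ref{asyv} is applicable) and the routine time-shift bookkeeping in the periodic case.
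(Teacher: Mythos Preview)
Your proposal is correct and follows essentially the same approach as the paper: you invoke Lemma~\ref{lem41} for the absorbing set, translate to the $v$-variable via the shifted family $\widetilde D\in\cald$ and apply Lemma~\ref{asyv} for asymptotic compactness, then conclude by Proposition~\ref{att}; the periodicity argument likewise matches the paper's, checking $\Phi(t,\tau+T,\omega,\cdot)=\Phi(t,\tau,\omega,\cdot)$ from the $T$-periodicity of $f,g$ and uniqueness, and $K(\tau+T,\omega)=K(\tau,\omega)$ from the form of $R$ in \eqref{est2_2}.
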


\begin{proof}
We first prove  that  $\Phi$ is    $\cald$-pullback
asymptotically  compact    in $\ltwo$;
that is, 
  for every  
 $\tau \in \R$, $\omega \in \Omega$,
 $D  \in \cald$,  $t_n \to \infty$
 and $ u_{0,n}  \in D(\tau -t_n, \theta_{ -t_n} \omega )$,
we want  to show that 
 the sequence
 $\Phi (t_n, \tau -t_n,  \theta_{-t_n} \omega,   u_{0,n}  )$
  has a
   convergent
subsequence in $\ltwo $.
 Let  $ v_{0,n} 
 = u_{0,n} - \eps hz(\theta_{-t_n} \omega)$
 and  $\widetilde{D}$ be the family
  given by \eqref{plem41_1a}.
 Since   $ u_{0,n}  \in D(\tau -t_n, \theta_{ -t_n} \omega )$, we find
 that $ v_{0,n} 
   \in {\widetilde{D}}(\tau -t_n, \theta_{ -t_n} \omega )
  \in \cald$. Therefore,  by 
     \eqref{plem41_1}  and 
    Lemma \ref{asyv} we find that
  $u (\tau, \tau -t_n,  \theta_{-\tau} \omega,   u_{0,n}  )$
  has a convergent subsequence in $\ltwo$.
   This together with \eqref{pcycle2} indicates that
    $\Phi (t_n, \tau -t_n,  \theta_{-t_n} \omega,   u_{0,n}  )$
  has a
   convergent
subsequence,  and thus it is
$\cald$-pullback asymptotically compact  in $\ltwo $.
Since $\Phi$ also has a
closed measurable  $\cald$-pullback
absorbing  set $K$ given by \eqref{lem41_1},
by Proposition \ref{att} we get the existence and uniqueness
of $\cald$-pullback attractor $\cala \in \cald$ of $\Phi$
immediately.

Next, we discuss   $T$-periodicity of $\cala$.
Note  that  if  $f$  and $g$ are $T$-periodic in
their first arguments, then the cocycle $\Phi$
is also  $T$-periodic. 
Indeed,  in this case,    
for every
 $t \in \R^+$, $\tau \in \R$ and $\omega \in \Omega$, 
 by \eqref{pcycle2} we have
\be\label{peatt_1}
\Phi (t, \tau +T, \omega,  \cdot )
= u(t+ \tau +T, \tau +T,  \theta_{ -\tau -T} \omega,  \cdot )
$$
$$
=u(t +\tau, \tau, \theta_{ -\tau} \omega,  \cdot )
= \Phi (t, \tau,  \omega,  \cdot ).
\ee
 In addition,  if  $g(t,x)$, $\psi_1(t,x)$ and $\psi_3(t,x)$ are
 all $T$-periodic in $t$, then by
 \eqref{est2_2} and \eqref{lem41_1} we  get
  $ 
K(\tau +T, \omega)
=K(\tau, \omega )$
 for all $\tau \in \R$ and
$\omega \in \Omega$.
This along with
\eqref{peatt_1}
and Proposition \ref{att}
yields  the
$T$-periodicity of $\cala$.
 \end{proof}

\chapter{Multiplicative Noise}\label{mult_chap}
Given $\tau \in\R$ and $\omega \in \Omega$, 
consider the  following  stochastic     equation 
defined for   $ x \in {\R^n}$ and   $t > \tau$, 
\be
  \label{seq11}
  {\frac {\partial u}{\partial t}}  + \lambda u
  - {\rm div} \left (|\nabla u |^{p-2} \nabla u \right )
  =f(t,x,u ) 
  +g(t,x) 
  +\alpha u\circ {\frac {dW}{dt}}
 \ee  with  initial condition
 \be\label{seq22}
 u( \tau, x ) = u_\tau (x),   \quad x\in  {\R^n},
 \ee
 where $p\ge 2$,  $ \alpha>0  $,  $\lambda>0$,
$g \in L^2_{loc}(\R, \ltwo)$, and
$W$  is a  two-sided real-valued Wiener process on  $(\Omega, \calf, P)$.
We  assume the nonlinearity 
$f: \R \times {\R^n} \times \R$ 
$\to \R$ is continuous and satisfies, 
for all
$t, s \in \R$   and  $x \in {\R^n}$, 
\be 
\label{f11}
f (t, x, s) s \le - \gamma
 |s|^q + \psi_1(t, x),  
\ee
\be 
\label{f21}
|f(t, x, s) |   \le \psi_2 (t,x)  |s|^{q-1} + \psi_3 (t, x),
\ee
\be 
\label{f31}
{\frac {\partial f}{\partial s}} (t, x, s)   \le \psi_4 (t,x),
\ee\sloppy
where $\gamma>0$  and $ q \ge p $
are constants,
$\psi_1 \in L^1_{loc} (\R,  L^1( {\R^n}) )$, 
$\psi_2, \psi_4 \in  L^\infty_{loc} (\R, L^\infty ( {\R^n}))$,
and $\psi_3 \in L^{q_1}_{loc} (\R, L^{q_1} (  {\R^n}))$.
From now on,   we  always  
use $p_1$ and $q_1$
to denote 
  the conjugate exponents
of $p$ and $q$, respectively.

To define a random dynamical system for
  \eqref{seq11},  we need to transfer the stochastic equation
  to a pathwise  deterministic  system.
  As usual,  let $z$ be the 
  random variable  given by:
$$
z ( \omega)=   -\int_{-\infty}^0e^{\tau}\omega(\tau)d\tau, \quad \omega \in \Omega
$$
Then $z$ solves the following stochastic equation:

\be
\frac{d}{dt}z(\theta_t\omega)+z(\theta_t\omega)=\frac{dW}{dt}
\ee
It  follows from \cite{arn1} that
 there exists a $\theta_t$-invariant 
 set $\widetilde{\Omega}  $  
 of full
 measure 
 such that  
  $z(\theta_t \omega)$  is
 continuous in $t$ 
and 
\be\label{zergodic}
 \lim\limits_{t \to \pm \infty}   {\frac { z(\theta_t \omega)}{t}}
= 0 
\ \ 
\mbox{and} \ \ 
\lim_{t \to \pm \infty}
{\frac 1t} \int_0^t z(\theta_r \omega ) dr =0
\ee
for all  $\omega \in \widetilde{\Omega}$.
For convenience,  we will  denote  
$\widetilde{\Omega}$    by   $\Omega$
in the sequel. Let $u(t, \tau, \omega, u_\tau)$
   be a solution of problem \eqref{seq11}-\eqref{seq22}
   with initial condition $u_\tau$ at initial time $\tau$,  and  define
 \be 
 \label{uv1}
 v(t, \tau, \omega, v_\tau)
   =    e^{-\alpha z(\theta_{t} \omega)}u(t, \tau, \omega, u_\tau)
 \quad \mbox{with }   \
 v_\tau =   e^{-\alpha z(\theta_{\tau} \omega)}u_\tau.
 \ee
 By \eqref{seq11} and \eqref{uv1}, 
 after simple calculations,
  we get
 \be 
 \label{veq11}
\frac{\partial v}{\partial t}-e^{\alpha(p-2)z(\theta_{t} \omega)}\text{div}
 \left (
\abs{\nabla
 v}^{p-2}
 \nabla v
 \right )
 + \lambda v 
$$
$$
 = \alpha z(\theta_{t} \omega)v + e^{-\alpha z(\theta_{t} \omega)}f(t,x,e^{\alpha z(\theta_{t} \omega)}v)
+ e^{-\alpha z(\theta_{t} \omega)}g(t,x),
\ee
with initial   condition
\be 
 \label{veq22}
v(\tau, x )=v_{\tau}(x ), \quad  x\in {\R^n}.
\ee
The well-posedness of the equations is investigated in \cite{krause2}, from which we also have the following result which is necessary to prove asymptotic compactness of the cocycle.

\begin{lemma}
 \label{comv11}
 Let   \eqref{f11}-\eqref{f31}  hold and
 $\{v_n\}_{n=1}^\infty$ be a bounded sequence
 in $\ltwo$.
 Then for every   
  $\tau  \in \R$,  $t >\tau$ and 
 $\omega \in \Omega$,
 there exist  $v_0 \in L^2(\tau, t;   \ltwo)$
 and  a subsequence 
 $\{v(\cdot, \tau, \omega, v_{n_m})\}_{m=1}^\infty$
  of
 $\{v(\cdot, \tau, \omega, v_n)\}_{n=1}^\infty$
 such that 
 $v(s, \tau, \omega, v_{n_m})
 \to v_0 (s) $ in $L^2(\o_k)$
 as $m \to \infty$
 for every  fixed $k \in \N$
 and for almost all $s \in (\tau, t)$.
 \end{lemma}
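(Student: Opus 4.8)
\emph{Proof proposal.} The plan is to adapt the proof of Lemma~\ref{comv1} to the transformed equation \eqref{veq11}--\eqref{veq22}. Fix $T>0$ large enough that $t\in(\tau,\tau+T]$. The starting point is a set of uniform-in-$n$ a priori bounds for the solutions $v(\cdot,\tau,\omega,v_n)$: since $\{v_n\}_{n}$ is bounded in $\ltwo$, the energy identity associated with \eqref{veq11} (the multiplicative-noise analogue of \eqref{enlem1}, established together with well-posedness in \cite{krause2} by arguments parallel to Lemmas~\ref{lemvk} and \ref{exiv}) yields a constant $C=C(\tau,\omega,T)$ such that $\{v(\cdot,\tau,\omega,v_n)\}_{n}$ is bounded in
$$
L^\infty(\tau,\tau+T;\ltwo)\cap L^p(\tau,\tau+T;W^{1,p}(\R^n))\cap L^q(\tau,\tau+T;\lq),
$$
and $\{\frac{dv_n}{dt}\}_{n}$ is bounded in $L^{p_1}(\tau,\tau+T;(W^{1,p})^*)+L^2(\tau,\tau+T;\ltwo)+L^{q_1}(\tau,\tau+T;L^{q_1}(\R^n))$. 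The essential point is that the random coefficients $e^{\alpha(p-2)z(\theta_t\omega)}$ and $\alpha z(\theta_t\omega)$ in \eqref{veq11} are pathwise continuous, hence bounded above and below on the compact interval $[\tau,\tau+T]$, and the rescaled nonlinearity $e^{-\alpha z(\theta_t\omega)}f(t,x,e^{\alpha z(\theta_t\omega)}v)$ still obeys, with locally bounded coefficients, conditions of the same type as \eqref{f11}--\eqref{f31}; so every estimate of the additive-noise case carries over, with the constants now $\omega$-dependent.

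Next, I reproduce \eqref{pexiv_6}. For each $k\in\N$ the bounds above restrict to bounds on $\o_k$, and since the embedding $W^{1,p}(\o_k)\hookrightarrow L^2(\o_k)$ is compact, an Aubin--Lions type compactness lemma (see \cite{lion1}) gives, along a subsequence, strong convergence of $v(\cdot,\tau,\omega,v_n)$ in $L^2(\tau,\tau+T;L^2(\o_k))$. Running this successively for $k=1,2,\dots$ and extracting a diagonal subsequence (not relabeled) produces a single limit $\widetilde{v}\in L^2(\tau,\tau+T;\ltwo)$ with
$$
v(\cdot,\tau,\omega,v_n)\to\widetilde{v}\quad\text{strongly in }L^2(\tau,\tau+T;L^2(\o_k))\ \text{for every }k\in\N .
$$

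It remains to pass from convergence in the $s$-variable to convergence for almost every $s$, which is exactly the measure-theoretic extraction at the end of the proof of Lemma~\ref{comv1}: for $k=1$ choose a further subsequence and a null set $I_1\subseteq[\tau,\tau+T]$ with $v(s,\tau,\omega,v_n)\to\widetilde{v}(s)$ in $L^2(\o_1)$ for $s\notin I_1$; iterate over $k$, at each stage passing to a sub-subsequence and enlarging the exceptional set to $I_k$; put $I=\bigcup_{k\ge1}I_k$, still a null set, and diagonalize to obtain a subsequence $\{v_{n_m}\}$ with $v(s,\tau,\omega,v_{n_m})\to\widetilde{v}(s)$ in $L^2(\o_k)$ for every $k\in\N$ and every $s\in[\tau,\tau+T]\setminus I$. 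Since $t\in(\tau,\tau+T]$, taking $v_0:=\widetilde{v}|_{(\tau,t)}\in L^2(\tau,t;\ltwo)$ completes the argument. The main obstacle is the first step: verifying that the a priori bounds, and especially the energy identity for \eqref{veq11}, hold uniformly in $n$ with only $\omega$-dependence of the constants. Once the pathwise continuity of $z(\theta_\cdot\omega)$ is used to control the random coefficients, this is the argument of Lemmas~\ref{lemvk}--\ref{exiv} as carried out in \cite{wan8, krause2}, and the remainder of the proof is identical to that of Lemma~\ref{comv1}.
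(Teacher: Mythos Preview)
Your proposal is correct and follows essentially the same approach as the paper. The paper itself does not give an explicit proof of Lemma~\ref{comv11}, deferring instead to \cite{krause2}, but the argument there is precisely the adaptation of Lemma~\ref{comv1} that you describe: uniform a~priori bounds from the energy identity for \eqref{veq11} (with the random coefficients controlled pathwise on $[\tau,\tau+T]$), Aubin--Lions compactness on each $\o_k$, and the same diagonal extraction over $k$ and over null sets $I_k$ as in the proof of Lemma~\ref{comv1}.
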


We must also specify the attraction domain for the multiplicative case. Here, we  consider
    a family  
  $D =\{ D(\tau, \omega) \subseteq \ltwo: \tau \in \R, \omega \in \Omega \}$ 
  of   
  bounded nonempty    sets  such that
  for every $\tau \in \R$   and $\omega \in \Omega$, 
 \be
 \label{Dom11}
 \lim_{s \to  - \infty} e^{ \frac{5}{4}\lambda s -2\alpha\int_{0}^{s}z(\theta_{\xi}\omega)d\xi -2\alpha z(\theta_{s}\omega)}
  \| D( \tau + s, \theta_{s} \omega ) \|^2 =0,
\ee   where 
   $\| S \|=  \sup\limits_{u \in S}
   \| u\|_{\ltwo }$ for a nonempty bounded  subset $S$ of $\ltwo$.
  In  the sequel,   we will   use  $\cald$
   to denote the collection of all families 
   with property \eqref{Dom11}:
 \be
 \label{Dom21}
\cald  = \{ 
   D =\{ D(\tau, \omega) \subseteq \ltwo:
    \tau \in \R, \omega \in \Omega \}: \ 
 D  \ \mbox{satisfies} \  \eqref{Dom11} \} .
\ee
For our purpose,   we 
further assume  the following condition
on $g$, $\psi_1$  and $\psi_3$:
for every $ \tau \in \R$, 
 \be\label{g11}
\int_{-\infty}^\tau e^{\lambda s}
\left (
\| g(s, \cdot) \|^2 + \| \psi_1 (s, \cdot) \|_{L^1({\R^n})}
+ \| \psi_3 (s, \cdot) \|_{L^{q_1}({\R^n})}^{q_1}
\right ) ds < \infty.
\ee 

\section{ Existence of Random Attractors}
\setcounter{equation}{0}

In this section, we prove the existence and uniqueness of
$\cald$-pullback attractors for 
the stochastic  equation   \eqref{seq11}.
We also show the periodicity of the attractor when
external terms are periodic in time.
We start with the uniform  estimates of solutions in $\ltwo$.

\begin{lemma}
\label{est11}
If  \eqref{f11}-\eqref{f31} and \eqref{g11} hold, 
then   for every $\alpha > 0$,
$\tau \in \mathbb{R}$,
 $\sigma \in [\tau-1,\tau]$,
   $\omega \in \Omega$, 
  and $D = \{D(\tau,\omega):\tau\in\mathbb{R},\omega\in\Omega\}
   \in \cald$, there is  $T = T(\tau, \omega, D, \alpha) > 0$ 
   such that for all  $t\geq T$, 
   the solution $v$ of problem
    \eqref{veq11}-\eqref{veq22} satisfies
$$
\norm{v(\sigma,\tau-t,\theta_{-\tau}\omega,v_{\tau-t})}^2
 \leq M,
$$
$$
 \int_{\tau-t}^\sigma e^{\frac{5}{4}\lambda (s-\sigma) 
 - 2\alpha \int_{\sigma}^{s} z(\theta_{r-\tau} \omega)dr}
  e^{-2\alpha z(\theta_{s-\tau} \omega)} 
  \norm{ u(s, \tau-t, \theta_{-\tau} \omega, u_{\tau-t})}_{W^{1,p}}^p ds
   \leq M,
$$
$$
 \int_{\tau-t}^\sigma e^{\frac{5}{4}\lambda (s-\sigma) 
 - 2\alpha \int_{\sigma}^{s} z(\theta_{r-\tau} \omega)dr}
  e^{-2\alpha z(\theta_{s-\tau} \omega)} 
$$
$$ 
\times ( \norm{u(s, \tau-t, \theta_{-\tau} \omega, u_{\tau-t})}^2
  + \norm{u(s, \tau-t, \theta_{-\tau} \omega, u_{\tau-t})}_q^q )
    ds 
  \leq M,
$$
where $e^{\alpha z(\theta_{-t}\omega)}
 v_{\tau-t} \in D(\tau-t, \theta_{-t}\omega)$ 
and $M$ is given by
\begin{equation}
\label{est11am}
M = c + c \int_{-\infty}^{\sigma-\tau} 
e^{\frac{5}{4}\lambda (\tau-\sigma+s) 
- 2\alpha \int_{\sigma-\tau}^s z(\theta_{r}\omega)dr } 
e^{-2\alpha z(\theta_{s}\omega)}
$$
$$
\times\left(\norm{\psi_1(s+\tau,\cdot)}_1
+ \norm{g(s+\tau, \cdot)}^2 \right)ds
\end{equation}
for some positive constant $c$ depending
 only on $\lambda$, $q$, and $\gamma$.
\end{lemma}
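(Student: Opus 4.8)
The plan is to follow the argument for Lemma~\ref{est1} in the additive case, replacing the additive shift $v=u-\eps h z(\theta_t\omega)$ by the multiplicative conjugation $v=e^{-\alpha z(\theta_t\omega)}u$ of \eqref{uv1}; the fact that the $p$-Laplacian in \eqref{veq11} no longer carries a cross term with $h$ makes the energy step cleaner than before. First I would test \eqref{veq11} with $v$, integrate over $\R^n$ and integrate the divergence term by parts to obtain
$$\frac12\frac{d}{dt}\norm{v}^2 + e^{\alpha(p-2)z(\theta_t\omega)}\norm{\nabla v}_p^p + \lambda\norm{v}^2 = \alpha z(\theta_t\omega)\norm{v}^2 + e^{-\alpha z(\theta_t\omega)}\ii f(t,x,e^{\alpha z(\theta_t\omega)}v)v\,dx + e^{-\alpha z(\theta_t\omega)}(g(t),v).$$
For the nonlinear term I would set $w=e^{\alpha z(\theta_t\omega)}v$, note $e^{-\alpha z(\theta_t\omega)}f(t,x,w)v = e^{-2\alpha z(\theta_t\omega)}f(t,x,w)w$, and apply \eqref{f11}; this yields $-\gamma e^{\alpha(q-2)z(\theta_t\omega)}\norm{v}_q^q + e^{-2\alpha z(\theta_t\omega)}\norm{\psi_1(t)}_1$, so that --- unlike the additive case --- only $\psi_1$ (and not $\psi_2,\psi_3$) survives. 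The term $e^{-\alpha z(\theta_t\omega)}(g(t),v)$ is controlled by Young's inequality, absorbing a small multiple of $\lambda\norm{v}^2$. Keeping a spare $\frac{\lambda}{2}\norm{v}^2$ on the left, I expect
$$\frac{d}{dt}\norm{v}^2 + \frac54\lambda\norm{v}^2 + \frac{\lambda}{2}\norm{v}^2 + 2e^{\alpha(p-2)z(\theta_t\omega)}\norm{\nabla v}_p^p + 2\gamma e^{\alpha(q-2)z(\theta_t\omega)}\norm{v}_q^q \le 2\alpha z(\theta_t\omega)\norm{v}^2 + c\,e^{-2\alpha z(\theta_t\omega)}\bigl(\norm{g(t)}^2+\norm{\psi_1(t)}_1\bigr).$$

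Next I would multiply by the integrating factor $e^{\frac54\lambda t - 2\alpha\int_0^t z(\theta_r\omega)\,dr}$, integrate from $\tau-t$ to $\sigma$, divide by the value of the factor at $\sigma$, then replace $\omega$ by $\theta_{-\tau}\omega$ and substitute $s\mapsto s+\tau$ in the forcing integral; this produces the weight $e^{\frac54\lambda(s-\sigma)-2\alpha\int_\sigma^s z(\theta_{r-\tau}\omega)\,dr}$ on every time integral. To pass from $v$ to $u$ I would use $v(s,\tau-t,\theta_{-\tau}\omega,\cdot)=e^{-\alpha z(\theta_{s-\tau}\omega)}u(s,\tau-t,\theta_{-\tau}\omega,\cdot)$, which converts $e^{\alpha(p-2)z(\theta_{s-\tau}\omega)}\norm{\nabla v(s)}_p^p$ into $e^{-2\alpha z(\theta_{s-\tau}\omega)}\norm{\nabla u(s)}_p^p$, $e^{\alpha(q-2)z(\theta_{s-\tau}\omega)}\norm{v(s)}_q^q$ into $e^{-2\alpha z(\theta_{s-\tau}\omega)}\norm{u(s)}_q^q$, and the retained $\frac{\lambda}{2}\norm{v(s)}^2$ into $\frac{\lambda}{2}e^{-2\alpha z(\theta_{s-\tau}\omega)}\norm{u(s)}^2$; this gives the first and third estimates directly (after rescaling $M$ by $\min(\lambda/2,2\gamma)^{-1}$). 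For the $W^{1,p}$ bound I would add to the gradient integral a copy of the $\norm{u}^2+\norm{u}_q^q$ bound and invoke the interpolation inequality \eqref{p_inequality} (legitimate since $u\in\ltwo\cap\lq$ by well-posedness) to absorb $\norm{u(s)}_p^p$.

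It then remains to estimate the two terms coming from the right-hand side. After the $\theta_{-\tau}$ substitution, using $e^{\alpha z(\theta_{-t}\omega)}v_{\tau-t}\in D(\tau-t,\theta_{-t}\omega)$ and splitting the exponents at $s=0$, the initial term is bounded by
$$c(\tau,\omega,\alpha)\,e^{-\frac54\lambda t + 2\alpha\int_{-t}^0 z(\theta_r\omega)\,dr - 2\alpha z(\theta_{-t}\omega)}\,\norm{D(\tau-t,\theta_{-t}\omega)}^2,$$
the constant arising because $e^{\frac54\lambda(\tau-\sigma)}$ and $e^{2\alpha\int_0^{\sigma-\tau}z(\theta_r\omega)\,dr}$ are uniformly bounded for $\sigma\in[\tau-1,\tau]$; by the defining property \eqref{Dom11} of $\cald$ (taken at $s=-t$) this tends to $0$, so there is $T=T(\tau,\omega,D,\alpha)>0$ such that for $t\ge T$ it is $\le c$. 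The forcing integral, after extending its lower limit from $-t$ to $-\infty$ (the integrand being nonnegative), becomes exactly the integral in \eqref{est11am}; its finiteness follows from \eqref{g11} together with \eqref{zergodic}, the spare $\frac14\lambda$ of exponential decay dominating the subexponential growth of $e^{-2\alpha\int_{\sigma-\tau}^s z(\theta_r\omega)\,dr-2\alpha z(\theta_s\omega)}$, precisely as in \eqref{pest1_7} of the additive case. Combining these bounds gives all three estimates with $M$ as in \eqref{est11am}.

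The main obstacle I anticipate is purely bookkeeping: tracking the many exponential weights $e^{\pm\alpha z}$, $e^{\pm2\alpha z}$, $e^{\alpha(p-2)z}$, $e^{\alpha(q-2)z}$ consistently through the conjugation, the integrating factor, the $\omega\mapsto\theta_{-\tau}\omega$ shift and the $v\leftrightarrow u$ conversion, so that each exponent lands in precisely the stated form; and verifying convergence of the improper integral in $M$, which uses the sublinear growth of both $\int_0^s z(\theta_r\omega)\,dr$ and $z(\theta_s\omega)$ from \eqref{zergodic}. No genuinely new analytic difficulty arises relative to the additive case.
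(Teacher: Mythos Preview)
Your proposal is correct and follows essentially the same route as the paper: test \eqref{veq11} with $v$, use \eqref{f11} on the nonlinearity (so only $\psi_1$ appears), apply Young's inequality to the $g$-term, multiply by the integrating factor $e^{\frac54\lambda t-2\alpha\int_0^t z(\theta_r\omega)\,dr}$, integrate on $[\tau-t,\sigma]$, shift $\omega\mapsto\theta_{-\tau}\omega$, convert $v$ to $u$ via \eqref{uv1}, kill the initial term using \eqref{Dom11}, and invoke \eqref{p_inequality} at the end for the $W^{1,p}$ bound. The paper's proof is exactly this, with the same $\frac74\lambda=\frac54\lambda+\frac12\lambda$ split (its $c_1=\min\{\tfrac12\lambda,2\gamma,2\}$) and the same appeal to \eqref{zergodic} and \eqref{g11} for convergence of the forcing integral.
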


\begin{proof}
Multiplying \eqref{veq11} through by $v$ 
and integrating over $\mathbb{R}^n$ we obtain
\be\label{v_est_011}
\frac{d}{dt} \norm{v}^2 
+ 2 e^{\alpha (p-2) \zto} \norm{\nabla v}_p^p
 + \left(\frac{7}{4}\lambda - 2 \alpha \zto\right)\norm{v}^2
  + 2\gamma e^{\alpha (q-2)\zto }\norm{v}_q^q
\ee
$$
\leq 2 e^{-2\alpha \zto} \norm{\psi_1(t,\cdot)}_1 
+ \frac{4}{\lambda} e^{-2\alpha \zto} \norm{g(t,\cdot)}^2,
$$
which along with   \eqref{uv1}  yields
\begin{equation*}
\frac{d}{dt}\norm{v}^2 
+ e^{-2\alpha\zto} \left(\frac{1}{2}\lambda\norm{u}^2
 + 2\gamma \norm{u}_q^q + 2\norm{\nabla u}_p^p\right) 
 + \left(\frac{5}{4}\lambda - 2\alpha\zto\right)\norm{v}^2
$$
$$
\leq
 e^{-2\alpha\zto}\left(2\norm{\psi_1(t,\cdot)}_1 
 + \frac{4}{\lambda}\norm{g(t,\cdot)}^2\right).
\end{equation*}
Multiply the above 
 by   $e^{\frac{5}{4}\lambda t 
 - 2\alpha \int_0^t \zro}$ and integrate 
 over $[\tau-t, \sigma]$,
  then replace $\omega$ by $\theta_{-\tau}\omega$
   in order to obtain
\begin{equation*}
\norm{v(\sigma,\tau-t,\theta_{-\tau}\omega,v_{\tau-t})}^2 
$$
$$ 
+ c_1 \int_{\tau-t}^\sigma e^{\frac{5}{4}\lambda (s-\sigma) 
- 2\alpha \int_{\sigma}^{s} z(\theta_{r-\tau}\omega) dr}
 e^{-2\alpha z(\theta_{s-\tau}\omega)} \left(\norm{u}^2 
 + \norm{u}_q^q + \norm{\nabla u}_p^p\right)ds
$$
$$
\leq e^{\frac{5}{4}\lambda (\tau-\sigma-t) 
- 2\alpha \int_{\sigma-\tau}^{-t} z(\theta_{r}\omega) dr} 
\norm{v_{\tau-t}}^2
$$
$$
 + \int_{\tau-t}^\sigma e^{\frac{5}{4}\lambda (s-\sigma) 
 - 2\alpha \int_{\sigma}^{s} z(\theta_{r-\tau}\omega) dr}
  e^{-2\alpha z(\theta_{s-\tau}\omega)}
  \left(2 \norm{\psi_1(s,\cdot)}_1
+ \frac{4}{\lambda}\norm{g(s, \cdot)}^2 \right)ds,
\end{equation*}
where $c_1 = \min\{ {\frac 12} \lambda, 2\gamma, 2\}$.
 Since $e^{\alpha z(\theta_{-t} \omega)} v_{\tau-t} \in D(\tau - t, \theta_{-t}\omega)$ we
 further get  
\begin{equation}\label{esq1}
\norm{v(\sigma,\tau-t,\theta_{-\tau}\omega,v_{\tau-t})}^2 
$$
$$ 
+ c_1 \int_{\tau-t}^\sigma e^{\frac{5}{4}
\lambda (s-\sigma) - 2\alpha \int_{\sigma}^{s}
 z(\theta_{r-\tau}\omega) dr} 
 e^{-2\alpha z(\theta_{s-\tau}\omega)}
  \left(\norm{u}^2 + \norm{u}_q^q + \norm{\nabla u}_p^p\right)ds
$$
$$
\leq e^{\frac{5}{4}\lambda (\tau-\sigma-t)
 - 2\alpha \int_{\sigma-\tau}^{-t} z(\theta_{r}\omega) dr}
  e^{-2\alpha z(\theta_{-t}\omega)} \norm{D(\tau-t,\theta_{-t}\omega)}^2
$$
$$
 + c_2  \int_{-\infty}^{\sigma-\tau} 
 e^{\frac{5}{4}\lambda (\tau-\sigma+s) 
 - 2\alpha \int_{\sigma-\tau}^s z(\theta_{r}\omega)dr } 
 e^{-2\alpha z(\theta_{s}\omega)}\left(\norm{\psi_1(s+\tau,\cdot)}_1
+ \norm{g(s+\tau, \cdot)}^2 \right)ds.
\end{equation}
Note that the last integral in \eqref{esq1} exists because of
\eqref{zergodic} and 
\eqref{g11}.
Since $D \in \cald$  and $\sigma \in [\tau -1, \tau]$, 
it follows from  \eqref{Dom1} that
  there exists  $T = T(\tau, \omega, D, \alpha)>0$
 such that for all  $t\ge T$, 
\begin{equation*}
 e^{\frac{5}{4}\lambda (\tau-\sigma-t)
 - 2\alpha \int_{\sigma-\tau}^{-t} z(\theta_{r}\omega) dr}
  e^{-2\alpha z(\theta_{-t}\omega)} \norm{D(\tau-t,\theta_{-t}\omega)}^2
     \le 1.
\end{equation*}
 which along with \eqref{esq1} and \eqref{p_inequality} completes the proof.
\end{proof}

As a special case of 
  Lemma \ref{est11} we obtain the existence of $\cald$-pullback absorbing
  sets for $\Phi$.

  \begin{lemma}
  \label{abs_alpha1}
   If  \eqref{f11}-\eqref{f31}
    and \eqref{g11} hold,  then  for every
    $\alpha > 0$,  
    the stochastic equation \eqref{seq11}
    with \eqref{seq22}  has   a closed
    measurable $\cald$-pullback absorbing set $K_\alpha=\{ K_\alpha (\tau, \omega):
    \tau \in \R, \omega \in \Omega \} \in \cald$ which  is given by
    \be\label{lem41_1_a}
    K_\alpha  (\tau, \omega) = \{ u \in \ltwo: \| u\|^2 \le
    e^{2\alpha z(\omega)}R(\alpha,\tau,\omega)
    \},
    \ee
     where $R(\alpha,\tau,\omega)$ is  given by
    \begin{equation}
    \label{Rdef1}
R(\alpha,\tau,\omega) = c + c \int_{-\infty}^{0} 
e^{\frac{5}{4}\lambda s - 2\alpha \int_{0}^s z(\theta_{r}\omega)dr }
 e^{-2\alpha z(\theta_{s}\omega)}
$$
$$\times\left(\norm{\psi_1(s+\tau,\cdot)}_1
+ \norm{g(s+\tau, \cdot)}^2 \right)ds
\end{equation}
for some positive constant $c$ depending only 
on $\lambda$, $q$, and $\gamma$.
  \end{lemma}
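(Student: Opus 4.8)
The plan is to derive everything from Lemma \ref{est11} with $\sigma=\tau$, mirroring the way Lemma \ref{lem41} was obtained from Lemma \ref{est2} in the additive case. Fix $D=\{D(\tau,\omega):\tau\in\R,\ \omega\in\Omega\}\in\cald$, take $u_{\tau-t}\in D(\tau-t,\theta_{-t}\omega)$, and set $v_{\tau-t}=e^{-\alpha z(\theta_{-t}\omega)}u_{\tau-t}$, so that $e^{\alpha z(\theta_{-t}\omega)}v_{\tau-t}\in D(\tau-t,\theta_{-t}\omega)$ as required by Lemma \ref{est11}. Applying that lemma with $\sigma=\tau$ (allowed since $\tau\in[\tau-1,\tau]$) yields $T=T(\tau,\omega,D,\alpha)>0$ such that
\[
\norm{v(\tau,\tau-t,\theta_{-\tau}\omega,v_{\tau-t})}^2\le R(\alpha,\tau,\omega)\qquad\text{for all }t\ge T,
\]
because the constant $M$ in \eqref{est11am} collapses to $R(\alpha,\tau,\omega)$ of \eqref{Rdef1} when $\sigma=\tau$.

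Next I would undo the change of variables \eqref{uv1}: evaluating it at time $\tau$ with base time $\tau-t$ and $\theta_{-\tau}\omega$ in place of $\omega$ gives $u(\tau,\tau-t,\theta_{-\tau}\omega,u_{\tau-t})=e^{\alpha z(\omega)}v(\tau,\tau-t,\theta_{-\tau}\omega,v_{\tau-t})$, whence
\[
\norm{u(\tau,\tau-t,\theta_{-\tau}\omega,u_{\tau-t})}^2\le e^{2\alpha z(\omega)}R(\alpha,\tau,\omega)\qquad\text{for all }t\ge T,
\]
i.e. $u(\tau,\tau-t,\theta_{-\tau}\omega,u_{\tau-t})\in K_\alpha(\tau,\omega)$. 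Since the cocycle $\Phi$ associated with \eqref{seq11}--\eqref{seq22} satisfies $\Phi(t,\tau-t,\theta_{-t}\omega,\cdot)=u(\tau,\tau-t,\theta_{-\tau}\omega,\cdot)$ (the multiplicative-noise counterpart of \eqref{pcycle2}), this says precisely that $\Phi(t,\tau-t,\theta_{-t}\omega,D(\tau-t,\theta_{-t}\omega))\subseteq K_\alpha(\tau,\omega)$ for all $t\ge T$, so $K_\alpha$ is $\cald$-pullback absorbing.

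It then remains to show $K_\alpha\in\cald$ and that $K_\alpha$ is closed and measurable. Closedness is immediate, $K_\alpha(\tau,\omega)$ being a ball in $\ltwo$, and measurability of $\omega\mapsto K_\alpha(\tau,\omega)$ follows from that of $\omega\mapsto z(\omega)$ and of $\omega\mapsto R(\alpha,\tau,\omega)$, the latter being a finite integral (finite by \eqref{zergodic} and \eqref{g11}) of jointly measurable integrands. For $K_\alpha\in\cald$ I would verify \eqref{Dom11}: since $\norm{K_\alpha(\tau+s,\theta_s\omega)}^2=e^{2\alpha z(\theta_s\omega)}R(\alpha,\tau+s,\theta_s\omega)$, the factor $e^{-2\alpha z(\theta_s\omega)}$ in \eqref{Dom11} cancels and it suffices to prove $e^{\frac{5}{4}\lambda s-2\alpha\int_0^s z(\theta_\xi\omega)\,d\xi}\,R(\alpha,\tau+s,\theta_s\omega)\to0$ as $s\to-\infty$. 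By shifting the integration variable by $s$ inside \eqref{Rdef1} (exactly the computation used to prove \eqref{est2_3}) this product equals $c\,e^{\frac{5}{4}\lambda s-2\alpha\int_0^s z(\theta_\xi\omega)\,d\xi}$ plus the tail $c\int_{-\infty}^{s}e^{\frac{5}{4}\lambda\rho-2\alpha\int_0^\rho z(\theta_r\omega)\,dr}\,e^{-2\alpha z(\theta_\rho\omega)}(\norm{\psi_1(\rho+\tau,\cdot)}_1+\norm{g(\rho+\tau,\cdot)}^2)\,d\rho$ of the convergent integral defining $R(\alpha,\tau,\omega)$; the tail vanishes as $s\to-\infty$, and the first term vanishes too because $\frac{1}{s}\int_0^s z(\theta_\xi\omega)\,d\xi\to0$ by \eqref{zergodic}, so its exponent is eventually dominated by $\frac{5}{4}\lambda s$.

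The only genuinely nonroutine point is this last verification that $K_\alpha\in\cald$: one has to match the weight in \eqref{Dom11} --- which mixes $\frac{5}{4}\lambda s$, the running integral of $z$, and the pointwise value $z(\theta_s\omega)$ --- against the precise exponential structure of $R$ in \eqref{Rdef1}, and invoke the growth and ergodic properties \eqref{zergodic} of the Ornstein--Uhlenbeck process, just as in the proof of \eqref{est2_3}. All remaining steps are a direct transcription of the additive-noise argument.
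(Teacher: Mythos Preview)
Your proposal is correct and follows essentially the same route as the paper: apply Lemma \ref{est11} with $\sigma=\tau$, undo the transformation \eqref{uv1} to pass from $v$ to $u$, and then verify $K_\alpha\in\cald$ by shifting the integration variable in \eqref{Rdef1} so that the quantity in \eqref{Dom11} splits into a decaying exponential term plus the tail of the convergent integral, both handled via \eqref{zergodic}. The paper's own proof is in fact slightly terser---it does not spell out closedness or measurability---so your version is, if anything, a bit more complete.
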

  
  \begin{proof}
  It follows from Lemma \ref{est11} with $\sigma =\tau$
  that
  there is   $T = T(\tau, \omega, D, \alpha) > 0$
   such that for all  $t\geq T$, the solution $v$ of  
    \eqref{veq11}-\eqref{veq22} satisfies
$$
\norm{v(\tau,\tau-t,\theta_{-\tau}\omega,v_{\tau-t})}^2 
 \leq R(\alpha,\tau, \omega)
$$
for all $e^{\alpha z(\theta_{-t}\omega)}
   v_{\tau-t} \in D(\tau-t, \theta_{-t}\omega)$, which along with
      \eqref{uv1}  indicates that  for all $t \ge T$ and   $u_{\tau -t}
   \in D(\tau-t, \theta_{-t} \omega)$,
    $$
    \norm{u(\tau, \tau -t, \theta_{-\tau} \omega, u_{\tau -t})}
    ^2
    \le e^{2\alpha z(\omega)}R(\alpha,\tau,\omega).
    $$
    Therefore,  by 
    \eqref{pcycle2} and \eqref{lem41_1_a}  we get,  
       for all $t \ge T$,
    \be\label{leabsapha1_p1}
    \Phi (t, \tau -t, \theta_{-t} \omega,
    D(\tau-t, \theta_{-t} \omega) )
    \subseteq K_\alpha (\tau, \omega).
    \ee
  By \eqref{Rdef1}  we have
\begin{equation*}
R(\alpha,\tau-t,\theta_{-t}\omega) = c 
$$
$$ 
+ c \int_{-\infty}^{0} e^{\frac{5}{4}\lambda s
 - 2\alpha \int_{0}^s z(\theta_{r-t}\omega)dr } 
 e^{-2\alpha z(\theta_{s-t}\omega)}
 \left(\norm{\psi_1(s+\tau-t,\cdot)}_1
+ \norm{g(s+\tau-t, \cdot)}^2 \right)ds
$$
$$
= c + c \int_{-\infty}^{-t} e^{\frac{5}{4}
\lambda (s+t) - 2\alpha \int_{-t}^s z(\theta_{r}\omega)dr }
 e^{-2\alpha z(\theta_{s}\omega)}
 \left(\norm{\psi_1(s+\tau,\cdot)}_1
+ \norm{g(s+\tau, \cdot)}^2 \right)ds.
\end{equation*}
This implies 
\begin{equation*}
e^{-\frac{5}{4}t - 2\alpha \int_{0}^{-t} z(\theta_r \omega) dr -2\alpha z(\theta_{-t} \omega )}
\| K_\alpha (\tau -t, \theta_{-t} \omega ) \|^2
$$
$$
=
e^{-\frac{5}{4}t - 2\alpha \int_{0}^{-t} z(\theta_r \omega) dr }
 R(\alpha,\tau-t,\theta_{-t}\omega) 
 = c e^{-\frac{5}{4}t - 2\alpha \int_{0}^{-t} z(\theta_r \omega) dr} 
 $$
 $$
+ c \int_{-\infty}^{-t} e^{\frac{5}{4}\lambda s
 - 2\alpha \int_{0}^s z(\theta_{r}\omega)dr }
  e^{-2\alpha z(\theta_{s}\omega)}
  \left(\norm{\psi_1(s+\tau,\cdot)}_1
+ \norm{g(s+\tau, \cdot)}^2 \right)ds
\end{equation*}
which along with  \eqref{zergodic}
and the convergence of 
  the integrals in \eqref{Rdef1} yields
  \be 
  \label{leabsapha1_p2}
\lim_{t \to \infty}
e^{-\frac{5}{4}t - 2\alpha \int_{0}^{-t} z(\theta_r \omega) dr -2\alpha z(\theta_{-t} \omega )} 
\| K_\alpha (\tau -t, \theta_{-t} \omega ) \|^2
=0.
\ee
By \eqref{leabsapha1_p1}-\eqref{leabsapha1_p2} we find   that
$K_\alpha \in \cald$ is a $\cald$-pullback absorbing set of $\Phi$.
\end{proof}

We will need the following uniform estimates 
on the tails of solutions to \eqref{veq11} in order to
establish the asymptotic compactness
of solutions in
$\ltwo$.

\begin{lemma}\label{est21}
If  \eqref{f11}-\eqref{f31} and \eqref{g11} hold,
then  for  every $\eta > 0$,
 $\tau \in \mathbb{R}$,
 $\sigma \in [\tau-1,\tau]$, 
 $\omega \in \Omega$, 
 $\alpha > 0$  and  $D \in \cald$, 
 there exists  $T = T(\tau, \omega, D, \alpha,  \eta) > 0$
  and $K = K(\tau, \omega, \alpha, \eta) \geq 1$ such that for all $t\geq T$, 
\begin{equation*}
\int_{|x|\geq K} |v(\sigma, \tau-t, \theta_{-\tau}\omega, v_{\tau-t})|^2 dx \leq \eta
\end{equation*}
where $e^{\alpha z(\theta_{-t}\omega)} v_{\tau-t} \in D(\tau-t, \theta_{-t} \omega)$.
\end{lemma}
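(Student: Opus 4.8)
The plan is to adapt the cut-off argument of Lemma \ref{est3} from the additive case, with one simplification: the Ornstein--Uhlenbeck change of variable \eqref{uv1} is here multiplicative, $v=e^{-\alpha z(\theta_t\omega)}u$, so no ``tails of $h$'' enter and the transformed equation \eqref{veq11} carries no spatially localized forcing. Let $\rho$ be the smooth cut-off with $0\le\rho\le 1$, $\rho(s)=0$ for $0\le s\le 1$ and $\rho(s)=1$ for $s\ge 2$, exactly as in the proof of Lemma \ref{est3}. First I would multiply \eqref{veq11} by $\rho(|x|^2/k^2)v$ and integrate over $\R^n$, obtaining an energy identity for $\int_{\R^n}\rho(|x|^2/k^2)|v|^2\,dx$.

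I would then estimate the individual terms. For the $p$-Laplacian contribution, integration by parts produces the nonnegative term $e^{\alpha(p-2)z(\theta_t\omega)}\int_{\R^n}\rho(|x|^2/k^2)|\nabla v|^p\,dx$ (which is discarded) plus an error carrying $\nabla\rho(|x|^2/k^2)=\rho'(|x|^2/k^2)\tfrac{2x}{k^2}$, supported on $\{k\le|x|\le 2k\}$; by Young's inequality this error is $\le\tfrac{c}{k}\,e^{\alpha(p-2)z(\theta_t\omega)}\big(\norm{v}_p^p+\norm{\nabla v}_p^p\big)$. For the nonlinearity I would write $e^{-\alpha z(\theta_t\omega)}f(t,x,e^{\alpha z(\theta_t\omega)}v)\,v=e^{-2\alpha z(\theta_t\omega)}f(t,x,e^{\alpha z(\theta_t\omega)}v)\,(e^{\alpha z(\theta_t\omega)}v)$ and apply \eqref{f11}, which after discarding the good term $-\gamma e^{\alpha(q-2)z(\theta_t\omega)}\int\rho|v|^q$ leaves $\le c\,e^{-2\alpha z(\theta_t\omega)}\int_{|x|\ge k}|\psi_1(t,x)|\,dx$. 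The linear term $\alpha z(\theta_t\omega)\int\rho|v|^2$ combines with $-\lambda\int\rho|v|^2$ from the left side, and $e^{-\alpha z(\theta_t\omega)}g$ is treated by Young's inequality (weakening $\tfrac74\lambda$ to $\tfrac54\lambda$ on the left exactly as in \eqref{v_est_011}), leaving $c\,e^{-2\alpha z(\theta_t\omega)}\int_{|x|\ge k}|g(t,x)|^2\,dx$. Collecting everything gives a differential inequality
$$
\frac{d}{dt}\int_{\R^n}\rho(\tfrac{|x|^2}{k^2})|v|^2\,dx+\Big(\tfrac54\lambda-2\alpha z(\theta_t\omega)\Big)\int_{\R^n}\rho(\tfrac{|x|^2}{k^2})|v|^2\,dx
$$
$$
\le \frac{c}{k}\,e^{\alpha(p-2)z(\theta_t\omega)}\big(\norm{v}_p^p+\norm{\nabla v}_p^p\big)+c\,e^{-2\alpha z(\theta_t\omega)}\int_{|x|\ge k}\big(|\psi_1(t,x)|+|g(t,x)|^2\big)\,dx.
$$

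Next I would multiply by $e^{\frac54\lambda t-2\alpha\int_0^t z(\theta_r\omega)\,dr}$, integrate from $\tau-t$ to $\sigma$, and replace $\omega$ by $\theta_{-\tau}\omega$, producing three groups of terms. The initial-data term is bounded by $e^{\frac54\lambda(\tau-\sigma)-2\alpha\int_{\sigma-\tau}^0 z(\theta_r\omega)\,dr}$ times $e^{-\frac54\lambda t+2\alpha\int_{-t}^0 z(\theta_r\omega)\,dr-2\alpha z(\theta_{-t}\omega)}\norm{D(\tau-t,\theta_{-t}\omega)}^2$, using $e^{\alpha z(\theta_{-t}\omega)}v_{\tau-t}\in D(\tau-t,\theta_{-t}\omega)$; the first factor is bounded uniformly for $\sigma\in[\tau-1,\tau]$, and the second tends to $0$ as $t\to\infty$ precisely by the defining condition \eqref{Dom11} of $\cald$. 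For the cut-off error term I would use $v=e^{-\alpha z(\theta_{s-\tau}\omega)}u$ to rewrite $e^{\alpha(p-2)z(\theta_{s-\tau}\omega)}\big(\norm{v}_p^p+\norm{\nabla v}_p^p\big)=e^{-2\alpha z(\theta_{s-\tau}\omega)}\norm{u}_{W^{1,p}}^p$, so this term equals $\tfrac{c}{k}$ times the weighted integral already bounded by $M$ in Lemma \ref{est11}, hence is $\le cM/k\to 0$ as $k\to\infty$. For the forcing term, extending the lower limit of integration to $-\infty$ and invoking \eqref{zergodic} together with \eqref{g11} shows the corresponding integral with $\int_{|x|\ge k}$ replaced by $\int_{\R^n}$ is finite, so by dominated convergence its $\{|x|\ge k\}$ tail tends to $0$ as $k\to\infty$.

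Finally, given $\eta>0$ I would first fix $K\ge 1$ large enough that, for all $k\ge K$, the cut-off error term and the forcing term are each at most $\eta/3$ (uniformly in $t$ and $\sigma\in[\tau-1,\tau]$), and then choose $T$ so that the initial-data term is at most $\eta/3$ for all $t\ge T$; since $\rho(|x|^2/k^2)=1$ whenever $|x|\ge\sqrt2\,k$, this yields $\int_{|x|\ge\sqrt2\,K}|v(\sigma,\tau-t,\theta_{-\tau}\omega,v_{\tau-t})|^2\,dx\le\eta$, which is the asserted estimate after renaming $\sqrt2\,K$ as $K$. The main obstacle I anticipate is bookkeeping: making the stochastic exponential weights produced by the differential inequality line up exactly with those in Lemma \ref{est11} after the substitution $v=e^{-\alpha z}u$, and checking that all the weighted time-integrals converge via \eqref{zergodic} and \eqref{g11}; once the weights match, the proof reduces to the same three-term estimate used repeatedly throughout the chapter.
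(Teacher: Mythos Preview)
Your proposal is correct and follows essentially the same route as the paper: multiply \eqref{veq11} by $\rho(|x|^2/k^2)v$, estimate the $p$-Laplacian commutator by $\tfrac{c}{k}e^{-2\alpha z(\theta_t\omega)}\|u\|_{W^{1,p}}^p$, apply \eqref{f11} to the nonlinearity, integrate with the weight $e^{\frac54\lambda t-2\alpha\int_0^t z(\theta_r\omega)\,dr}$, then kill the initial-data term via \eqref{Dom11}, the $\tfrac{c}{k}M$ term via Lemma \ref{est11}, and the forcing tail via the finiteness of the integral in \eqref{est11am}. Your identification $e^{\alpha(p-2)z}(\|v\|_p^p+\|\nabla v\|_p^p)=e^{-2\alpha z}\|u\|_{W^{1,p}}^p$ is exactly the bookkeeping step that makes the match with Lemma \ref{est11} work, and the paper uses it implicitly in the same way.
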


\begin{proof}
Let $\rho:\mathbb{R}^+ \rightarrow [0, 1]$ be a smooth function which satisfies
\begin{equation*}
\rho(s) = \begin{cases} 0 & 0\leq s \leq 1, \\
                        1 & s\geq 2.
                        \end{cases}
\end{equation*}
 Multiplying \eqref{veq11} through 
 	by $\pxk v$ and integrating over $x \in \mathbb{R}^n$ 
 	we obtain
\be\label{tail_est01}
\frac{1}{2}\frac{d}{dt} 
\inp  v^2 dx - e^{\alpha (p-2)\zto}
 \inp v \text{ div}(|\nabla v|^{p-2} \nabla v) dx 
$$
$$ 
 + (\lambda - \alpha \zto)\inp v^2 dx
$$
$$
= e^{-2 \alpha \zto} \inp f(t,x,u) u dx + e^{-\alpha \zto} \inp g(t, x) v dx.
\ee
First, we have  
\be\label{div_est11}
\inp v \text{ div}(|\nabla v|^{p-2} \nabla v) dx 
$$
$$
\leq -\inp |\nabla v|^p dx 
- \frac{2}{k^2}\int_{k\leq |x| \leq \sqrt{2} k} |v| \rho'(\frac{|x|^2}{k^2})
   |\nabla v|^{p-2} |( x, \nabla v) | dx
$$
$$
\leq \frac{c}{k}
 \int_{k\leq |x| \leq \sqrt{2} k} |v|  |\nabla v|^{p-1} dx
\leq\frac{c_1}{k}\left( \norm{v}^p_p 
+ \norm{\nabla v}^p_p\right).
\ee
 Further, by Young's inequality, we get
\be\label{g_est21}
\inp e^{-\alpha \zto} |g v| dx 
$$
$$
\leq \frac{3\lambda}{8} \inp |v|^2 dx 
+ \frac{2}{3\lambda} e^{-2\alpha \zto} \inp g^2(t,x) dx.
\ee
Hence, using \eqref{f11} and \eqref{div_est11}-\eqref{g_est21}, 
we obtain from \eqref{tail_est01}  that 
\begin{equation*}
\frac{d}{dt}\inp  v^2 dx + 2\gamma e^{-2\alpha \zto}
 \inp |u|^q dx 
$$
$$
+\left(\frac{5}{4}\lambda 
 - 2 \alpha \zto\right)\inp v^2 dx 
\leq \frac{2c_1}{k} e^{-2\alpha \zto}
\left(\norm{u}_p^p + \norm{\nabla u}_p^p\right) 
$$
$$ 
+ 2 e^{-2\alpha \zto} \left(\inp \psi_1(t,x) dx 
+ \frac{2}{3\lambda} \inp g^2(t,x) dx \right).
\end{equation*}
Multiply  the above by
 $e^{\frac{5}{4}\lambda t - 2\alpha \int_0^t z(\theta_{r}\omega)dr}$
  and integrate over $[\tau-t, \sigma]$,
  then replace 
  $\omega$ by $\theta_{-\tau} \omega$
   in order to obtain  
\begin{equation*}
\inp |v(\sigma ,\tau-t,\theta_{-\tau}\omega,v_{\tau-t})|^2 dx 
$$
$$
- e^{\frac{5}{4}\lambda (\tau-\sigma-t)
 - 2\alpha \int_{\sigma-\tau}^{-t} \zro} \inp |v_{\tau-t}|^2 dx
$$
$$
\leq \frac{2 c_1}{k} \int_{\tau-t}^\sigma 
e^{\frac{5}{4}\lambda (s-\sigma) 
- 2\alpha \int_{\sigma}^{s} z(\theta_{r-\tau}\omega) dr}
 e^{-2\alpha z(\theta_{s-\tau}\omega)}
 \norm{u(s,\tau-t,\theta_{-\tau}\omega,v_{\tau-t})}_{W^{1,p}}^p ds
$$
$$
+ 2 \int_{\tau-t}^\sigma e^{\frac{5}{4}\lambda (s-\sigma) 
- 2\alpha \int_{\sigma}^{s} z(\theta_{r-\tau}\omega) dr}
 e^{-2\alpha z(\theta_{s-\tau}\omega)} 
$$
$$ 
\times\left(\inp \psi_1(s,x) dx 
  + \frac{2}{3\lambda} \inp g^2(s,x) dx \right)ds,
\end{equation*}
which along with Lemma    \ref{est11}  implies that
there exists $T_1 =T_1(\tau, \omega, D, \alpha) >0$ such that
for all $t \ge T_1$, 
\be\label{pv_Est}
\inp |v(\sigma,\tau-t,\theta_{-\tau}\omega,v_{\tau-t})|^2 dx 
$$
$$
\leq e^{\frac{5}{4}\lambda (\tau-\sigma-t) 
- 2\alpha \int_{\sigma-\tau}^{-t} \zro}
 \inp |v_{\tau-t}|^2 dx + \frac{2c_1}{k}M
$$
$$
+ 2 \int_{-\infty}^{\sigma-\tau} 
e^{\frac{5}{4}\lambda (\tau-\sigma+s) 
- 2\alpha \int_{\sigma-\tau}^{s} z(\theta_{r}\omega) dr} 
e^{-2\alpha z(\theta_{s}\omega)}
$$
$$ 
\times \left(\int_{|x|\geq k} \psi_1(s+\tau,x) dx 
 + \frac{2}{3\lambda} \int_{|x|\geq k} g^2(s+\tau,x) dx \right)ds,
\ee
where $M$ is the number given by \eqref{est11am}.
 Since $e^{\alpha z(\theta_{-t}\omega)}
 v_{\tau-t}\in D(\tau-t,\theta_{-t}\omega)$, 
 by \eqref{Dom1} there is 
  $T_2 = T_2(\tau,\omega,D, \alpha, \eta)   \ge T_1$
   such that for all $t\geq T_2$,
\begin{equation}\label{vtt_Est}
e^{\frac{5}{4}\lambda (\tau-\sigma-t) 
- 2\alpha \int_{\sigma-\tau}^{-t} \zro} \inp |v_{\tau-t}|^2 dx 
$$
$$
\leq e^{-\frac{5}{4}\lambda t 
- 2\alpha \int_{0}^{-t} z(\theta_{r}\omega) dr} 
e^{-2\alpha z(\theta_{-t}\omega)}
 \norm{D(\tau-t,\theta_{-t}\omega)}^2 < \eta.
\end{equation}
 Since the integral in \eqref{est11am} exists, we find that
there exists $K=K(\tau, \omega, \alpha, \eta) \ge 1$ such  that
both
  $ \frac{2c_1}{k}M$ and the last integral in \eqref{pv_Est}
  are bounded by $\eta$ for all $k \ge K$.
  This   together with \eqref{pv_Est} and 
  \eqref{vtt_Est} implies that for all $t\geq T_2$ and $k \geq K$,
\begin{equation*}
\int_{|x|\geq \sqrt{2} k}
 |v(\sigma,\tau-t, \theta_{-\tau}\omega,v_{\tau-t})|^2 dx 
 \leq \inp |v(\sigma,\tau-t, \theta_{-\tau}\omega,v_{\tau-t})|^2 dx < 3\eta,
\end{equation*}
as desired.  
\end{proof}

We now prove the  
 asymptotic compactness of solutions of equation  \eqref{veq11}
in $\ltwo$. 

\begin{lemma}
\label{asyv1}
If  \eqref{f11}-\eqref{f31}  and \eqref{g11} hold, 
then  for every $\alpha >0$,
 $\tau \in \R$, $\omega \in \Omega$   and $D  \in \cald$,
 the sequence
 $v(\tau, \tau -t_n,  \theta_{-\tau} \omega,   v_{0,n}  ) $   has a
   convergent
subsequence in $\ltwo $ provided
  $t_n \to \infty$ and 
 $ v_{0,n}e^{\alpha z(\theta_{-t_n}\omega)}  \in D(\tau -t_n, \theta_{ -t_n} \omega )$.
 \end{lemma}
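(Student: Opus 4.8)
The plan is to transcribe the proof of Lemma~\ref{asyv} to the multiplicative equation \eqref{veq11}, replacing the additive tools by their multiplicative analogues. The ingredients I would use are: the uniform bound of Lemma~\ref{est11}, the bounded-domain precompactness of Lemma~\ref{comv11}, the tail estimate of Lemma~\ref{est21}, and the Lipschitz dependence of solutions of \eqref{veq11}--\eqref{veq22} on initial data in $\ltwo$ obtained in \cite{krause2}, which I write as
\be
\norm{v_1(t,\tau,\omega,v_{1,\tau}) - v_2(t,\tau,\omega,v_{2,\tau})}^2 \le e^{c(t-\tau)}\,\norm{v_{1,\tau}-v_{2,\tau}}^2,\qquad t\in[\tau,\tau+T],
\ee
for a constant $c = c(\tau,T,\omega)>0$.

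First I would apply Lemma~\ref{est11} with $\sigma=\tau-1$ to get $N_1=N_1(\tau,\omega,D,\alpha)$ and $c_1>0$ with $\norm{v(\tau-1,\tau-t_n,\theta_{-\tau}\omega,v_{0,n})}\le c_1$ for all $n\ge N_1$; this yields a bounded sequence of data at the fixed time $\tau-1$. Applying Lemma~\ref{comv11} to the solutions $v(\cdot,\tau-1,\theta_{-\tau}\omega,\cdot)$ issued from this bounded sequence, I obtain $s_0\in(\tau-1,\tau)$, an element $v_0\in\ltwo$, and a subsequence (not relabelled) such that
\be
v(s_0,\tau-t_n,\theta_{-\tau}\omega,v_{0,n})\to v_0 \quad\text{in }L^2(\o_k)\text{ for every }k\in\N .
\ee
Next, using the cocycle identity $v(\tau,\tau-t_n,\theta_{-\tau}\omega,v_{0,n}) = v(\tau,s_0,\theta_{-\tau}\omega,v(s_0,\tau-t_n,\theta_{-\tau}\omega,v_{0,n}))$ together with the Lipschitz estimate above on $[s_0,\tau]$ (whose exponential prefactor is bounded uniformly because $\tau-s_0<1$), I would reduce the problem to showing that $\norm{v(s_0,\tau-t_n,\theta_{-\tau}\omega,v_{0,n})-v_0}\to0$ in $\ltwo$.

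To establish this, split the norm into $\{|x|<K\}$ and $\{|x|\ge K\}$. Given $\nu>0$: since $v_0\in\ltwo$, choose $K_1$ with $\int_{|x|\ge K_1}|v_0|^2\,dx\le\nu$; by Lemma~\ref{est21} applied with $\sigma=s_0\in[\tau-1,\tau]$ there are $N_2$ and $K_2\ge K_1$ with $\int_{|x|\ge K_2}|v(s_0,\tau-t_n,\theta_{-\tau}\omega,v_{0,n})|^2\,dx\le\nu$ for $n\ge N_2$; and by the $L^2(\o_{K_2})$-convergence already obtained there is $N_3\ge N_2$ with $\int_{|x|<K_2}|v(s_0,\tau-t_n,\theta_{-\tau}\omega,v_{0,n})-v_0|^2\,dx\le\nu$ for $n\ge N_3$. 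Adding these gives a bound $C\nu$ for $n\ge N_3$, hence $v(\tau,\tau-t_n,\theta_{-\tau}\omega,v_{0,n})$ converges in $\ltwo$ to $v(\tau,s_0,\theta_{-\tau}\omega,v_0)$, which completes the argument.

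The main obstacle is bookkeeping rather than a new idea: one must ensure that the constants coming out of Lemmas~\ref{est11} and \ref{est21} and the Lipschitz prefactor are genuinely uniform along the subsequence, which forces one to control the Ornstein--Uhlenbeck factors $e^{\pm\alpha\zto}$ and $e^{\alpha(p-2)\zto}$ appearing in \eqref{veq11} on the fixed compact interval $[\tau-1,\tau]$ --- this is fine since $z(\theta_t\omega)$ is continuous in $t$. A secondary technical point is that Lemma~\ref{est21} is phrased for $v$ while Lemma~\ref{est11}'s higher-order bounds are on $u=e^{\alpha z}v$; passing between them only costs a factor bounded on $[\tau-1,\tau]$. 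Everything else is a faithful copy of the additive-noise proof.
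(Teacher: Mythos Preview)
Your proposal is correct and follows essentially the same route as the paper's proof: bound at time $\tau-1$ via Lemma~\ref{est11}, extract bounded-domain convergence at some $s_0\in(\tau-1,\tau)$ via Lemma~\ref{comv11}, control tails via Lemma~\ref{est21}, and propagate to time $\tau$ by the Lipschitz estimate on initial data. The only cosmetic difference is ordering: the paper first combines the bounded-domain convergence with the tail estimate to get full $\ltwo$-convergence at $s_0$ and then applies the Lipschitz bound once, whereas you apply the Lipschitz bound first and then split into near/far regions --- the content is identical.
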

 
 \begin{proof}
 Let $\widetilde{v}_n = v(\tau -1,  \tau -t_n, \theta_{-\tau} \omega, v_{0,n} )$.
 It follows from 
  Lemma \ref{est11}  that   
  $\{\widetilde{v}_n\}_{n=1}^\infty$ is bounded in $\ltwo$.
  Therefore, by   Corollary  \ref{comv1},  
   there exist $r  \in (\tau -1, \tau)$
   and $ \xi  \in \ltwo$ such that,  up to a subsequence,
    $$
    v(r,    \tau -t_n, \theta_{-\tau} \omega, v_{0,n} )
    = 
    v(r,  \tau - 1, \theta_{-\tau} \omega,
    \widetilde{v} _n )
    \to  \xi 
    \quad \text{ in } L^2(\o_k)
    \text{ for  all }  k \in \N,
   $$
   which along with Lemma \ref{est21} shows that
     \be
    \label{pav_21}
    v(r,    \tau -t_n, \theta_{-\tau} \omega, v_{0,n} )
     \to  \xi 
    \quad \text{ in } \ltwo.
  \ee
  On the other hand, 
  by continuity in initial data and 
  the fact  $r \in (\tau-1, \tau)$  we get 
    $$
    \norm{ v(\tau,  r, \theta_{-\tau} \omega,
    v(r,  \tau -t_n, \theta_{-\tau} \omega, v_{0,n} ) ) 
    - v(\tau, r,  \theta_{-\tau} \omega, \xi)} 
    $$\be\label{pav_81}\le
    c  \norm{  
   v(r,  \tau -t_n, \theta_{-\tau} \omega, v_{0,n} )  - \xi  
    }.
   \ee
    Hence, by \eqref{pav_21} and \eqref{pav_81} we obtain 
    $$
   v(\tau,  \tau -t_n, \theta_{-\tau} \omega, v_{0,n} )
   $$
$$   =    v(\tau,  r, \theta_{-\tau} \omega,
    v(r,  \tau -t_n, \theta_{-\tau} \omega, v_{0,n} ) ) 
    \to  v(\tau, r,  \theta_{-\tau} \omega, \xi)
    \ \ \mbox{in } \ \ltwo
    $$
    which completes    the proof.
   \end{proof}

    We now present the existence and  uniqueness of $\cald$-pullback attractors
    of equation   \eqref{seq11} in $\ltwo$.

    \begin{theorem}
    \label{eatta1}
    If   \eqref{f11}-\eqref{f31}  and \eqref{g11} hold,
    then  for every $\alpha > 0$,
   system \eqref{seq11}-\eqref{seq22}  
   possesses  a unique $\cald$-pullback attractor $\cala
   =\{\cala(\tau, \omega):
      \tau \in \R, \ \omega \in \Omega \} \in \cald$
  in $\ltwo$.    Moreover,   if there is $T>0$  such that
 $f(\cdot, x,s)$, $g(\cdot, x)$ and $\psi_1(\cdot,x)$ are 
 periodic in their first argument with period $T$
 for  every  fixed $x \in \R^n$
 and $s\in \R$,   then the attractor  
 is also $T$-periodic; that is,
 $\cala(\tau +T, \omega) =\cala(\tau, \omega)$
 for all $\tau \in \R$  and $\omega \in \Omega$.
 \end{theorem}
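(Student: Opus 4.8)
The plan is to obtain Theorem~\ref{eatta1} from Proposition~\ref{att} in exactly the way Theorem~\ref{eatt} was obtained in the additive case, once the two hypotheses of that proposition are verified for the cocycle $\Phi$ attached to \eqref{seq11}-\eqref{seq22} through the conjugation $v = e^{-\alpha z(\theta_t\omega)}u$ of \eqref{uv1}. Lemma~\ref{abs_alpha1} already supplies a closed measurable $\cald$-pullback absorbing set $K_\alpha \in \cald$, so the only real work is to establish $\cald$-pullback asymptotic compactness of $\Phi$ in $\ltwo$.

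For that, fix $\tau \in \R$, $\omega \in \Omega$, $D \in \cald$, $t_n \to \infty$ and $u_{0,n} \in D(\tau-t_n,\theta_{-t_n}\omega)$. Put $v_{0,n} = e^{-\alpha z(\theta_{-t_n}\omega)}u_{0,n}$ and let $\widetilde{D}$ be the family with $\widetilde{D}(\tau,\omega) = \{e^{-\alpha z(\omega)}u : u \in D(\tau,\omega)\}$. Because $z$ is tempered (see \eqref{zergodic}), the extra factor $e^{-2\alpha z(\theta_s\omega)}$ relating $\|\widetilde{D}(\tau+s,\theta_s\omega)\|^2$ to $\|D(\tau+s,\theta_s\omega)\|^2$ is $e^{o(|s|)}$ as $s \to -\infty$, which is negligible against the weight $e^{\frac{5}{4}\lambda s}$ appearing in \eqref{Dom11}; hence $D \in \cald$ forces $\widetilde{D} \in \cald$, and $v_{0,n}e^{\alpha z(\theta_{-t_n}\omega)} \in D(\tau-t_n,\theta_{-t_n}\omega)$ with $v_{0,n}$ lying in the admissible universe. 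Lemma~\ref{asyv1} then yields a subsequence along which $v(\tau,\tau-t_n,\theta_{-\tau}\omega,v_{0,n})$ converges in $\ltwo$. Multiplying by the fixed factor $e^{\alpha z(\omega)}$ and invoking the cocycle identity $\Phi(t_n,\tau-t_n,\theta_{-t_n}\omega,u_{0,n}) = u(\tau,\tau-t_n,\theta_{-\tau}\omega,u_{0,n}) = e^{\alpha z(\omega)}v(\tau,\tau-t_n,\theta_{-\tau}\omega,v_{0,n})$ (the multiplicative analogue of \eqref{pcycle2}, from \cite{krause2}) shows that the corresponding subsequence of $\Phi(t_n,\tau-t_n,\theta_{-t_n}\omega,u_{0,n})$ converges in $\ltwo$. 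Thus $\Phi$ is $\cald$-pullback asymptotically compact, and together with the absorbing set $K_\alpha$ from Lemma~\ref{abs_alpha1}, Proposition~\ref{att} delivers the unique $\cald$-pullback attractor $\cala \in \cald$.

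For the periodicity statement I would follow the final paragraph of the proof of Theorem~\ref{eatt}. If $f(\cdot,x,s)$ and $g(\cdot,x)$ are $T$-periodic, then since the conjugation $u = e^{\alpha z(\theta_t\omega)}v$ does not involve $\tau$, the solution map of \eqref{seq11}-\eqref{seq22} satisfies $u(t+\tau+T,\tau+T,\theta_{-\tau-T}\omega,\cdot) = u(t+\tau,\tau,\theta_{-\tau}\omega,\cdot)$, so $\Phi(t,\tau+T,\omega,\cdot) = \Phi(t,\tau,\omega,\cdot)$ for all $t \in \R^+$. If moreover $\psi_1(\cdot,x)$ is $T$-periodic, then the integrand in \eqref{Rdef1} depends on $\tau$ only through $\psi_1(s+\tau,\cdot)$ and $g(s+\tau,\cdot)$, whence $R(\alpha,\tau+T,\omega) = R(\alpha,\tau,\omega)$ and therefore $K_\alpha(\tau+T,\omega) = K_\alpha(\tau,\omega)$ by \eqref{lem41_1_a}. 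The periodicity half of Proposition~\ref{att} then gives $\cala(\tau+T,\omega) = \cala(\tau,\omega)$ for all $\tau \in \R$ and $\omega \in \Omega$.

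I do not expect a genuine obstacle: the analytic content is entirely carried by Lemmas~\ref{est11}, \ref{est21} and \ref{asyv1}. The one step deserving a moment's care is confirming that the exponential weight in the universe \eqref{Dom11} is compatible, up to a tempered factor, with the conjugation $u \leftrightarrow v$, so that passing from $D$ to $\widetilde{D}$ and back keeps one inside $\cald$; this compatibility is exactly what makes the transfer of both the absorbing property and the asymptotic compactness between $u$ and $v$ go through without loss.
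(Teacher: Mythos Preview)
Your proposal is correct and follows essentially the same route as the paper: invoke Lemma~\ref{abs_alpha1} for the absorbing set, Lemma~\ref{asyv1} (together with the conjugation \eqref{uv1}) for asymptotic compactness, and then Proposition~\ref{att} for existence, uniqueness, and periodicity. One small remark: your detour through the auxiliary family $\widetilde{D}$ is harmless but unnecessary here, since Lemma~\ref{asyv1} is already stated with the hypothesis $v_{0,n}e^{\alpha z(\theta_{-t_n}\omega)} \in D(\tau-t_n,\theta_{-t_n}\omega)$, which is exactly $u_{0,n} \in D(\tau-t_n,\theta_{-t_n}\omega)$; so you can apply it directly without checking $\widetilde{D}\in\cald$.
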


\begin{proof}
Since    $\Phi$ has a closed measurable 
absorbing set $K_\alpha$ given by \eqref{lem41_1_a}
and is    $\cald$-pullback
asymptotically  compact 
by \eqref{uv1}  and Lemma \ref{asyv1},
the  existence  and  uniqueness of
$\cald$-pullback attractor $\cala$  follows from
\cite{wan5, wan7} immediately. 
If $f$, $g$  and $\psi_1$ are $T$-periodic in time,
then both $\Phi$  and $K_\alpha$ are also $T$-periodic;
that is,
$\Phi (t, \tau +T, \omega,  \cdot )
= \Phi (t, \tau,  \omega,  \cdot )$
and   $ 
K_\alpha (\tau +T, \omega)
=K_\alpha (\tau, \omega )$
 for all  $t \in \R^+$, $\tau \in \R$ and
$\omega \in \Omega$.
 As a consequence,   we obtain the  periodicity of $\cala$
 from \cite{wan5}. 
 \end{proof}

\section{Upper-Semicontinuity of Random Attractors}
In this section, we prove convergence of random attractors of \eqref{seq11} as the intensity $\alpha$ of noise
approaches zero. From now on, we write the solution of \eqref{seq11}-\eqref{seq22} as $u_\alpha$ and the corresponding
cocycle as $\phi_\alpha$. Given $\tau \in \R$ and $\omega \in \Omega$, let

\be\label{us_R}
\tilde{R}(\tau,\omega) = c + c\int_{-\infty}^{0}e^{\frac{5}{4}\lambda s + 2 \abs{\int_0^{s}z(\theta_r \omega)dr} + 2\abs{z(\theta_s \omega)}}(\norm{\psi_1(s+\tau,\cdot)}_1 + \norm{g(s+\tau,\cdot}^2)ds
\ee
and

\be\label{BDef}
B(\tau, \omega) = \{u \in \ltwo : \norm{u}^2 \leq e^{2\abs{z(\theta_r \omega)}\tilde{R}(\tau,\omega)}\}
\ee
where the positive number $c$ is as in \eqref{Rdef1}. Then we have,

$$
\norm{B(\tau-t, \theta_{-t}\omega)}^2 \leq e^{-2\abs{z(\theta_{-t} \omega)}}
$$
$$
+ ce^{2\abs{z(\theta_{-t} \omega)}}\int_{-\infty}^0 e^{\frac{9}{8}\lambda s + 2\abs{\int_{0}^{s} z(\theta_{r-t} \omega)dr}+2\abs{z(\theta_{s-t} \omega)}}
$$
$$
\times(\norm{\psi_1(s+\tau-t,\cdot)}_1 + \norm{g(s+\tau-t,\cdot}^2)ds
$$
$$
\leq ce^{2\abs{z(\theta_{-t} \omega)}}( 1 + e^{\frac{9}{8}\lambda t}\int_{-\infty}^t e^{\frac{9}{8}\lambda s + 2\abs{\int_{-t}^{s} z(\theta_{r} \omega)dr}+2\abs{z(\theta_{s} \omega)}}
$$
$$
\times(\norm{\psi_1(s+\tau,\cdot)}_1 + \norm{g(s+\tau,\cdot}^2)ds).
$$
Therefore, for all $\alpha \in (0,1]$ and $t \geq 0$, we obtain

$$
e^{-\frac{5}{4}\lambda t - 2\alpha \int_{0}^{-t} z(\theta_{r} \omega)dr-2\alpha z(\theta_{-t} \omega)}\norm{B(\tau-t, \theta_{-t}\omega)}^2 \leq ce^{-\frac{5}{4}\lambda t + 2\abs{ \int_{-t}^{0} z(\theta_{r} \omega)dr}+4\abs{ z(\theta_{-t} \omega)}} 
$$
$$
+ ce^{-\frac{1}{8}\lambda t + 4\abs{ \int_{-t}^{0} z(\theta_{r} \omega)dr}+4\abs{ z(\theta_{-t} \omega)}}\int_{-\infty}^{-t}e^{\frac{9}{8}\lambda s + 2\abs{ \int_{s}^{0} z(\theta_{r} \omega)dr}+2\abs{ z(\theta_{s} \omega)}}
$$
$$
\times(\norm{\psi_1(s+\tau)}_1 + \norm{g(s+\tau)}^2)ds.
$$
Note that the last integral exists by \eqref{g11}. Thus, for every $\eta > 0$, $\tau \in \R$, and $\omega \in \Omega$, there exists a $T = T(\tau, \omega, \eta)$ such that for all $t \geq T$ and for all $\alpha \leq 1$,

\be\label{above_B}
e^{-\frac{5}{4}\lambda t - 2\alpha \int_{0}^{-t} z(\theta_{r} \omega)dr-2\alpha z(\theta_{-t} \omega)}\norm{B(\tau-t, \theta_{-t}\omega)}^2 \leq \eta.
\ee
Based on \eqref{above_B}, we are able to prove the following uniform estimates on the tails of functions in
random attractors.

\begin{lemma}\label{usc_tails}
If \eqref{f11}-\eqref{f31}  and \eqref{g11} hold, then for every $\eta > 0$, $\tau \in \R$ and $\omega \in \Omega$, there exists
$K = K(\tau,\omega,\eta) \geq 1$ such that for all $k \geq K$,
$$
\int_{\abs{x}\geq k}\abs{\xi(x)}^2dx \leq \eta \text{ for all } \xi \in \bigcup_{0 < \alpha \leq 1}\cala_{\alpha}(\tau,\omega).
$$

\end{lemma}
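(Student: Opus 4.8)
The plan is to exploit the invariance of each attractor $\cala_\alpha$ together with the fact that the random set $B$ of \eqref{BDef} contains \emph{all} of them at once, and then to re-run the cut-off argument of Lemma~\ref{est21} with $B$ in place of a general universe element, checking that every $\alpha$-dependent constant stays under control for $\alpha\in(0,1]$. First I would record the inclusions. For $0<\alpha\le 1$ and $s\le 0$ one has $-2\alpha\int_0^s z(\theta_r\omega)\,dr=2\alpha\int_s^0 z(\theta_r\omega)\,dr\le 2\bigl|\int_s^0 z(\theta_r\omega)\,dr\bigr|$ and $-2\alpha z(\theta_s\omega)\le 2|z(\theta_s\omega)|$, so comparing \eqref{Rdef1} with \eqref{us_R} gives $R(\alpha,\tau,\omega)\le\tilde R(\tau,\omega)$, and hence by \eqref{lem41_1_a} and \eqref{BDef}, $\cala_\alpha(\tau,\omega)\subseteq K_\alpha(\tau,\omega)\subseteq B(\tau,\omega)$ for every $\tau,\omega$ and every $\alpha\in(0,1]$. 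The estimate \eqref{above_B} then shows both that $B\in\cald$ and, crucially, that the pullback-smallness of $B$ holds at a rate independent of $\alpha\le 1$. By Definition~\ref{attractor_definition}(ii) applied backwards in time, $\cala_\alpha(\tau,\omega)=\phi_\alpha(t,\tau-t,\theta_{-t}\omega,\cala_\alpha(\tau-t,\theta_{-t}\omega))\subseteq\phi_\alpha(t,\tau-t,\theta_{-t}\omega,B(\tau-t,\theta_{-t}\omega))$ for all $t\ge 0$, so each $\xi\in\cala_\alpha(\tau,\omega)$ has the form $\xi=u_\alpha(\tau,\tau-t,\theta_{-\tau}\omega,u_0)$ with $u_0\in B(\tau-t,\theta_{-t}\omega)$, i.e.\ with $e^{\alpha z(\theta_{-t}\omega)}v_{\tau-t}=u_0\in B(\tau-t,\theta_{-t}\omega)$ in the notation of \eqref{uv1}.

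Next I would repeat the computation in the proof of Lemma~\ref{est21} with $\sigma=\tau$ and the universe element taken to be $B$, obtaining the analogue of \eqref{pv_Est}: the tail $\int_{|x|\ge k}|\xi|^2\,dx$ is bounded by the sum of (i) an initial-data term $e^{\frac54\lambda(-t)-2\alpha\int_{0}^{-t}z(\theta_r\omega)\,dr}\,\ii\rho(\tfrac{|x|^2}{k^2})|v_{\tau-t}|^2\,dx$, which equals $e^{-\frac54\lambda t-2\alpha\int_{0}^{-t}z(\theta_r\omega)\,dr-2\alpha z(\theta_{-t}\omega)}\|B(\tau-t,\theta_{-t}\omega)\|^2$ up to a bounded factor and is therefore $\le\eta$ for all $t\ge T(\tau,\omega,\eta)$ uniformly in $\alpha\le 1$ by \eqref{above_B}; (ii) a term $\tfrac{2c_1}{k}M$ with $M$ controlled by the analogue of Lemma~\ref{est11} for $B$, again uniformly in $\alpha\le1$ after dominating the weights as in the previous paragraph; and (iii) two forcing integrals $\int_{-\infty}^{0}\int_{|x|\ge k}e^{\frac54\lambda s-2\alpha\int_{0}^{s}z(\theta_r\omega)\,dr}e^{-2\alpha z(\theta_s\omega)}\bigl(\psi_1(s+\tau,x)+g^2(s+\tau,x)\bigr)\,dx\,ds$, which I bound by the $\alpha$-free integrals $\int_{-\infty}^{0}\int_{|x|\ge k}e^{\frac54\lambda s+2|\int_0^s z(\theta_r\omega)\,dr|+2|z(\theta_s\omega)|}(\psi_1+g^2)\,dx\,ds$. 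The latter integrals are finite by \eqref{g11} and \eqref{zergodic}, and their full-space versions converge, so dominated convergence furnishes a $K=K(\tau,\omega,\eta)\ge1$, independent of $t$ and of $\alpha\le1$, for which (ii) and (iii) are each $\le\eta$. Combining the three contributions and using $\rho(\tfrac{|x|^2}{k^2})=1$ for $|x|^2\ge 2k^2$, we get $\int_{|x|\ge\sqrt2\,k}|\xi|^2\,dx\le 3\eta$ for every $\alpha\in(0,1]$ and every $\xi\in\cala_\alpha(\tau,\omega)$, which is the assertion after relabelling $K$.

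The main obstacle is precisely this uniformity in $\alpha$: one must verify that none of the exponential weights appearing in Lemmas~\ref{est11} and~\ref{est21} degenerates as $\alpha\to0^+$ nor blows up for $\alpha$ near $1$, i.e.\ that on $s\le0$ the replacements $-2\alpha\int_0^s z\rightsquigarrow 2|\int_0^s z|$ and $-2\alpha z\rightsquigarrow 2|z|$ preserve integrability against $e^{\frac54\lambda s}$, which is exactly what \eqref{us_R}, \eqref{BDef} and \eqref{above_B} were arranged to guarantee, and that the pullback-absorption time can be read off from \eqref{above_B} with no reference to $\alpha$. Once this bookkeeping is in place, the argument is the same cut-off estimate as in Lemma~\ref{est21}, now applied along the invariance identity for $\cala_\alpha$.
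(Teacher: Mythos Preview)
Your proposal is correct and follows essentially the same approach as the paper: use the uniform inclusion $\cala_\alpha\subseteq K_\alpha\subseteq B$ together with the invariance of $\cala_\alpha$ to write each $\xi\in\cala_\alpha(\tau,\omega)$ as a pulled-back solution with data in $B(\tau-t,\theta_{-t}\omega)$, then re-run the cut-off estimate of Lemma~\ref{est21} (fed by Lemma~\ref{est11}) with the $\alpha$-dependent weights dominated by their $\alpha$-free versions, the uniform absorption time supplied by \eqref{above_B}. The paper states this tersely as ``following the proof of Lemmas~\ref{est11} and~\ref{est21}, by \eqref{above_B} one can verify\ldots'', whereas you have spelled out the three contributions and the required $\alpha$-uniform bounds explicitly.
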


\begin{proof}
Following the proof of Lemmas \ref{est11} and \ref{est21}, by \eqref{above_B} one can verify that for every $\eta > 0$, $\tau \in \R$, and $\omega \in \Omega$, there exists a $T = T(\tau, \omega, \eta)$ and $K = K(\tau,\omega,\eta) \geq 1$, such that for all $t \geq T$ and $k \geq K$ and for all $\alpha \in (0,1]$, the solution $u_\alpha$ of \eqref{seq11}-\eqref{seq22} satisfies,

\be\label{above_int}
\int_{\abs{x}\geq k}\abs{u_\alpha(\tau,\tau-t,\theta_{-\tau}\omega,u_{\alpha,\tau-t})}^2dx \leq \eta
\ee
where $u_{\alpha,\tau-t} \in B(\tau - t,\theta_{-\tau}\omega)$ with $B$ given by \eqref{BDef}. By  \eqref{Rdef1}, \eqref{lem41_1_a} and \eqref{us_R}-\eqref{BDef} we have

\be\label{Union_Attractors}
\bigcup_{0< \alpha \leq 1}\cala_\alpha(\tau,\omega) \subseteq \bigcup_{0< \alpha \leq 1}K_\alpha(\tau,\omega) \subseteq B(\tau,\omega).
\ee
Let $\xi \in\cala_\alpha(\tau,\omega)$ for some $\alpha \in (0,1]$. By the invariance of $\cala_\alpha$, there exists $\zeta \in \cala_\alpha(\tau - T,\theta_{-T} \omega)$
such that $\xi = u_\alpha(\tau,\tau-T,\theta_{-\tau}\omega,\zeta)$, which along with \eqref{above_int}-\eqref{Union_Attractors} implies that for all $k \geq K$,

\begin{equation*}
\int_{\abs{x}\geq k}\abs{\xi(x)}^2dx = \int_{\abs{x}\geq k}\abs{u_\alpha(\tau,\tau-T,\theta_{-\tau}\omega,\zeta)}^2dx \leq \eta,
\end{equation*}
as desired.
\end{proof}

In the limiting case $\alpha = 0$, the stochastic equation \eqref{seq11} reduces to a deterministic one:

\be\label{deterministic}
  {\frac {\partial u}{\partial t}}  + \lambda u
  - {\rm div} \left (|\nabla u |^{p-2} \nabla u \right )
  =f(t,x,u ) 
  +g(t,x) 
\quad 
\text{with}
\quad 
u(\tau,x)=u_\tau(x)
\ee
for $x \in \R^n$. Denote the cocycle of \eqref{deterministic} in $\ltwo$ by $\phi_0$. As in \eqref{Dom21}, let $\cald_0$ be the collection of
subsets of $\ltwo$ given by,

 \begin{equation*}
 \cald  = \{ 
   D =\{ D(\tau) \subseteq \ltwo:
    \tau \in \R\}: \lim_{s \to  - \infty} e^{ \frac{5}{4}\lambda s}
  \| D( \tau + s) \|^2 =0, \forall \tau \in \R\}.
 \end{equation*}
Note that Theorem \ref{eatta1} is also valid when $\alpha = 0$; more precisely, $\phi_0$ has a unique $\cald_0$-pullback
attractor $\cala_0 = \{\cala_0(\tau):\tau \in \R \}\in \cald_0$ and has a $\cald_0$-pullback absorbing set $K_0 = \{K_0(\tau):\tau \in \R\}$
where $K_0(\tau)$ is defined by

\be\label{d_K}
K_0(\tau) = \{u \in \ltwo : \norm{u}^2 \leq R_0(\tau) \}
\ee
with

\be\label{d_R}
R_0(\tau) = c + c\int_{-\infty}^0 e^{\frac{5}{4}\lambda s}(\norm{\psi_1(s+\tau)}_1 + \norm{g(s+\tau)}^2)ds.
\ee
The constant $c$ in \eqref{d_R} is the same as in \eqref{Rdef1}. It follows from \eqref{Rdef1}, \eqref{lem41_1_a} and \eqref{d_K}-\eqref{d_R} that

\be\label{d_lim_K}
\limsup_{\alpha \to 0}\norm{K_\alpha(\tau,\omega)} = \norm{K_0(\tau)}.
\ee
In the sequel, we further assume there exists $\psi_5 \in L^{\infty}(\R,L^{\infty}(\R^n))$ such that for all $t,s \in \R$ and
$x\in \R^n$,

\be
\label{f71}
\abs{\frac {\partial f}{\partial s} (t, x, s)}   \le \psi_5 (t,x)(1+\abs{s}^{q-2}).
\ee
Under condition \eqref{f71}, we have the following relations between solutions of \eqref{seq11} and \eqref{deterministic}.

\begin{lemma}\label{usc_conv}
Suppose \eqref{f11}-\eqref{f31}, \eqref{f71} and \eqref{g11} hold. If $u_\alpha$ and $u$ are the solutions of \eqref{seq11}-\eqref{seq22} and \eqref{deterministic} with initial data $u_{\alpha,\tau}$ and $u_\tau$, respectively, then for every $\tau \in \R$, $\omega \in \Omega$, $T > 0$, and $\epsilon \in (0,1)$, there exists a $\alpha_0=\alpha_0(\tau,\omega,T,\epsilon) \in (0,1)$ such that for all $\alpha \leq \alpha_0$ and $t \in [\tau, \tau+T]$,
$$
\norm{u_\alpha(t,\tau,\omega,v_{\alpha,\tau}) - u(t,\tau,u_{\tau}}^2 \leq c\norm{v_{\alpha,\tau}-u_\tau}^2 + c\epsilon (1 + \norm{v_{\alpha,\tau}}^2 + \norm{u_{\tau}}^2),
$$
where $c$ and $c_0$ are positive constants independent of $\alpha$ and $\epsilon$.
\end{lemma}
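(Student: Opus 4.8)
The plan is to compare the solution $v_\alpha$ of the transformed stochastic problem \eqref{veq11}--\eqref{veq22} with the solution $u$ of the deterministic problem \eqref{deterministic} on $[\tau,\tau+T]$, and only at the very end translate the estimate for $v_\alpha$ into one for $u_\alpha = e^{\alpha\zto}v_\alpha$. First I would record an $\alpha$-uniform energy estimate on the finite interval: since $\zto$ is continuous in $t$, on $[\tau,\tau+T]$ it is bounded by some $Z=Z(\tau,\omega,T)$, so multiplying \eqref{veq11} by $v_\alpha$, proceeding as in the proof of Lemma~\ref{est11}, and applying Gronwall on $[\tau,\tau+T]$ gives, uniformly for $\alpha\in(0,1]$,
$$
\sup_{t\in[\tau,\tau+T]}\norm{v_\alpha(t)}^2 + \int_\tau^{\tau+T}\!\big(\norm{\nabla v_\alpha(s)}_p^p+\norm{v_\alpha(s)}_q^q\big)\,ds \le c\,(1+\norm{v_{\alpha,\tau}}^2),
$$
together with the analogous estimate for $u$ in terms of $\norm{u_\tau}^2$; throughout, $c$ depends on $\tau,\omega,T$ and the forcing data but not on $\alpha$ or $\epsilon$.

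Next, set $w = v_\alpha-u$, subtract the two equations, and test with $w$. After integration by parts the difference of the $p$-Laplace terms equals $e^{\alpha(p-2)\zto}(A(v_\alpha)-A(u),w) + (e^{\alpha(p-2)\zto}-1)\ii\abs{\nabla u}^{p-2}\nabla u\cdot\nabla w\,dx$; the first summand is $\ge 0$ by monotonicity and may be discarded, while the second is an error term carrying the factor $e^{\alpha(p-2)\zto}-1=O(\alpha)$ on $[\tau,\tau+T]$, which I would bound by $\abs{e^{\alpha(p-2)\zto}-1}\,\norm{\nabla u}_p^{p-1}\norm{\nabla w}_p$ and integrate in time using H\"older and the energy estimate. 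For the difference of the nonlinearities I would split
$$
e^{-\alpha\zto}f(t,x,e^{\alpha\zto}v_\alpha)-f(t,x,u)=\big[e^{-\alpha\zto}f(t,x,e^{\alpha\zto}v_\alpha)-f(t,x,v_\alpha)\big]+\big[f(t,x,v_\alpha)-f(t,x,u)\big].
$$
The second bracket tested against $w$ is $\le\ii\psi_4(t,x)\abs{w}^2dx\le c\norm{w}^2$, using only the one-sided bound \eqref{f31} (since $w^2\ge0$, an upper bound on $\partial_s f$ suffices). The first bracket is where \eqref{f71} is used: writing $f(t,x,e^{\alpha\zto}v_\alpha)-f(t,x,v_\alpha)$ as an integral of $\partial_s f$ along the segment from $v_\alpha$ to $e^{\alpha\zto}v_\alpha$ and invoking \eqref{f71}, one gets the pointwise bound $c\alpha\,\psi_5(t,x)(\abs{v_\alpha}+\abs{v_\alpha}^{q-1})$, while the companion term $(e^{-\alpha\zto}-1)f(t,x,v_\alpha)$ is bounded by $c\alpha(\psi_2\abs{v_\alpha}^{q-1}+\psi_3)$ via \eqref{f21}; pairing these with $w$, using Young's inequality in $L^2$ and $L^q$ and $\norm{w}_q\le\norm{v_\alpha}_q+\norm{u}_q$, bounds them by $c\alpha$ times a sum of $\norm{w}^2,\norm{v_\alpha}^2,\norm{v_\alpha}_q^q,\norm{u}_q^q$ and $\norm{\psi_3}_{q_1}^{q_1}$. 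The remaining pieces $\alpha\zto(v_\alpha,w)$ and $(e^{-\alpha\zto}-1)(g,w)$ are handled the same way.

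Collecting terms I obtain a differential inequality $\frac{d}{dt}\norm{w}^2\le c_0\norm{w}^2+\alpha\,G(t)$ with $c_0=c_0(\tau,\omega,T)$ and $\int_\tau^{\tau+T}G(s)\,ds\le c\,(1+\norm{v_{\alpha,\tau}}^2+\norm{u_\tau}^2)$ by the energy estimate (note $G$ need only be in $L^1$ in time, not bounded). Gronwall on $[\tau,\tau+T]$ then gives $\norm{w(t)}^2\le e^{c_0T}\norm{w(\tau)}^2+c\alpha(1+\norm{v_{\alpha,\tau}}^2+\norm{u_\tau}^2)$ with $w(\tau)=v_{\alpha,\tau}-u_\tau$. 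Finally, since $\abs{e^{\alpha\zto}-1}^2\le c\alpha^2$ on $[\tau,\tau+T]$ and $\norm{v_\alpha(t)}^2\le c(1+\norm{v_{\alpha,\tau}}^2)$, the identity $u_\alpha-u=(e^{\alpha\zto}-1)v_\alpha+w$ yields $\norm{u_\alpha(t)-u(t)}^2\le c\norm{v_{\alpha,\tau}-u_\tau}^2+c\alpha(1+\norm{v_{\alpha,\tau}}^2+\norm{u_\tau}^2)$, and taking $\alpha_0=\alpha_0(\tau,\omega,T,\epsilon)\in(0,1)$ small enough that $c\alpha_0\le c\epsilon$ (and small enough for all the $O(\alpha)$ bounds on $e^{\pm\alpha\zto}-1$ to hold on $[\tau,\tau+T]$) gives the claim. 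I expect the \emph{main obstacle} to be the nonlinear term $e^{-\alpha\zto}f(t,x,e^{\alpha\zto}v_\alpha)-f(t,x,v_\alpha)$: getting a genuinely quantitative $O(\alpha)$ bound there requires the super-linear growth control \eqref{f71} on $\partial_s f$ (the earlier hypotheses give only a one-sided bound), combined with the $\alpha$-uniform control of $\norm{v_\alpha}_q^q$ and $\norm{\nabla v_\alpha}_p^p$ in $L^1$ in time coming from the energy estimate.
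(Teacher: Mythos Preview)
Your proposal is correct and follows essentially the same strategy as the paper: compare $v_\alpha$ with $u$ by subtracting the equations and testing with the difference, split the $p$-Laplace and nonlinear terms into a monotone part plus an $O(\alpha)$ perturbation, close via Gronwall together with an $\alpha$-uniform energy estimate on $[\tau,\tau+T]$, and only at the end pass from $v_\alpha$ to $u_\alpha=e^{\alpha\zto}v_\alpha$.

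The only differences are cosmetic choices of splitting. For the $p$-Laplacian cross term $(e^{\alpha(p-2)\zto}-1)\ii|\nabla u|^{p-2}\nabla u\cdot\nabla w\,dx$, the paper invokes the \emph{strong} monotonicity inequality $(A(v_\alpha)-A(u),v_\alpha-u)\ge\beta\norm{\nabla w}_p^p$ and then absorbs a $\tfrac12\beta\norm{\nabla w}_p^p$ coming from Young's inequality, whereas you simply discard the monotone part as nonnegative and control $\norm{\nabla w}_p$ by $\norm{\nabla v_\alpha}_p+\norm{\nabla u}_p$ and the energy estimate; your route avoids the strong-monotonicity constant at the price of a slightly less clean right-hand side. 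For the nonlinearity, the paper uses the three-term decomposition
\[
e^{-\alpha\zto}\big[f(\cdot,e^{\alpha\zto}v_\alpha)-f(\cdot,e^{\alpha\zto}u)\big]+(e^{-\alpha\zto}-1)f(\cdot,e^{\alpha\zto}u)+\big[f(\cdot,e^{\alpha\zto}u)-f(\cdot,u)\big],
\]
while you use the two-term split $\big[e^{-\alpha\zto}f(\cdot,e^{\alpha\zto}v_\alpha)-f(\cdot,v_\alpha)\big]+\big[f(\cdot,v_\alpha)-f(\cdot,u)\big]$; both isolate a part handled by the one-sided bound \eqref{f31} and push the $\alpha$-dependence into terms controlled by \eqref{f21} and \eqref{f71}. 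Either splitting leads to the same final estimate.
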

\begin{proof}
Let $\xi = v_\alpha - u$. Then by subtracting \eqref{veq11} from \eqref{deterministic} and taking the inner product with $\xi$, we get
\be\label{usc_1}
\frac{1}{2}\frac{d}{dt} \norm{\xi}^2 + \int_{\R^n}(e^{\alpha(p-2)z(\theta_t\omega)}\abs{\nabla v_\alpha}^{p-2}\nabla v_\alpha-\abs{\nabla u}^{p-2}\nabla u)\cdot \nabla \xi dx + \lambda \norm{\xi}^2 
\ee
$$
= \alpha z(\theta_t\omega)\norm{\xi}^2 + \alpha z(\theta_t\omega)(u, \xi) +  \int_{\R^n}(e^{-\alpha z(\theta_t\omega)}f(t,x,e^{\alpha z(\theta_t\omega)}v_\alpha)-f(t,x,u))\xi dx 
$$
$$
+ (e^{-\alpha z(\theta_t\omega)}-1)\int_{\R^n}g(t,x)\xi dx.
$$
For the second term on the left hand side of \eqref{usc_1} we have that,
$$
 \int_{\R^n}(e^{\alpha(p-2)z(\theta_t\omega)}\abs{\nabla v_\alpha}^{p-2}\nabla v_\alpha-\abs{\nabla u}^{p-2}\nabla u)\cdot \nabla \xi dx
$$
\be\label{usc_2}
= \int_{\R^n}e^{\alpha(p-2)z(\theta_t\omega)}(\abs{\nabla v_\alpha}^{p-2}\nabla v_\alpha-\abs{\nabla u}^{p-2}\nabla u)\cdot \nabla \xi dx
\ee
$$
+  \int_{\R^n}(e^{\alpha(p-2)z(\theta_t\omega)}-1)\abs{\nabla u}^{p-2}\nabla u)\cdot \nabla \xi dx
$$
By the monotonicity of the p-laplace operator, see e.g. \cite{dibe1}, we have that there is a positive number $\beta$ such that
\be\label{usc_3}
(\abs{\nabla v_\alpha}^{p-2}\nabla v_\alpha-\abs{\nabla u}^{p-2}\nabla u)\cdot ( \nabla v_\alpha - \nabla u) \geq \beta \abs{\nabla v_\alpha - \nabla u}.
\ee
By Young's inequality, we find that for every $\tau \in \R$, $\omega \in \Omega$, $T > 0$, and $\epsilon \in [0,1]$, there exists a $\alpha_1=\alpha_1(\tau,\omega,T,\epsilon) > 0$ such that for all $\alpha \in [0, \alpha_1]$,
\be\label{usc_4}
\abs{\int_{\R^n}(e^{\alpha(p-2)z(\theta_t\omega)}-1)\abs{\nabla u}^{p-2}\nabla u)\cdot \nabla \xi dx }
\ee
$$
\leq \frac{1}{2}\beta e^{\alpha(p-2)z(\theta_t\omega)}\int_{\R^n}\abs{\nabla \xi}^p dx + \epsilon \int_{\R^n}\abs{\nabla u}^p dx.
$$
It follows from \eqref{usc_2}-\eqref{usc_4} that 
\be\label{usc_5}
\int_{\R^n}(e^{\alpha(p-2)z(\theta_t\omega)}\abs{\nabla v_\alpha}^{p-2}\nabla v_\alpha-\abs{\nabla u}^{p-2}\nabla u)\cdot \nabla \xi dx 
\ee
$$
\geq \frac{1}{2}\beta e^{\alpha(p-2)z(\theta_t\omega)}\int_{\R^n}\abs{\nabla \xi}^p dx - \epsilon \int_{\R^n}\abs{\nabla u}^p dx.
$$
For the third term on the right hand side of \eqref{usc_1} by \eqref{f21}, \eqref{f31}, and \eqref{f71}  and Young's inequality, we find that
$$
\int_{\R^n}(e^{-\alpha z(\theta_t\omega}f(t,x,e^{\alpha z(\theta_t\omega)}v_\alpha)-f(t,x,u))\xi dx 
$$
$$
= \int_{\R^n}e^{-\alpha z(\theta_t\omega)}(f(t,x,e^{\alpha z(\theta_t\omega)}v_\alpha)-f(t,x,e^{\alpha z(\theta_t\omega)}u))\xi dx
$$
$$
+ \int_{\R^n}(e^{-\alpha z(\theta_t\omega)}f(t,x,e^{\alpha z(\theta_t\omega)}u)-f(t,x,e^{\alpha z(\theta_t\omega)}u))\xi dx 
$$
$$
+ \int_{\R^n}(f(t,x,e^{\alpha z(\theta_t\omega)}u)-f(t,x,u))\xi dx = \int_{\R^n}\frac{\partial f}{ \partial s}(t,x,s)\xi^2 dx 
$$
$$
+ (e^{-\alpha z(\theta_t\omega)} - 1)\int_{\R^n}(f(t,x,e^{\alpha z(\theta_t\omega)}u))\xi dx+ (e^{\alpha z(\theta_t\omega)} - 1)\int_{\R^n}\frac{\partial f}{ \partial s}(t,x,s)u\xi dx 
$$
$$
\leq \int_{\R^n}\psi_5(t,x)(\xi^2 +\abs{\xi}^q)dx + \abs{e^{-\alpha z(\theta_t\omega)} - 1}\int_{\R^n}
$$
$$
\times (\psi_2(t,x)e^{\alpha (q-1) z(\theta_t\omega)}\abs{u}^{q-1} + \psi_3(t,x))\abs{\xi} dx 
$$
$$
+ \abs{e^{\alpha z(\theta_t\omega)} - 1}\int_{\R^n}\psi_4(t,x)\abs{\xi}\abs{u} dx \leq c_1\norm{\xi}^2
$$
$$
+c_1 \abs{e^{-\alpha z(\theta_t\omega)} - 1}\int_{\R^n}(e^{\alpha (q-1) z(\theta_t\omega)}(\abs{u}^{q}+\abs{v_\alpha}^q + \psi_3(t,x)^{q_1}) dx 
$$
\be\label{usc_6}
+ c_1\abs{e^{\alpha z(\theta_t\omega)} - 1}\norm{u}^2 dx.
\ee
So for every $\tau \in \R$, $\omega \in \Omega$, $T > 0$, and $\epsilon \in [0,1]$, there exists a $\alpha_2=\alpha_2(\tau,\omega,T,\epsilon) > 0$ such that for all $\alpha \in [0, \alpha_2]$ by \eqref{usc_6} we have,
$$
\int_{\R^n}(e^{-\alpha z(\theta_t\omega}f(t,x,e^{\alpha z(\theta_t\omega)}v_\alpha)-f(t,x,u))\xi dx 
$$\be\leq c_2\norm{\xi}^2 + \epsilon c_2(1 + \norm{u}^2 + \norm{v_\alpha}^q_q+\norm{u}^q_q).
\ee
Similarly, for every $\tau \in \R$, $\omega \in \Omega$, $T > 0$, and $\epsilon \in [0,1]$, there exists a $\alpha_3=\alpha_3(\tau,\omega,T,\epsilon) > 0$ such that for all $\alpha \in [0, \alpha_3]$ we have,
\be\label{usc_7}
(e^{-\alpha z(\theta_t\omega)}-1)\int_{\R^n}g(t,x)\xi dx \leq c_3\norm{\xi}^2 + \epsilon c_3 \norm{g(t,\cdot)}^2
\ee
So by \eqref{usc_5}-\eqref{usc_7}, and Young's inequality we have from \eqref{usc_1} that
$$
\frac{d}{dt} \norm{\xi}^2 \leq c_4\norm{\xi}^2 + \epsilon c_4(1 + \norm{u}^2 + \norm{v_\alpha}^q_q+\norm{u}^q_q + \norm{\nabla u}^p_p + \norm{g(t,\cdot)}^2).
$$
Solving this inequality we have,
\begin{multline}\label{usc_8}
\norm{\xi}^2 \leq e^{c_4(t-\tau)}\norm{\xi}^2 \\
+ \epsilon c_4e^{c_4(t-\tau)}\int_{\tau}^{t}(1 + \norm{u(s,\tau,\omega,u_{\tau})}^2 + \norm{v_\alpha}^q_q+\norm{u}^q_q + \norm{\nabla u}^p_p + \norm{g(t,\cdot)}^2)ds.
\end{multline}
By \eqref{v_est_011} we have that for all $\alpha \in [0,1]$,
\be\label{usc_9}
\frac{d}{dt} \norm{v_\alpha}^2 + 2 e^{\alpha (p-2) \zto} \norm{\nabla v_\alpha}_p^p + c_5\norm{v_\alpha}^2 + 2\gamma e^{\alpha (q-2)\zto }\norm{v_\alpha}_q^q
$$
$$
\leq c_6\norm{v_\alpha}^2 + 2 e^{-2\alpha \zto} \norm{\psi_1(t,\cdot)}_1 + \frac{4}{\lambda} e^{-2\alpha \zto} \norm{g(t,\cdot)}^2.
\ee
By \eqref{usc_9} we have that for all $\alpha \in [0,1]$ and $t \in [\tau,\tau+T]$,
\be\label{usc_10}
\norm{v_\alpha}^2 + \int_{\tau}^{t}\norm{\nabla v_\alpha}_p^p + \norm{v_\alpha}^2 + \norm{v_\alpha}_q^qds
$$
$$
\leq e^{c_6(t-\tau)}\norm{v_{\alpha,\tau}}^2 + c_7e^{c_6(t-\tau)}\int_{\tau}^{t}(\norm{\psi_1(s,\cdot)}_1 + \norm{g(s,\cdot)}^2)ds.
\ee
It is clear that \eqref{usc_10} also holds for $\alpha=0$, so we have by \eqref{usc_8} and \eqref{usc_10},
\be\label{usc_11}
\norm{v_\alpha(t,\tau,\omega,v_{\alpha,\tau}) - u(t,\tau,u_{\tau}}^2 \leq e^{c_4(t-\tau)}\norm{v_{\alpha,\tau}-u_\tau}^2 
$$
$$
+ \epsilon c_8e^{c_9(t-\tau)}(1 + \norm{v_{\alpha,\tau}}^2 + \norm{u_{\tau}}^2 + \int_{\tau}^{t}(\norm{\psi_1(s,\cdot)}_1 + \norm{g(s,\cdot)}^2)ds.
\ee
Since
\begin{equation*}
\norm{u_\alpha(t,\tau,\omega,v_{\alpha,\tau}) - v_\alpha(t,\tau,\omega,v_{\alpha,\tau})}^2 = \abs{e^{\alpha z(\theta_t\omega)}-1}\norm{u_\alpha(t,\tau,\omega,v_{\alpha,\tau})},
\end{equation*}
we have the desired result from \eqref{usc_10} and \eqref{usc_11}.

\end{proof}

\begin{lemma}\label{final_usc}
Let \eqref{f11}-\eqref{f31}, \eqref{f71} and \eqref{g11} hold. Given $\tau \in \R$ and $\omega \in \Omega$, if $\alpha_n \to 0$ and $u_n \in \cala_{\alpha_n}(\tau,\omega)$, then $\{u_n \}$ has a convergent subsequence in $\ltwo$.
\end{lemma}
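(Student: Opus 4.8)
The plan is to run the argument of Lemma~\ref{asyv1}, but with the single cocycle replaced by the family $\{\phi_{\alpha_n}\}$, keeping every a priori bound uniform in $n$ and postponing the passage $\alpha_n\to0$ to the last step, where Lemma~\ref{usc_conv} is used. First, by \eqref{Union_Attractors} we have $u_n\in\cala_{\alpha_n}(\tau,\omega)\subseteq B(\tau,\omega)$, so $\{u_n\}$ is bounded in $\ltwo$. Using the invariance of $\cala_{\alpha_n}$ I would choose, for each $n$, an element $\zeta_n\in\cala_{\alpha_n}(\tau-1,\theta_{-1}\omega)$ with $u_n=\phi_{\alpha_n}(1,\tau-1,\theta_{-1}\omega,\zeta_n)=u_{\alpha_n}(\tau,\tau-1,\theta_{-\tau}\omega,\zeta_n)$. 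Along $[\tau-1,\tau]$ the orbit $s\mapsto u_{\alpha_n}(s,\tau-1,\theta_{-\tau}\omega,\zeta_n)$ lies in $\cala_{\alpha_n}(s,\theta_{s-\tau}\omega)\subseteq B(s,\theta_{s-\tau}\omega)$, each $B(s,\theta_{s-\tau}\omega)$ being a fixed bounded ball, so these orbits are bounded in $L^\infty(\tau-1,\tau;\ltwo)$ uniformly in $n$. Plugging this into the energy identity \eqref{v_est_011} and integrating over the unit interval (the factors $e^{\pm\alpha_n z(\theta_s\omega)}$, $e^{\alpha_n(p-2)z(\theta_s\omega)}$, $e^{\alpha_n(q-2)z(\theta_s\omega)}$ being uniformly bounded for $\alpha_n\in(0,1]$ since $s\mapsto z(\theta_s\omega)$ is continuous, and $\psi_1,g$ being locally integrable in $t$), I would get that $\{u_{\alpha_n}(\cdot,\tau-1,\theta_{-\tau}\omega,\zeta_n)\}$ is bounded, uniformly in $n$, in $L^\infty(\tau-1,\tau;\ltwo)\cap L^p(\tau-1,\tau;W^{1,p}(\R^n))\cap L^q(\tau-1,\tau;\lq)$, with time derivative bounded in the corresponding dual sum space.

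Next, exactly as in the proof of Lemma~\ref{comv11} --- the compact embedding $W^{1,p}(\o_k)\hookrightarrow L^2(\o_k)$, an Aubin--Lions type compactness argument, and a diagonal extraction in $k$ --- the uniform bounds of the previous paragraph yield, after passing to a subsequence, some $s_0\in(\tau-1,\tau)$ and some $w\in\ltwo$ (the $\ltwo$ membership coming from the uniform $L^\infty(\ltwo)$ bound) with $u_{\alpha_n}(s_0,\tau-1,\theta_{-\tau}\omega,\zeta_n)\to w$ in $L^2(\o_k)$ for every $k\in\N$. To upgrade this to convergence in $\ltwo$ I would use the uniform tail estimate: since $u_{\alpha_n}(s_0,\tau-1,\theta_{-\tau}\omega,\zeta_n)\in\cala_{\alpha_n}(s_0,\theta_{s_0-\tau}\omega)$ and $\alpha_n\in(0,1]$, Lemma~\ref{usc_tails} applied at the pair $(s_0,\theta_{s_0-\tau}\omega)$ gives, for every $\nu>0$, a $K$ independent of $n$ with $\int_{\abs{x}\geq K}\abs{u_{\alpha_n}(s_0,\tau-1,\theta_{-\tau}\omega,\zeta_n)}^2dx\leq\nu$; combined with the $L^2(\o_k)$ convergence and $w\in\ltwo$, this forces $u_{\alpha_n}(s_0,\tau-1,\theta_{-\tau}\omega,\zeta_n)\to w$ strongly in $\ltwo$.

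Finally, writing $u_n=u_{\alpha_n}(\tau,s_0,\theta_{-\tau}\omega,\,u_{\alpha_n}(s_0,\tau-1,\theta_{-\tau}\omega,\zeta_n))$ and applying Lemma~\ref{usc_conv} on $[s_0,\tau]$ (of length $\leq1$) with the stochastic solution started from $u_{\alpha_n}(s_0,\ldots)$ and the deterministic solution $\phi_0(\tau-s_0,s_0,w)$ started from $w$, I would obtain, for every $\epsilon\in(0,1)$ and all sufficiently large $n$,
\be
\norm{u_n-\phi_0(\tau-s_0,s_0,w)}^2\leq c\,\norm{u_{\alpha_n}(s_0,\ldots)-w}^2+c\,\epsilon\bigl(1+\norm{u_{\alpha_n}(s_0,\ldots)}^2+\norm{w}^2\bigr).
\ee
Since $\norm{u_{\alpha_n}(s_0,\ldots)}$ is bounded in $n$, $\norm{w}$ is fixed, and $\norm{u_{\alpha_n}(s_0,\ldots)-w}\to0$, this gives $\limsup_n\norm{u_n-\phi_0(\tau-s_0,s_0,w)}^2\leq c'\epsilon$ with $c'$ independent of $\epsilon$; letting $\epsilon\to0$ shows $u_n\to\phi_0(\tau-s_0,s_0,w)$ in $\ltwo$ along the extracted subsequence, which is the assertion.

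The hard part is not the skeleton of the argument (which is the one already used for Lemma~\ref{asyv1}) but making every ingredient uniform over the sequence $\alpha_n\in(0,1]$: the a priori estimates behind Lemmas~\ref{est11} and \ref{est21}, and the compactness of Lemma~\ref{comv11}, must hold with constants independent of $\alpha_n$. This is exactly what the uniform control \eqref{above_B} of the absorbing family $B$ provides, together with the elementary fact that $e^{\alpha_n z(\theta_s\omega)}$ stays bounded on the fixed interval $[\tau-1,\tau]$ for $\alpha_n\in(0,1]$; once this uniformity is in hand, the identification of the limit profile as a solution of the limiting deterministic equation \eqref{deterministic} is delivered by Lemma~\ref{usc_conv}.
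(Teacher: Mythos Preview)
Your proposal is correct and follows essentially the same route as the paper: pull back by invariance to time $\tau-1$, use the (uniform in $\alpha_n$) a priori bounds to run the Aubin--Lions/diagonal argument of Lemma~\ref{comv11} and extract an intermediate time $s_0\in(\tau-1,\tau)$ with local $L^2(\o_k)$ convergence, upgrade to full $\ltwo$ convergence via the uniform tail estimate of Lemma~\ref{usc_tails}, and finish by propagating from $s_0$ to $\tau$ with Lemma~\ref{usc_conv}. The only cosmetic difference is that the paper first passes to the transformed variable $v_{\alpha_n}=e^{-\alpha_n z(\theta_t\omega)}u_{\alpha_n}$ before invoking the compactness step, whereas you work directly with $u_{\alpha_n}$; since the conversion factors $e^{\pm\alpha_n z(\theta_s\omega)}$ are uniformly bounded on $[\tau-1,\tau]$ for $\alpha_n\in(0,1]$, this makes no difference.
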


\begin{proof}
It follows from the invariance of $\cala_{\alpha_n}$ and the fact $u_n \in \cala_{\alpha_n}(\tau,\omega)$ that for each $n \in \N$, there
is $\xi_n \in \cala_{\alpha_n}(\tau-1,\theta_{-1}\omega)$ such that

\be\label{def_usc}
u_n = u_{\alpha_n}(\tau,\tau-1,\theta_{-\tau}\omega,\xi_n).
\ee
By \eqref{uv1}, for the solution $v_{\alpha_n}$ of \eqref{veq11}-\eqref{veq22} we get, for $s \geq \tau - 1$,

\begin{multline}\label{tran_usc}
v_{\alpha_n}(s,\tau-1,\theta_{-\tau}\omega,\zeta_n) \\
= e^{-\alpha_n z(\theta_{s-\tau}\omega)}u_{\alpha_n}(s,\tau-1,\theta_{-\tau}\omega,\xi_n)
\quad \text{where}\quad \zeta_n = e^{-\alpha_n z(\theta_{s-\tau}\omega)}\xi_n.
\end{multline}
By \eqref{Union_Attractors} and \eqref{tran_usc} we find that $\{\zeta_n\}$ is a bounded sequence in $\ltwo$, and hence, as in Lemma
\ref{comv11}, there exist $r \in (\tau-1,\tau)$ and $\phi \in \ltwo$ such that, up to a subsequence,

\begin{equation*}
v_{\alpha_n}(r,\tau - 1, \theta_{-\tau}\omega,\zeta_n) \to \phi \quad \text{in} \quad L^2(\o_k) \quad \text{for all} \quad k \in \N,
\end{equation*}
which along with \eqref{tran_usc} yields,

\be\label{usc_tran2}
u_{\alpha_n}(r,\tau - 1, \theta_{-\tau}\omega,\xi_n) \to \phi \quad \text{in} \quad L^2(\o_k) \quad \text{for all} \quad k \in \N.
\ee
Since $\xi_n \in \cala_{\alpha_n}(\tau-1,\theta_{-1}\omega)$, by the invariance of attractor, we have $u_{\alpha_n}(r,\tau-1,\theta_{-\tau}\omega,\xi_n)\in \cala_{\alpha_n}(r,\theta_{r-\tau})$. This together with \eqref{usc_tran2} and Lemma \ref{usc_tails} implies

\be\label{usc_tran3}
u_{\alpha_n}(r,\tau - 1, \theta_{-\tau}\omega,\xi_n) \to \phi \quad \text{in} \quad \ltwo.
\ee
By \eqref{usc_tran3} and Lemma \ref{usc_conv} we get

\begin{equation*}
u_{\alpha_n}(\tau,\tau - 1, \theta_{-\tau}\omega,\xi_n) = u_{\alpha_n}(\tau,r, \theta_{-\tau}\omega,u_{\alpha_n}(r,\tau - 1, \theta_{-\tau}\omega,\xi_n)) \to u(\tau,r, \theta_{-\tau}\omega,\phi)
\end{equation*}
in $\ltwo$, which along with \eqref{def_usc} completes the proof.

\end{proof}

\begin{lemma}
If \eqref{f11}-\eqref{f31}, \eqref{f71} and \eqref{g11} hold, then for every $\tau \in \R$, and $\omega \in \Omega$,

\begin{equation*}
\lim_{\alpha \to 0}\text{dist}_{\ltwo}(\cala_\alpha(\tau,\omega),\cala_0(\tau,\omega))=0
\end{equation*}
\end{lemma}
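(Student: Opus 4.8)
The plan is to apply the classical three–ingredient criterion for upper semicontinuity of random attractors, all ingredients having been prepared above: (i) the convergence of the random cocycles $\phi_\alpha$ to the deterministic cocycle $\phi_0$ as $\alpha\to0$ given by Lemma~\ref{usc_conv}; (ii) a family $B$ that absorbs and is bounded uniformly in $\alpha\in(0,1]$, namely the set of \eqref{BDef} together with the inclusion $\bigcup_{0<\alpha\le1}\mathcal{A}_\alpha(\tau,\omega)\subseteq B(\tau,\omega)$ from \eqref{Union_Attractors}; and (iii) the uniform asymptotic compactness along vanishing noise intensity, namely Lemma~\ref{final_usc}. Fix $\tau\in\R$ and $\omega\in\Omega$ and argue by contradiction: if the claim fails there are $\delta>0$, a sequence $\alpha_n\to0$ and $u_n\in\mathcal{A}_{\alpha_n}(\tau,\omega)$ with $\text{dist}_{\ltwo}(u_n,\mathcal{A}_0(\tau,\omega))\ge\delta$ for all $n$; by Lemma~\ref{final_usc} a subsequence (not relabelled) satisfies $u_n\to\bar u$ in $\ltwo$, and it then suffices to prove $\bar u\in\mathcal{A}_0(\tau,\omega)$, which is the desired contradiction.

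The second step is to identify $\bar u$ via invariance and pullback. Fixing $t>0$, invariance of $\mathcal{A}_{\alpha_n}$ gives $\zeta_n\in\mathcal{A}_{\alpha_n}(\tau-t,\theta_{-t}\omega)$ with $u_n=u_{\alpha_n}(\tau,\tau-t,\theta_{-\tau}\omega,\zeta_n)$. By \eqref{Union_Attractors} the sequence $\{\zeta_n\}$ lies in the fixed bounded set $B(\tau-t,\theta_{-t}\omega)$, so Lemma~\ref{final_usc} applied at $(\tau-t,\theta_{-t}\omega)$ yields a further subsequence with $\zeta_n\to\zeta_t$ in $\ltwo$ and $\zeta_t\in\overline{B(\tau-t,\theta_{-t}\omega)}$. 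I then invoke Lemma~\ref{usc_conv} with initial time $\tau-t$, sample point $\theta_{-\tau}\omega$, final time $\tau$, and initial data $\zeta_n,\zeta_t$: for each $\epsilon\in(0,1)$ and all large $n$ one has $\norm{u_{\alpha_n}(\tau,\tau-t,\theta_{-\tau}\omega,\zeta_n)-u(\tau,\tau-t,\zeta_t)}^2\le c\norm{\zeta_n-\zeta_t}^2+c\epsilon(1+\norm{\zeta_n}^2+\norm{\zeta_t}^2)$. Letting $n\to\infty$ (using $\norm{\zeta_n}\le\norm{B(\tau-t,\theta_{-t}\omega)}<\infty$) and then $\epsilon\to0$ shows $u_n=u_{\alpha_n}(\tau,\tau-t,\theta_{-\tau}\omega,\zeta_n)\to u(\tau,\tau-t,\zeta_t)=\phi_0(t,\tau-t,\zeta_t)$ in $\ltwo$; since also $u_n\to\bar u$, we obtain $\bar u=\phi_0(t,\tau-t,\zeta_t)$ with $\zeta_t\in\overline{B(\tau-t,\theta_{-t}\omega)}$, and this holds for every $t>0$.

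The last step is to place $\bar u$ inside $\mathcal{A}_0(\tau,\omega)$. Define the deterministic family $D_0$ by $D_0(\sigma)=\overline{B(\sigma,\theta_{\sigma-\tau}\omega)}$ for $\sigma\le\tau$ (completed arbitrarily by bounded nonempty sets for $\sigma>\tau$). Passing to the limit $\alpha\to0$ in the exponential prefactor of \eqref{above_B}, whose threshold time $T$ is uniform in $\alpha\in(0,1]$, gives $e^{-\frac54\lambda t}\norm{B(\tau-t,\theta_{-t}\omega)}^2\to0$ as $t\to\infty$, i.e. $\lim_{s\to-\infty}e^{\frac54\lambda s}\norm{D_0(\tau+s)}^2=0$, so $D_0\in\mathcal{D}_0$. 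Since $\mathcal{A}_0$ is the $\mathcal{D}_0$-pullback attractor of $\phi_0$, $\text{dist}_{\ltwo}(\phi_0(t,\tau-t,D_0(\tau-t)),\mathcal{A}_0(\tau,\omega))\to0$ as $t\to\infty$; and because $\bar u=\phi_0(t,\tau-t,\zeta_t)\in\phi_0(t,\tau-t,D_0(\tau-t))$ for each $t$, we conclude $\text{dist}_{\ltwo}(\bar u,\mathcal{A}_0(\tau,\omega))=0$, hence $\bar u\in\mathcal{A}_0(\tau,\omega)$ since that set is closed. This contradicts $\text{dist}_{\ltwo}(u_n,\mathcal{A}_0(\tau,\omega))\ge\delta$ and proves the assertion.

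The hard part will be the second step, where the noise intensity $\alpha_n\to0$ and the pulled-back initial data $\zeta_n\to\zeta_t$ must be driven to their limits at the same time, in the presence of the $p$-Laplacian and of the superlinear nonlinearity $f$. This is precisely what the quantitative estimate of Lemma~\ref{usc_conv} is designed for: its right-hand side $c\norm{\zeta_n-\zeta_t}^2+c\epsilon(1+\norm{\zeta_n}^2+\norm{\zeta_t}^2)$ remains controllable under varying initial data because the uniform $\ltwo$-bound on $B$ keeps the $\epsilon$-term uniformly small. A minor secondary point is the verification $D_0\in\mathcal{D}_0$, which rests on the $\alpha$-uniformity of the threshold time in \eqref{above_B}.
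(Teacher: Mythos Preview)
Your argument is correct. The paper's own proof is a one-line citation of the abstract upper-semicontinuity criterion (Theorem~3.7 in \cite{wan6}) based on \eqref{d_lim_K}, Lemma~\ref{usc_conv} and Lemma~\ref{final_usc}; you instead unpack that criterion directly via a contradiction argument. The ingredients are the same---convergence of dynamics (Lemma~\ref{usc_conv}), precompactness of $\bigcup_\alpha\cala(\tau,\omega)$ (Lemma~\ref{final_usc}), and a uniform absorbing bound---though you route the last piece through \eqref{Union_Attractors} and \eqref{above_B} to build a $\cald_0$-family containing all the pulled-back points, whereas the paper invokes \eqref{d_lim_K} about the absorbing radii $K_\alpha$. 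Your route has the small advantage of being self-contained. Two cosmetic points: in applying Lemma~\ref{usc_conv} keep in mind its initial datum is $v_{\alpha,\tau}=e^{-\alpha z}\,u_{\alpha,\tau}$ rather than $u_{\alpha,\tau}$ itself (harmless here since $\alpha_n\to0$), and in Step~2 the subsequence may depend on $t$, but since $\bar u$ is already fixed this causes no trouble.
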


\begin{proof}
This follow from Theorem 3.7 in \cite{wan6} directly based on \eqref{d_lim_K}, and Lemmas \ref{usc_conv} and \ref{final_usc}
\end{proof}

\chapter{Conclusion}
This thesis looked at two families of $p$-Laplace equations driven by stochastic and non-autonomous noise. Well-posedness of the equations was outlined briefly, allowing us to define a cocycle to model the dynamics. Estimates on the equations allowed us to define absorbing sets for these equations. Estimates on the tails of solutions to these equations were also used to show that these cocycles were pullback-asymptotically compact. These two properties immediately give us the existence and uniqueness of random pullback attractors, defined with respect to certain universes. The existence of these objects tells us that over long periods of time, the dynamics is confined to a compact subset of the phase space, and cannot grow arbitrarily largely.

In the multiplicative noise case, we also showed a structural property of the attractor. Specifically, we showed that the random pullback attractor for the stochastic equation approached that of the deterministic equation as $\alpha \to 0$ in an upper semicontinuous way. Roughly speaking, this means that the random attractor's dynamics was approximately contained within the deterministic one for very small $\alpha$. In other words, perturbing the equation with a small noise term does not cause the attractor to explode or bifurcate to a much larger attractor. Lower semicontinuity, that is, showing that the deterministic attractor is approximately contained in the random one for very small $\alpha$, is a much more difficult question in general, and requires more detailed knowledge of the asymptotic dynamics. The overall interpretation of having proven upper semicontinuity, but not lower semicontinuity, is that a small stochastic perturbation may cause the attractor to collapse or shrink in some sense, but it cannot become a much larger set. 

There are many related open questions that were not addressed in this thesis. One may be able use the monotone properties of the $p$-Laplace operator to bound the attractors in some sense. It would also be interesting to look at other types of attracting sets, such as inertial manifolds or exponential attractors, which give some information about how quickly solutions will be attracted to them. It would also be interesting to explore other types of dynamics, such as forward attractors, although there are some difficulties in studying the forward dynamics of non-autonomous and stochastic equations. There are of course questions of regularity of attractors, and other detailed structural problems associated with them.

%
%
%
%
\begin{References}
%
%
\bibliographystyle{plainnat}
\bibliography{thesis.bib}
%
%
\end{References}
\copyrightpage
\end{document}